% !TEX program = pdflatex
\title{On the $\alpha$-spectral radius of unicyclic and bicyclic graphs with a fixed  diameter}
\author{  $^1$, Haiying Shan$^2$\thanks{Corresponding author},  Yuyao Zhai\\[5pt]
{School of Mathematical Sciences, Tongji University, Shanghai, P. R. China} }

\author{
Feifei Wang\thanks{\footnotesize School of Mathematical Sciences, Tongji University, Shanghai 200092, China
(\texttt{1710854@tongji.edu.cn})}~,~~
Haiying Shan\thanks{\footnotesize School of Mathematical Sciences, Tongji University, Shanghai 200092, China
(\texttt{shan\_haiying@tongji.edu.cn})}~,~~
Yuyao Zhai\thanks{\footnotesize School of Mathematical Sciences, Tongji University, Shanghai 200092, China
(\texttt{1930909@tongji.edu.cn})}
}

\date{}

\documentclass{article}
\usepackage{amsthm}
\usepackage{amsmath}
\usepackage{amsfonts,amssymb,amscd}
\usepackage{mathrsfs,mathtools}
\usepackage{hyperref,latexsym}
\usepackage[figurename=Fig.]{caption}
\usepackage{indentfirst}
\usepackage{bm}
\usepackage{booktabs}
\usepackage[table,xcdraw]{xcolor}
\usepackage{multirow}
\usepackage{rotating,tabularx}
\usepackage{subfig}

\usepackage{adjustbox}
\usepackage{indentfirst}
\usepackage{enumerate,longtable, tabu}
\graphicspath{{./img/}{./figures}}
\usepackage{tikz}

\usetikzlibrary{calc}

\oddsidemargin  0pt     %   Left margin on odd-numbered pages.
\evensidemargin 0pt     %   Left margin on even-numbered pages.
\marginparwidth 40pt    %   Width of marginal notes.
\marginparsep 10pt      % Horizontal space between outer margin and
% marginal note
%\setlength{\parindent}{0pt}
\setlength{\parskip}{1ex plus 0.5ex minus 0.2ex} % extra space between paragraphs
% VERTICAL SPACING:
\topmargin 0pt           % Nominal distance from top of page to top of
%    box containing running head.
\headsep 10pt            %    Space between running head and text.

% DIMENSION OF TEXT:

\textheight 8.85in      %Height of text(including footnotes and figures,
% excluding running head and foot).
\textwidth 6.6in         % Width of text line.

\newcommand{\case}[2]{\par \noindent \textbf{Case #1.} #2\par}
\newcommand{\subcase}[2]{\par \noindent \textbf{Subase #1.} #2\par}

\newtheorem{thm}{Theorem}[section]
\newtheorem{lem}{Lemma}[section]
\newtheorem{definition}{Definition}[section]

\newtheorem{proof*}{proof}[section]
\newtheorem{cor}{Corollary}[section]
\newtheorem{con}{Conjecture}[section]
\newtheorem{pro}{Proposition}[section]

\DeclareRobustCommand\Inf{%
	\mathop{
		\kern-3pt\vcenter{\hbox{
			\tikz[anchor=base,  baseline=-3pt, xscale=0.8, yscale=1]{
				\draw[line width = 0.4pt] (-0.2em, 0em)--(0.2em, 0em);
					\fill[line width = 2pt] (-0.4em, 0.4em)--(0.4em, -0.4em)--(0.4em, 0.4em)--(-0.4em, -0.4em)--(-0.4em, 0.4em);
				}
		}}\kern-2pt}
}

\DeclareRobustCommand\BTr{%
	\mathop{
		\kern-3pt\vcenter{\hbox{
			\tikz[anchor=base,  baseline=-3pt, xscale=1.4, yscale=1.4]{
					\fill (90:0.4em)--(210:0.4em)--(-30:0.4em)--cycle;
				}
		}}\kern-2.5pt}
}

\DeclareRobustCommand\INF{%
	\kern-3pt\vcenter{\hbox{
			\tikz[anchor=base,  baseline=-3pt, xscale=0.8, yscale=0.8]{
				\draw[line width = 0.8pt] (-0.4em, 0.4em)--(0.4em, -0.4em)--(0.4em, 0.4em)--(-0.4em, -0.4em)--cycle;
			}
	}}\kern-2.5pt
}

% \DeclareRobustCommand\THETA{
% \boldsymbol{\ThetA}
% }
\DeclareRobustCommand\THETA{
	\kern-3pt\vcenter{\hbox{
		\tikz[anchor=base,  baseline=-3pt, scale=2.1]{
			\draw[line width = 1.2pt] (0em, 0.2em)--(-0.2em, 0em)--(0em, -0.2em)--(0.2em, 0em)--cycle;
			\draw[line width = 1.2pt] (-0.2em, 0em)--(0.2em, 0em);
		}
	}}\kern-2.5pt
}

\DeclareRobustCommand\ThetA{
	\kern-3pt\vcenter{\hbox{
		\tikz[anchor=base,  baseline=-3pt, scale=2.1]{
			\draw[line width = 0.6pt] (0em, 0.2em)--(-0.2em, 0em)--(0em, -0.2em)--(0.2em, 0em)--cycle;
			\draw[line width = 0.6pt] (-0.2em, 0em)--(0.2em, 0em);
		}
	}}\kern-2.5pt
}

\newcommand{\x}{{\ensuremath{ \mathbf{x}}}}

\newcommand{\diam}{\operatorname{diam}}

\begin{document}

\maketitle
\begin{abstract}
    The $\alpha$-spectral radius of  a connected graph $G$ is the spectral radius of $A_\alpha$-matrix of $G$.  In this paper,  we discuss the methods for comparing $\alpha$-spectral radius of graphs. As applications, we characterize the graphs with the maximal $\alpha$-spectral radius among all unicyclic and bicyclic graphs of order $n$ with diameter $d$, respectively. Finally,  we determine the unique graph with  maximal signless Laplacian spectral radius among bicyclic graphs of order $n$ with diameter $d$. From our conclusion, it is known that the result of Pai and Liu  in [On the signless Laplacian spectral radius of bicyclic graphs with fixed diameter. Ars Combinatoria 2017, 249–265] is wrong.

\noindent{\bfseries Keywords:} Unicyclic graph;  Bicyclic graph; Diameter ; alpha-spectral radius
\noindent{\bfseries AMS Classification:} 05C35;  05C50
\end{abstract}

\section{Introduction}
All graphs considered in this article are finite, undirected and simple. Let $G=(V(G),E(G))$ be a graph with $n$ vertices and $m$ edges (so $n=|V(G)|$ is its order and $m=|E(G)|$ is its size). Let $G$ be a graph with adjacency matrix $A(G)$ and  $D(G)$ be the diagonal matrix of its vertex degrees. In \cite{MR3648656} the matrix $A_{\alpha}(G)$ has been defined for any real $\alpha\in [0,1]$ as
$$A_{\alpha}(G)=\alpha D(G)+(1-\alpha)A(G),$$
which has recently attracted more and more researchers' attention. One reason for this is that the $\alpha$-spectrum seems to be more informative than other commonly used graph matrices.

Note that $A_0(G)=A(G)$ and $2A_{\frac{1}{2}}(G)=Q(G)$. Thus, the family $A_{\alpha}(G)$ extends both $A(G)$ and $Q(G)$.

For square matrix $A$, let $\phi(A,x)=\det (xI-A)$ denote the characteristic polynomial of $A$ and $\rho(A)$ be the  spectral radius of $A$. We write $\phi_{\alpha}(G)=\phi(A_{\alpha}(G), x)$ and $\rho_{\alpha}(G)=\rho(A_{\alpha}(G))$, where $\rho_{\alpha}(G)$ is called the $\alpha$-spectral radius of $G$.

The study of the largest $\alpha$-eigenvalue remains an attractive topic for researchers. In particular, the extremal values of the $\alpha$-spectral radius for various classes of graphs, and corresponding extremal graphs, have been investigated.

Let $G$ be a connected  graph with $n$ vertices and $m$ edges. 	If $m=n+c-1$, then $G$ is called a $c$-cyclic graph. Specially, if $c=0,1$, or $2$, then $G$ is called a tree, an unicyclic graph, a bicyclic graph. Let $G$ be a $c$-cyclic graph. The base of $G$,  denoted by $\widehat{G}$,  is the (unique)  minimal $c$-cyclic subgraph of $G$.  It is easy to see that $\widehat{G}$ can be obtained from $G$ by consecutively deleting pendent edges.

Let $N_G(v)$ denote the neighbor set of vertex $v$ in $G$, then $d_G(v)=|N_G(v)|$ is called the degree of $v$ of $G$. If there is no confusion, we write $N_G(v)$ as $N(v)$, and $d_G(v)=d(v)=d_v$.

The diameter of $G$, denoted $\diam(G)$, is the maximal distance between any two vertices in the graph.

It is an interesting problem concerning graphs with maximal or minimal spectral radii over a given class of graphs. As early as $1985$, Brualdi and Hoffman \cite{MR774347} investigated the maximal spectral radius of the adjacency matrix of a graph in the set of all graphs with a given number of vertices and edges. Their work was followed by other people, in the connected graph case as well as in the general case. In \cite{MR2404214}, Hansen et al. determined graphs with the largest spectral radius among all the graphs on $n$ vertices with given diameter $d$ or radius $r$. Several types of special graphs with diameter $d$ were also discussed. For example, the spectral radii and signless Laplacian spectral radii of trees, unicyclic graphs, bicyclic graphs and tricyclic graphs on $n$ vertices with fixed diameter were discussed in \cite{MR2298999, MR2213058, MR3643499,MR2278222, MR2859926, MR2769349}. Nikiforov \cite{MR3786248} determined the graph with the largest $\alpha$-spectral radius among all graphs with $n$ vertices and diameter at least $d$.

In this paper, motivated by the above results, we will determine the extremal graph with maximal $\alpha$-spectral radius among  all unicyclic or bicyclic graphs with $n$ vertices and diameter $d$, respectively.

\section{Notations and Preliminaries}
Let $G$ be a connected graph with $V(G)=\{v_1,\dots,v_n\}$. A column vector $\boldsymbol{x}=(x_{v_1}, \dots, x_{v_n})^T\in \mathbb{R}^n$ can be considered as a function defined on $V(G)$ which maps vertex $v_i$ to $x_{v_i}$, i.e., $\x(v_i)=x_{v_i}$ for $i=1,\dots,n$. Then $$\boldsymbol{x}^TA_{\alpha}(G)\boldsymbol{x}=\sum_{\{u,v\}\in E(G)}(\alpha(x_u^2+x_v^2)+2(1-\alpha)x_ux_v).$$

If $G$ is connected then $A_{\alpha}(G)$ is irreducible, and by the Perron-Frobenius theory of non-negative matrices, $\rho_{\alpha}(G)$ has multiplicity one and there exists a unique positive unit eigenvector corresponding to $\rho_{\alpha}(G)$. We shall refer to such an eigenvector as the $\alpha$-Perron vector of $G$. If $\boldsymbol{x}$ is the $\alpha$-Perron vector of $G$, then for each $u\in V(G)$,
$$\rho_{\alpha}(G)x_u=\alpha d(u)x_u+(1-\alpha)\sum_{\{u,v\}\in E(G)}x_v.$$
For a unit column vector $\boldsymbol{x}\in \mathbb{R}^n$ with at least one nonnegative entry, by Rayleigh quotient's principle, we have $\rho_{\alpha}(G)\ge \boldsymbol{x}^TA_{\alpha}(G)\boldsymbol{x}$ with equality holds if and only if $\boldsymbol{x}$ is the
$\alpha$-Perron vector of $G$.

\begin{definition}\cite{belardoCombinatorialApproachComputing2010a}
	For any $n \times n$  matrix $A=\left(a_{i j}\right)$,  the Coates digraph associated to $A$, denoted by $G_{A}$,  is a weighted digraph defined as follows:
	\begin{itemize}
		\item the vertex set of $G_{A}$ is equal to $\{1,2, \ldots, n\}$, where the ith vertex  corresponds to the ith row (or equivalently, to the ith column) of $A$;
		\item the arc set of $G_{A}$ consists of all arcs of the form $uv$ with weight
			  $a_{uv}(1 \leqslant u, v \leqslant n) ;$ for $u=v$ the corresponding arc is a loop. If the weight of some arc is zero, then it is ignored.
	\end{itemize}	
\end{definition}

For a Coates graph $G=G_A$, we call the matrix $A$, denoted by $A(G)$,  the  weighted adjacency matrix of $G$.
Let $U$ be a vertex subset of graph $G$ and  $G_\alpha$ be the Coates graph with respect to matrix $A_\alpha(G)$. We write $\psi_{\alpha}(G,U)=\phi(G_\alpha-U,x)$. The spectral radius of $A(G_\alpha-U)$ is denoted by $\rho(G_\alpha-U)$.

In the following lemma, we will list some results on nonnegative matrices and graphs, which  follow directly from
Perron-Frobenius theorem (see \cite{MR1298430}).
\begin{lem}\cite{ MR3648656, MR1298430, MR3786249}\label{lemc2} Let $G$ be a  connected graph. Some basic results on the spectrum are the following:
	\begin{enumerate}[(i).]
		\item   Let $A$ and $B$ be two nonnegative matrices with $A \geq B$ and $A \neq B$. If  there exists a principle submatrix $M$ of $A$ such that $B \leq M$, then $\rho(B)\leq \rho(A)$ and the inequality is strict when $A$ irreducible.
		\item Let $1\ge \alpha \ge \beta \ge 0$. If $G$ is a graph of order $n$ with $A_{\alpha}(G)=A_{\alpha}$ and $A_{\beta}(G)=A_{\beta}$, then $\lambda_k(A_{\alpha})-\lambda_k(A_{\beta})\ge 0$ for any $k\in [n]$.
		\item Let $G$ be a connected graph. If $G$ is not a tree and $G\ncong C_n$, then $\rho_{\alpha}(G)>2$.
	\end{enumerate}
\end{lem}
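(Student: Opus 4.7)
The plan is to handle the three items separately, since each reduces to a distinct standard tool from Perron--Frobenius theory or spectral perturbation theory.

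For part (i), I would chain two classical facts. First, whenever $M$ is a principal submatrix of a nonnegative matrix $A$, $\rho(M)\le \rho(A)$; this follows by zero-padding the Perron vector of $M$ to the full index set and applying the Collatz--Wielandt characterization of $\rho(A)$. Second, for nonnegative matrices of the same order with $0\le B\le M$, monotonicity of the Perron eigenvalue gives $\rho(B)\le \rho(M)$. Composing yields $\rho(B)\le \rho(A)$. For strictness when $A$ is irreducible, let $\x>0$ be the Perron vector of $A$ and let $B'$ be the zero-padding of $B$ to the index set of $A$; since $A\ge B'$ with $A\ne B'$, strict positivity of $\x$ forces $(A-B')\x$ to have at least one strictly positive component, and pairing this with any nonnegative left Perron vector of $B'$ gives $\rho(A)>\rho(B')=\rho(B)$.

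For part (ii), I would exploit the identity
\begin{equation*}
A_\alpha - A_\beta \;=\; (\alpha-\beta)D + (\beta-\alpha)A \;=\; (\alpha-\beta)L,
\end{equation*}
where $L=D-A$ is the combinatorial Laplacian of $G$ and is positive semidefinite. Thus $A_\alpha = A_\beta + (\alpha-\beta)L$ with $(\alpha-\beta)L\succeq 0$, so Weyl's eigenvalue inequality for symmetric matrices immediately yields $\lambda_k(A_\alpha)\ge \lambda_k(A_\beta)$ for every $k\in[n]$.

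For part (iii), I would first reduce to the adjacency case: by part (ii), $\rho_\alpha(G)\ge \rho_0(G)=\rho(A(G))$, so it suffices to prove $\rho(G)>2$ whenever the connected graph $G$ is neither a tree nor a cycle. Because $G$ is not a tree it contains some cycle $C_k$, and because $G\not\cong C_n$ for any $n$ there is at least one edge of $G$ not lying in this $C_k$. Let $M$ be the principal submatrix of $A(G)$ indexed by $V(C_k)$, so $A(C_k)\le M$; applying part (i) with $A=A(G)$ (irreducible because $G$ is connected) and $B=A(C_k)$ gives $\rho(G)>\rho(C_k)=2$. The main delicacy, which is where I would concentrate care in a full write-up, is verifying that the extra edge of $G$ really produces a strict inequality at the matrix level — this is precisely why I compare against the cycle $C_k$ rather than against the induced subgraph $G[V(C_k)]$, so that the zero-padding of $A(C_k)$ is strictly dominated by $A(G)$. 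As an alternative for this last step, one could cite Smith's classification of connected graphs with spectral radius exactly $2$, whose non-tree members are precisely the cycles.
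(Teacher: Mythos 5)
The paper itself gives no proof of this lemma: it is quoted from the cited references with the remark that the items follow from Perron--Frobenius theory, so any complete argument you give is necessarily "extra". Your proofs of (ii) and (iii) are correct and are essentially the standard ones from those references: writing $A_\alpha-A_\beta=(\alpha-\beta)L$ with $L\succeq 0$ and invoking Weyl is exactly Nikiforov's argument, and reducing (iii) to the adjacency case via (ii) and then comparing $A(G)$ with the zero-padding of $A(C_k)$ for a cycle subgraph (rather than with an induced subgraph) is the usual route; the appeal to Smith's classification is a legitimate alternative for the last step.

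The one soft spot is the strictness claim in (i). Pairing a nonnegative \emph{left} Perron vector $y$ of the zero-padded matrix $B'$ with the positive Perron vector $\mathbf{x}$ of $A$ only yields $\rho(A)\,y^T\mathbf{x}=\rho(B')\,y^T\mathbf{x}+y^T(A-B')\mathbf{x}\ge\rho(B')\,y^T\mathbf{x}$; to conclude a \emph{strict} inequality you need $y$ to be positive on some coordinate where $(A-B')\mathbf{x}$ is positive, and this is not automatic, because $B'$ is in general reducible (it has zero rows outside the index set of $M$) and its nonnegative left Perron vector may vanish precisely there. The standard repair is to argue by contradiction with a \emph{right} Perron vector $u\ge 0$ of $B'$: if $\rho(B')=\rho(A)$, then $Au\ge B'u=\rho(A)u$, and the subinvariance property of the irreducible matrix $A$ (if $Au\ge\rho(A)u$ with $u\ge 0$, $u\ne 0$, then $Au=\rho(A)u$ and $u>0$) forces $(A-B')u=0$ with $u>0$, hence $A=B'$, contradicting $A\ne B'$. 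Equivalently you may simply cite the classical strict monotonicity theorem for irreducible nonnegative matrices (Berman--Plemmons, or Horn--Johnson Theorem 8.4.5). A minor cosmetic point: as stated the lemma's hypothesis $A\ge B$ presumes equal sizes, so you should say explicitly that you verify it for the zero-padding $B'$; this is also how the paper later applies the lemma to principal submatrices of different orders.
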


\begin{lem} \cite{2021arXiv210503077S}\label{lemz2}
	Let $A$ and $B$ be two nonnegative matrices of order $n$ with $A \geq B$ and $A \neq B$. Then the following holds:
	\begin{center}
		$\phi(B,x) \geq \phi(A,x)$ for $x \geq \rho(A)$,
	\end{center}
	especially, when $A$ is irreducible, the inequality is strict.
\end{lem}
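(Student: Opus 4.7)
The plan is to interpolate between $A$ and $B$ and differentiate the characteristic polynomial along the path, then handle the boundary $x = \rho(A)$ by continuity. Set $C := A - B$, so $C \geq 0$ and $C \neq 0$, and define $M(t) := A - tC = (1-t)A + tB$ for $t \in [0,1]$. Then $M(t)$ is a convex combination of two nonnegative matrices, so $M(t) \geq 0$; moreover $M(t) \leq A$ forces $\rho(M(t)) \leq \rho(A)$ by Perron--Frobenius.

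For any fixed $x > \rho(A)$, Jacobi's formula gives
\[
\frac{d}{dt}\phi(M(t),x) \;=\; \operatorname{tr}\bigl(\operatorname{adj}(xI - M(t))\cdot C\bigr).
\]
Since $x > \rho(M(t))$, the matrix $xI - M(t)$ is a nonsingular $M$-matrix with positive determinant and nonnegative inverse (via the Neumann series $\sum_{k\ge 0} M(t)^k/x^{k+1}$). Hence $\operatorname{adj}(xI - M(t)) = \det(xI - M(t))\cdot(xI - M(t))^{-1}$ is nonnegative, and its trace against $C \geq 0$ is nonnegative. Therefore $t \mapsto \phi(M(t),x)$ is nondecreasing on $[0,1]$, which yields $\phi(B,x) \geq \phi(A,x)$ on $(\rho(A),\infty)$; the same inequality at $x = \rho(A)$ then follows by continuity of the polynomials $\phi(A,\cdot)$ and $\phi(B,\cdot)$.

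For the strict version when $A$ is irreducible, note that $M(t)$ retains the nonzero pattern of $(1-t)A$ for every $t \in [0,1)$, so $M(t)$ is irreducible on that range. By the Neumann expansion, irreducibility of $M(t)$ makes every entry of $(xI - M(t))^{-1}$ strictly positive (some power of $M(t)$ is positive in each coordinate), so $\operatorname{adj}(xI - M(t))$ is entrywise positive. Combined with $C \neq 0$, the trace in Jacobi's formula is strictly positive on $[0,1)$, and strict monotonicity of $\phi(M(\cdot),x)$ yields $\phi(B,x) > \phi(A,x)$ for every $x > \rho(A)$. At the single boundary point $x = \rho(A)$, the classical strict Perron--Frobenius monotonicity ($A$ irreducible, $0 \leq B \leq A$, $B \neq A$ implies $\rho(B) < \rho(A)$) gives $\phi(B,\rho(A)) > 0 = \phi(A,\rho(A))$. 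The main delicacy is exactly this boundary point, where Jacobi's formula would otherwise have to be invoked at a singular matrix; routing around it via the open-ray argument plus a continuity (respectively Perron--Frobenius) step at $x = \rho(A)$ seems cleanest.
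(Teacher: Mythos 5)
Your proof is correct, and there is nothing in the paper to compare it with line by line: the lemma is stated with a citation to an external source and is not proved here. Your interpolation argument is sound as a self-contained proof: with $M(t)=(1-t)A+tB$ you have $0\le M(t)\le A$, hence $\rho(M(t))\le\rho(A)<x$, so the Neumann series gives $(xI-M(t))^{-1}\ge 0$ and $\det(xI-M(t))>0$, making $\operatorname{adj}(xI-M(t))\ge 0$; Jacobi's formula then shows $t\mapsto\phi(M(t),x)$ is nondecreasing, and the two delicate points are handled properly — the boundary $x=\rho(A)$ by continuity in $x$, and the endpoint $t=1$ (where irreducibility of $M(t)$ could fail) by noting strict positivity of the derivative on $[0,1)$ suffices, while at $x=\rho(A)$ the strict claim follows from the classical strict monotonicity $\rho(B)<\rho(A)$, which gives $\phi(B,\rho(A))>0=\phi(A,\rho(A))$. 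One small wording point: $M(t)$ need not \emph{retain} exactly the zero pattern of $A$, but for $t<1$ it dominates $(1-t)A$ entrywise, so its digraph contains that of $A$, which is all irreducibility requires. By contrast, proofs of this kind of statement in the literature (and in the combinatorial spirit of the cited source and of the paper's own Coates-digraph machinery) typically proceed by expanding the characteristic polynomial into principal minors or weighted walk/cycle sums so that each coefficient of $\phi(B,x)-\phi(A,x)$ is visibly a nonnegative combination, or by induction on $n$ via the derivative formula; your analytic route through Jacobi's formula and $M$-matrix positivity is shorter and delivers the strict inequality with essentially no extra work, at the cost of invoking slightly heavier tools.
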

\begin{pro}\cite{MR3786248}\label{proN1}
	Let $\alpha\in [0,1)$ and  $G$ be a graph with  $\rho_{\alpha}(G)>2$. Take  $P=u_1u_2\dots u_{r}(r\ge 2)$ be a pendent path in $G$ with root $u_1$ and  $\x$ be the
	$\alpha$-Perron vector of $G$. Then $\x(u_i)>\x(u_j)$ for $1\leq i< j \leq r$.
\end{pro}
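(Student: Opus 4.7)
The plan is to extract a linear recurrence for the entries of the Perron vector along the pendant path $P$ by writing down the $A_\alpha$-eigenvalue equation at each vertex $u_2,\dots,u_r$, and then to run a downward induction on $i$ from $r$ to $2$. Set $\rho=\rho_\alpha(G)$ and $x_i=\x(u_i)$; since $G$ is connected, the Perron vector is strictly positive, so every $x_i>0$. By the definition of a pendant path rooted at $u_1$, one has $d_G(u_r)=1$ and $d_G(u_i)=2$ for $2\le i\le r-1$ with neighbors $u_{i-1},u_{i+1}$, so the relevant eigenvalue equations read
\[ (\rho-\alpha)x_r=(1-\alpha)x_{r-1}, \qquad (\rho-2\alpha)x_i=(1-\alpha)(x_{i-1}+x_{i+1}) \quad (2\le i\le r-1). \]

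The first equation immediately yields $x_{r-1}/x_r=(\rho-\alpha)/(1-\alpha)$, which exceeds $1$ since this inequality is equivalent to $\rho>1$ and the denominator is positive by $\alpha<1$; this gives the base case $x_{r-1}>x_r$. The second equation rearranges to the recurrence
\[ x_{i-1}=c\,x_i-x_{i+1}, \qquad c=\frac{\rho-2\alpha}{1-\alpha}, \]
and the key quantitative input is $c>2$, which is literally equivalent to the hypothesis $\rho>2$.

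The inductive step is now routine. Assuming $x_i>x_{i+1}$ for some $i$ with $2\le i\le r-1$, the recurrence gives
\[ x_{i-1}-x_i=(c-1)x_i-x_{i+1}>x_i-x_{i+1}>0, \]
where the first strict inequality uses $c-1>1$ together with $x_i>0$. Downward induction on $i$ from $r$ to $2$ therefore yields $x_1>x_2>\cdots>x_r$, and transitivity gives $\x(u_i)>\x(u_j)$ whenever $1\le i<j\le r$.

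There is no substantive obstacle here. The only delicate points are that $\rho_\alpha(G)>2$ is used in an essential way — precisely to guarantee $c>2$, which is what powers the inductive step — and that $\alpha<1$ is required so that $1-\alpha>0$ and the recurrence is well-defined. Notably the argument invokes no information about $G$ outside of the path $P$ itself; in particular, the eigenvalue equation at the root $u_1$ is never needed, so the proof works uniformly regardless of what lies beyond $u_1$.
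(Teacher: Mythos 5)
Your argument is correct and self-contained: the paper itself only cites this proposition from \cite{MR3786248} without reproducing a proof, and your recurrence argument (eigen-equation at the pendant end giving $x_{r-1}/x_r=(\rho-\alpha)/(1-\alpha)>1$, then $x_{i-1}=cx_i-x_{i+1}$ with $c=(\rho-2\alpha)/(1-\alpha)>2$ exactly encoding $\rho_\alpha(G)>2$, and downward induction) is the standard one used for such pendant-path monotonicity results. The edge case $r=2$ is covered by your base case alone, and you correctly avoid needing the eigen-equation at the root $u_1$, so nothing is missing.
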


\begin{lem}\cite{MR3786249}\label{lem8}
	Let $G$ be a connected graph with $\alpha\in [0,1)$. For $u,v\in V(G)$, suppose $N\subseteq N(v)\backslash (N(u)\cup \{u\})$. Let $G'=G-\{vw:w\in N\}+\{uw:w\in N\}$. If $N\neq \varnothing$ and $\boldsymbol{x}=(x_1,\dots, x_n)$ is the $\alpha$-Perron vector of $G$ such that $x_u\ge x_v$, then $\rho_{\alpha}(G')>\rho_{\alpha}(G)$.
\end{lem}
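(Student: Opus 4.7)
The plan is to apply the Rayleigh quotient principle to the $\alpha$-Perron vector $\mathbf{x}$ of $G$, which we may assume is of unit length. The first step is to use the edge-wise formula
\[
\mathbf{x}^T A_\alpha \mathbf{x} = \sum_{\{u,v\} \in E}\bigl[\alpha(x_u^2 + x_v^2) + 2(1-\alpha) x_u x_v\bigr]
\]
stated at the beginning of Section~2 to compute the difference of quadratic forms. Since $G'$ is obtained from $G$ by deleting the edges $\{v,w\}$ with $w \in N$ and inserting the edges $\{u,w\}$ with $w \in N$ (and $G'$ is simple, because the hypothesis $N \subseteq N(v) \setminus (N(u) \cup \{u\})$ guarantees no multi-edges or loops are created), all common terms cancel and I expect a telescoping of the form
\[
\mathbf{x}^T A_\alpha(G') \mathbf{x} - \mathbf{x}^T A_\alpha(G)\mathbf{x} = (x_u - x_v) \sum_{w \in N} \bigl[\alpha(x_u + x_v) + 2(1-\alpha) x_w\bigr].
\]
Because $\mathbf{x}$ is strictly positive (Perron--Frobenius) and $N \neq \varnothing$, the bracketed sum is strictly positive, so under the hypothesis $x_u \geq x_v$ this entire expression is non-negative.

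The second step is to split into two cases. In the generic case $x_u > x_v$ the display above is strictly positive, and so $\mathbf{x}^T A_\alpha(G')\mathbf{x} > \mathbf{x}^T A_\alpha(G)\mathbf{x} = \rho_\alpha(G)$; combining with the Rayleigh bound $\rho_\alpha(G') \geq \mathbf{x}^T A_\alpha(G')\mathbf{x}$ produces the conclusion immediately.

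The genuine obstacle is the borderline case $x_u = x_v$, where the Rayleigh calculation only yields $\rho_\alpha(G') \geq \rho_\alpha(G)$ and equality is a priori possible. I would rule this out by contradiction: if $\rho_\alpha(G') = \rho_\alpha(G)$, then equality in the Rayleigh principle forces $\mathbf{x}$ to also be the $\alpha$-Perron vector of $G'$, giving the identity $A_\alpha(G')\mathbf{x} = A_\alpha(G)\mathbf{x}$. Reading off the $u$-th coordinate of $\bigl(A_\alpha(G') - A_\alpha(G)\bigr)\mathbf{x}$ produces
\[
\alpha |N|\, x_u + (1-\alpha) \sum_{w \in N} x_w,
\]
which is strictly positive since $\mathbf{x} > 0$ and $N \neq \varnothing$; this contradicts the required vanishing of that coordinate. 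Hence $\rho_\alpha(G') > \rho_\alpha(G)$ in this case as well, completing the proof.
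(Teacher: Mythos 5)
Your argument is correct and is essentially the standard proof: the paper itself does not prove this lemma but cites it from \cite{MR3786249}, and the Rayleigh-quotient telescoping identity together with the separate treatment of the borderline case $x_u=x_v$ is exactly the argument used there. One small wording caution: $G'$ need not be connected (e.g.\ $v$ may become isolated), so in the equality case you should say that $\rho_\alpha(G')=\mathbf{x}^TA_\alpha(G')\mathbf{x}$ forces $\mathbf{x}$ to be an eigenvector of the symmetric matrix $A_\alpha(G')$ for its largest eigenvalue, rather than ``the $\alpha$-Perron vector of $G'$''; with that rephrasing your coordinate-wise contradiction at $u$ goes through verbatim.
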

By Lemma \ref{lem8}, we can easily obtain  the following results.
\begin{lem}\cite{MR3786249}\label{lem6}
	Let $\alpha\in [0,1)$, and $G$ be a connected graph and $e=uv$ be a cut-edge of $G$. Let  $G'$ be the graph obtained from $G$ by deleting the edge $uv$, identifying $u$ with $v$, and adding a pendant edge to $u(=v)$. Then $\rho_{\alpha}(G')>\rho_{\alpha}(G)$.
\end{lem}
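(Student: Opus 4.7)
The plan is to derive Lemma \ref{lem6} as a direct corollary of Lemma \ref{lem8}, by migrating every neighbor of $v$ other than $u$ across the cut-edge to $u$. We first discard the trivial cases in which one of $u,v$ has degree $1$: if, say, $d_G(v)=1$, then $v$ is already a pendant attached to $u$ and $G'\cong G$, so there is nothing to prove. Hence we may assume $d_G(u),d_G(v)\ge 2$, and after relabeling the two endpoints of the cut-edge we may also assume $x_u\ge x_v$, where $\boldsymbol{x}$ is the $\alpha$-Perron vector of $G$. Put $N:=N_G(v)\setminus\{u\}$; then $N\ne\varnothing$.

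We then plan to apply Lemma \ref{lem8} with this $N$. The one nontrivial input is the inclusion $N\subseteq N_G(v)\setminus(N_G(u)\cup\{u\})$, and this is precisely where the cut-edge hypothesis enters: if some $w\in N$ also belonged to $N_G(u)$, then $u-w-v$ would be a $u,v$-path in $G-uv$, contradicting that $uv$ is a cut-edge. Lemma \ref{lem8} then yields $\rho_\alpha(G^\ast)>\rho_\alpha(G)$ for
$$G^\ast := G-\{vw:w\in N\}+\{uw:w\in N\}.$$

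In $G^\ast$ the vertex $v$ retains only $u$ as a neighbor and so becomes a pendant, while every former neighbor of $v$ is now attached to $u$. This is exactly the graph produced by deleting $uv$, identifying $u$ with $v$, and attaching a new pendant at the merged vertex, with $v$ itself serving as that pendant. Therefore $G^\ast\cong G'$ and $\rho_\alpha(G')>\rho_\alpha(G)$, as required.

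I do not foresee a genuine obstacle: the cut-edge property contributes a single line of reasoning (the disjointness $N\cap N_G(u)=\varnothing$), and everything else is bookkeeping and one invocation of Lemma \ref{lem8}. The only judgement call is the handling of the degenerate pendant case, which is best dispensed with at the outset as above.
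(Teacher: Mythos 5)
Your argument is correct and matches the route the paper itself indicates: Lemma~\ref{lem6} is quoted from \cite{MR3786249} precisely as an easy consequence of Lemma~\ref{lem8}, obtained by shifting all neighbors of $v$ other than $u$ to $u$ (the cut-edge hypothesis giving $N\cap N_G(u)=\varnothing$), which is exactly what you do. The only caveat is the degenerate case $d_G(v)=1$, where $G'\cong G$ and the strict inequality cannot hold as literally stated; the lemma implicitly assumes both components of $G-uv$ are nontrivial, and your dismissal of this case is the right reading.
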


Let $G$ be a graph.  Assume that $e_1=uv, e_2=wy$, $e'_1=uw,e'_2=vy$ and $e_1,e_2\in E(G), e'_1,e_2 \notin E(G)$.  Take $G'=G-\{e_1,e_2\}+\{e'_1,e'_2\}$. We say that $G'$ is obtained from $G$ by 2-switching operation $e_1 \xrightleftharpoons[w]{\;v} e_2$.

\begin{lem}\cite{MR3988716}\label{lema1}
	Let $G, G'$ be the graphs defined as above and $\boldsymbol{x}$ be the $\alpha$-Perron vector of $G$ for $\alpha\in [0,1)$. If $x_u\ge x_y$ and $x_w\ge x_v$, then $\rho_{\alpha}(G')\ge \rho_{\alpha}(G)$. Furthermore, if one of two inequalities is strict, then $\rho_{\alpha}(G')> \rho_{\alpha}(G)$.
\end{lem}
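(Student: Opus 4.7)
The plan is to exploit the fact that the 2-switching $e_1 \xrightleftharpoons[w]{\;v} e_2$ preserves the degree of every vertex: each of $u,v,w,y$ exchanges exactly one neighbour, while every vertex outside $\{u,v,w,y\}$ is untouched. Consequently $\alpha D(G)=\alpha D(G')$ and
\[
A_\alpha(G')-A_\alpha(G)=(1-\alpha)\bigl(A(G')-A(G)\bigr),
\]
whose only nonzero entries sit at the positions corresponding to $uv,wy,uw,vy$. I would then evaluate the quadratic form on the $\alpha$-Perron vector $\boldsymbol{x}$ of $G$ and compute, by direct expansion,
\[
\boldsymbol{x}^{T}A_\alpha(G')\boldsymbol{x}-\boldsymbol{x}^{T}A_\alpha(G)\boldsymbol{x}
=2(1-\alpha)\bigl(x_ux_w+x_vx_y-x_ux_v-x_wx_y\bigr)
=2(1-\alpha)(x_u-x_y)(x_w-x_v).
\]
Under the hypotheses $x_u\ge x_y$ and $x_w\ge x_v$, this difference is nonnegative, and by the Rayleigh quotient principle recorded in the preliminaries, $\rho_\alpha(G')\ge \boldsymbol{x}^{T}A_\alpha(G')\boldsymbol{x}\ge \boldsymbol{x}^{T}A_\alpha(G)\boldsymbol{x}=\rho_\alpha(G)$.

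The delicate part is the strict inequality when only one of the hypotheses is strict. If both are strict, $(x_u-x_y)(x_w-x_v)>0$ and the Rayleigh comparison already gives strict inequality. Suppose instead, without loss of generality, that $x_u>x_y$ and $x_w=x_v$; now the quadratic form difference is zero, so one only gets $\rho_\alpha(G')\ge \boldsymbol{x}^{T}A_\alpha(G')\boldsymbol{x}=\rho_\alpha(G)$. To upgrade to a strict inequality I would argue by contradiction: equality throughout would force $\boldsymbol{x}$ to be the $\alpha$-Perron vector of $G'$ as well, so $A_\alpha(G)\boldsymbol{x}=A_\alpha(G')\boldsymbol{x}=\rho_\alpha(G)\boldsymbol{x}$.

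Reading the $v$-th coordinate of $\bigl(A_\alpha(G)-A_\alpha(G')\bigr)\boldsymbol{x}=\boldsymbol{0}$ and using that the only neighbours swapped at $v$ are $u$ (lost) and $y$ (gained) yields $(1-\alpha)(x_u-x_y)=0$; since $\alpha<1$ this forces $x_u=x_y$, contradicting the strict hypothesis. The symmetric case $x_u=x_y$, $x_w>x_v$ is handled identically by reading the $u$-th coordinate (which forces $x_v=x_w$). Hence $\boldsymbol{x}$ cannot be the Perron vector of $G'$, so the Rayleigh inequality is strict, giving $\rho_\alpha(G')>\rho_\alpha(G)$. The main obstacle is precisely this mixed case: the quadratic form alone is not enough and one must invoke the eigenvector equation and the irreducibility already used to define the Perron vector.
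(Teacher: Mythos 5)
Your proof is correct: the degree-preservation observation, the identity $\boldsymbol{x}^{T}\bigl(A_\alpha(G')-A_\alpha(G)\bigr)\boldsymbol{x}=2(1-\alpha)(x_u-x_y)(x_w-x_v)$, and the upgrade to strictness via the eigen-equation at $v$ (resp.\ $u$) are exactly the standard argument for this switching lemma. The paper itself gives no proof --- Lemma \ref{lema1} is quoted from \cite{MR3988716} --- and your argument is essentially the one found there, so there is nothing to add beyond noting that in the equality case you only need that $\boldsymbol{x}$ is an eigenvector of $A_\alpha(G')$ for its largest eigenvalue (which follows from equality in the Rayleigh quotient for a symmetric matrix, whether or not $G'$ is connected), not that it is literally the Perron vector of $G'$.
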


An internal path of $G$ is a path $P$ (or cycle) with vertices $v_1,v_2,\dots, v_k$ (or $v_1=v_k$) such that $d_{v_1}\ge 3,\ d(v_k)\ge 3$ and $d(v_2)=\dots=d(v_{k-1})=2$.
\begin{lem} \cite[Lemma 1.1]{MR3988716}\label{lem7}
	Let $G$ be a connected graph with $\alpha\in [0,1)$ and $uv$ be some edge on an internal path of $G$. Let $G_{uv}$ denote the graph obtained from $G$ by subdivision of edge $uv$ into edges $uw$ and $wv$. Then $\rho_{\alpha}(G_{uv})<\rho_{\alpha}(G)$.
\end{lem}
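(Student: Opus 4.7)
I would apply the Rayleigh quotient principle: starting from the $\alpha$-Perron vector of $G_{uv}$, I would construct a test vector $\x$ on $V(G)$ whose Rayleigh quotient strictly exceeds $\rho_\alpha(G_{uv})$, which then forces $\rho_\alpha(G)>\rho_\alpha(G_{uv})$. Let $\y$ denote the $\alpha$-Perron vector of $G_{uv}$ with eigenvalue $\rho':=\rho_\alpha(G_{uv})$, and let $w$ be the new degree-$2$ vertex introduced by subdividing $uv$. The eigenvector equation at $w$ (whose only neighbors in $G_{uv}$ are $u,v$) immediately yields
\[
\y(w)=\frac{(1-\alpha)(\y(u)+\y(v))}{\rho'-2\alpha}.
\]

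Setting $\x:=\y|_{V(G)}$, the quadratic forms $\x^T A_\alpha(G)\x$ and $\y^T A_\alpha(G_{uv})\y=\rho'\,\y^T\y$ differ only on the three edges $uv$, $uw$, $wv$, since all other edges and all vertex degrees at $V(G)$ coincide in the two graphs. An edge-by-edge expansion then produces the key identity
\[
\x^T A_\alpha(G)\x \;-\; \rho'\,\x^T\x \;=\; (1-\alpha)\bigl[\,2\y(u)\y(v)-(\y(u)+\y(v))\y(w)\,\bigr],
\]
and substituting the above value of $\y(w)$ reduces the desired strict positivity to the single inequality
\[
2\y(u)\y(v)(\rho'-2\alpha) \;>\; (1-\alpha)(\y(u)+\y(v))^2.
\]

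Because $uv$ lies on an internal path, $G_{uv}$ contains a vertex of degree at least $3$ and is therefore neither a path nor a cycle. When $G_{uv}$ contains a cycle, Lemma~\ref{lemc2}(iii) gives $\rho'>2$, and in the symmetric case $\y(u)=\y(v)$ the displayed inequality is then immediate from AM--GM. The main obstacle will be the asymmetric case $\y(u)\ne\y(v)$: since $(\y(u)+\y(v))^2>4\y(u)\y(v)$ strictly, the crude bound $\rho'>2$ alone is too weak. To close the gap I would exploit the linear recurrence
\[
(\rho'-2\alpha)\,\y(p_j)\;=\;(1-\alpha)\bigl(\y(p_{j-1})+\y(p_{j+1})\bigr)
\]
satisfied by the Perron values along the degree-$2$ portion of the internal path through $w$. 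With the substitution $\mu:=(\rho'-2\alpha)/(1-\alpha)$ the recurrence has explicit Chebyshev-type solutions, and combining these with the eigenvector equations at the two endpoints of the internal path (each of degree at least $3$, providing the boundary conditions) should give quantitative control on $\y(u)/\y(v)$ that is sharp enough to verify the displayed inequality. Once the inequality is established, the Rayleigh quotient principle delivers $\rho_\alpha(G)\ge \x^T A_\alpha(G)\x/\x^T\x>\rho'=\rho_\alpha(G_{uv})$, completing the proof.
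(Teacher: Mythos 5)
The paper offers no proof of this lemma: it is imported verbatim from \cite{MR3988716} (Lemma~1.1), and the known proofs of such subdivision results (Hoffman--Smith type and their $A_\alpha$ analogues) work with the eigenvalue equations or characteristic polynomials along the whole internal path, not with a truncated Perron vector. Your algebra up to the reduction is correct: with $\x=\y|_{V(G)}$ one indeed gets $\x^TA_\alpha(G)\x-\rho'\,\x^T\x=(1-\alpha)\bigl(2y_uy_v-(y_u+y_v)y_w\bigr)$, so your method stands or falls with the inequality $2y_uy_v(\rho'-2\alpha)>(1-\alpha)(y_u+y_v)^2$.

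That inequality is false in general, so the ``asymmetric case'' is not a finishing detail to be closed by Chebyshev estimates --- it is where the approach collapses. Take $\alpha=0$ and let $G$ be the triangle $abc$ together with the edge $cd$ (an internal path of length one) and $25$ pendant vertices attached to $d$; subdivide $cd$ to get $G_{uv}$ with new vertex $w$. Numerically $\rho'\approx 5.103$, and the Perron vector of $G_{uv}$ has $y_c\approx 0.044\,y_d$ and $y_w\approx 0.205\,y_d$, so $2y_cy_d\rho'\approx 0.45\,y_d^2$ while $(y_c+y_d)^2\approx 1.09\,y_d^2$; hence $\x^TA_0(G)\x<\rho'\,\x^T\x$ and the Rayleigh bound yields nothing, even though the lemma is true here ($\rho_0(G)\approx 5.109>\rho'$). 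The point is that when the internal path is short and one endpoint carries far more Perron weight than the other (a large star at one end), the ratio $y_u/y_v$ is forced far from $1$ while $\rho'$ stays moderate, so the required inequality fails; moreover, in this example the internal path of $G$ has no degree-two vertices at all, so the path recurrence you plan to exploit is vacuous and cannot supply the missing control. A correct argument must proceed differently, e.g.\ by comparing $\phi_\alpha(G,x)$ and $\phi_\alpha(G_{uv},x)$ for $x\ge\rho_\alpha(G_{uv})$ using Lemmas \ref{lem1} and \ref{lemz2}, or by modifying the test vector along the entire path rather than simply deleting $w$. (A further caveat: as literally stated the strict inequality also needs $\rho_\alpha(G)>2$ or the exclusion of the Hoffman--Smith trees; for $\alpha=0$ and $G$ the double star with two pendant edges at each centre, subdivision of the central edge leaves the spectral radius equal to $2$, which is harmless in this paper since the lemma is only applied to graphs with $\rho_\alpha>2$.)
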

Let $u$, $v$ be two vertices of connected graph $G$ with degree at least $2$.
Take $G_{u,v}(k,l)$ be the graph obtained from $G$ by attaching the pendent paths $P_k$ to $u$ and  $P_l$ to $v$, respectively.
\begin{lem}\cite{MR3786249}\label{lem100}
	Let $G_{u,v}(k,l)$ be as defined above. If $k-l\ge 2$ and $(u,v) \in E(G)$, then  $\rho_{\alpha}(G_{u,v}(k-1,l+1))>\rho_{\alpha}(G_{u,v}(k,l))$ for $\alpha\in [0,1)$.
\end{lem}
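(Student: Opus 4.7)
My plan is to realize $G_{u,v}(k-1, l+1)$ as a single leaf-shift applied to $G_{u,v}(k, l)$ and then invoke Lemma \ref{lem8}. Label the pendant path at $u$ as $u = p_0, p_1, \ldots, p_k$ and the one at $v$ as $v = q_0, q_1, \ldots, q_l$, and set $G := G_{u,v}(k, l)$. The observation $G_{u,v}(k-1, l+1) = G - \{p_{k-1}p_k\} + \{q_l p_k\}$---i.e.\ the leaf $p_k$ is detached from $p_{k-1}$ and re-attached to $q_l$---combined with Lemma \ref{lem8} applied with $N = \{p_k\}$ (playing $q_l$ in the role of ``$u$'' and $p_{k-1}$ in the role of ``$v$'') reduces the whole task to proving that the $\alpha$-Perron vector $\x$ of $G$ satisfies $x_{q_l} \geq x_{p_{k-1}}$.

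To prepare for this Perron comparison, set $\rho := \rho_\alpha(G)$. The Perron equations at the interior and terminal vertices of each pendant path yield the identical three-term recurrence $x_{p_{i+1}} = \frac{\rho - 2\alpha}{1-\alpha} x_{p_i} - x_{p_{i-1}}$ with endpoint relation $x_{p_{k-1}} = \frac{\rho - \alpha}{1-\alpha} x_{p_k}$, and the analogous relations on the $v$-side. Defining $F_0 = 1$, $F_1 = (\rho - \alpha)/(1-\alpha)$, and $F_{i+1} = \frac{\rho - 2\alpha}{1-\alpha} F_i - F_{i-1}$, the closed-form representations $x_{p_{k-i}} = F_i x_{p_k}$ and $x_{q_{l-j}} = F_j x_{q_l}$ recast the target inequality $x_{q_l} \geq x_{p_{k-1}} = F_1 x_{p_k}$ as the single numerical condition $x_{q_l}/x_{p_k} \geq F_1$, or via the common-ratio identity $x_{q_{l-j}}/x_{p_{k-j}} = x_{q_l}/x_{p_k}$ (valid for $0 \leq j \leq l$), as the structural comparison $x_v \geq F_1 \, x_{p_{k-l}}$ between a ``base'' value $x_v$ and a value at distance $k - l \geq 2$ inside the $u$-path.

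The main obstacle is precisely this last inequality. I would proceed by combining the eigenvalue equations at the adjacent vertices $u$ and $v$---both of which receive positive contributions from the shared subgraph $H = G - uv$---with the monotonicity of $\{F_i\}$, which is strictly increasing whenever $\rho > 2$ (the case of interest in the intended applications by Lemma \ref{lemc2}(iii), and in particular for every unicyclic or bicyclic $G$). The hypothesis $k - l \geq 2$ enters essentially through the fact that $x_{p_{k-l}}$ sits at least two recurrence steps inward from $u$, providing exactly the slack needed to absorb the positive term $S_v := \sum_{w \in N_H(v)} x_w$ on the right-hand side of the $v$-equation and force $x_v \geq F_1 x_{p_{k-l}}$. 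The boundary case $l = 0$ reduces to the direct comparison $x_v \geq x_{p_{k-1}}$, handled the same way but without an $x_{q_1}$ term. Once $x_{q_l} \geq x_{p_{k-1}}$ is in hand, Lemma \ref{lem8} closes the proof.
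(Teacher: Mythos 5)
This lemma is quoted from \cite{MR3786249}; the paper you are working from gives no proof of it, so your attempt can only be judged on its own terms (the cited source, like the paper's own arguments of this type, e.g.\ Lemmas \ref{lema3} and \ref{lem15}, proceeds via characteristic-polynomial or switching arguments rather than a single leaf shift). Your reduction is sound: writing $G_{u,v}(k-1,l+1)=G-p_{k-1}p_k+q_lp_k$ and invoking Lemma \ref{lem8} with $N=\{p_k\}$ does reduce everything to the Perron-vector inequality $x_{q_l}\ge x_{p_{k-1}}$, and your path recurrence $x_{p_{k-i}}=F_ix_{p_k}$, $x_{q_{l-j}}=F_jx_{q_l}$ is correct. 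But that inequality is exactly where the proof has to live, and you leave it as a plan (``I would proceed by\dots''), so the decisive step is missing. Worse, the mechanism you describe is not the one that makes it true: the point is not that the slack from $k-l\ge 2$ ``absorbs'' the positive term $S_v=\sum_{w\in N_G(v)\setminus\{u\}}x_w$; that term is your friend, and if you discard it (bound it by $0$) the inequality \emph{fails} even with $k-l\ge 2$. Indeed, rewriting the target as $x_u\le \frac{F_k}{F_1F_l}x_{q_l}$ and using only the eigen-equation at $v$ without $S_v$ leads to the requirement $F_1F_{l+1}\le F_{l+2}$, which is false since $F_1F_{l+1}-F_{l+2}=\frac{\alpha}{1-\alpha}F_{l+1}+F_l>0$. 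The hypothesis $d_G(v)\ge 2$ must enter quantitatively: every $w\in N_G(v)\setminus\{u\}$ satisfies $x_w\ge\frac{(1-\alpha)}{\rho-\alpha}x_v=\frac{x_v}{F_1}$, and it is this contribution (together with the degree term $\alpha d_v x_v$ and $F_k\ge F_{l+2}$) that closes the estimate; the configuration $k=2$, $l=0$, $d_G(v)=2$ with the second neighbour of $v$ a pendant vertex gives \emph{exact equality} $x_{q_l}=x_{p_{k-1}}$, showing the inequality is tight and that no argument ignoring the structure at $v$ can succeed.

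For the record, the route is completable: from the equation at $v$ one gets $x_u\le\frac{1}{1-\alpha}\bigl((\rho-\alpha d_v)-(1-\alpha)\tfrac{F_{l-1}}{F_l}\bigr)x_v-S_v$, and inserting $S_v\ge \frac{x_v}{F_1}$, $d_v\ge 3$ (when $l\ge1$; $d_v\ge2$ when $l=0$), the recurrence $F_{i+1}=\tfrac{\rho-2\alpha}{1-\alpha}F_i-F_{i-1}$ and the monotonicity $F_k\ge F_{l+2}$ yields $x_u\le\frac{F_k}{F_1F_l}x_v$, i.e.\ $x_{q_l}\ge x_{p_{k-1}}$, with equality only in the boundary case above (which Lemma \ref{lem8} still tolerates, since it only needs a non-strict inequality). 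You should also be explicit that monotonicity of $(F_i)$ is used and is guaranteed by $\rho_\alpha>2$; that is fine for every application in this paper (Lemma \ref{lemc2}(iii)), but the lemma as stated allows graphs, e.g.\ certain trees, with $\rho_\alpha\le 2$, so your argument, even when completed, proves a restricted version rather than the cited statement in full generality. As submitted, then, the proposal is a correct reduction plus an unproven (and mis-motivated) key inequality, which is a genuine gap.
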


In \cite{belardoCombinatorialApproachComputing2010a}, Francesco Belardo et al. present a Schwenk-like formula for symmetric matrices.
\begin{thm}[\cite{belardoCombinatorialApproachComputing2010a}]\label{weightedcharpoly}
	Let $A=(a_{ij})$ be any symmetric square matrix of order $n$, and let $G\left(=G_{A}\right)$ be its Coates digraph with $V(G)=\{1,2,\dots,n\}$. If $v$ is a fixed vertex of $G$ then
	$$
		\phi(G)=\left(x-a_{v v}\right) \phi(G-v)-\sum\limits_{u\neq v}a_{uv}^2\phi(G-u-v)-2\sum_{C \in \mathcal{C}_{v}(G)} \omega_{A}(C) \phi(G-V(C)),
	$$
	where $\mathcal{C}_{v}(G)$ is the set of all undirected cycles of $G$ of length $\geqslant 3$ passing through $v$ and $\omega_{A}(C)=\prod \limits_{(i,j) \in E(C)} a_{i j}$.
\end{thm}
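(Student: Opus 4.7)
The plan is to derive the identity directly from the Leibniz expansion of $\det(xI-A)$, partitioning the permutations of $\{1,\ldots,n\}$ according to the cycle of $\sigma$ that contains the fixed vertex $v$. This generalises the classical Schwenk formula for the adjacency matrix of a simple graph; the only new features are keeping track of the diagonal entries $a_{ii}$ (loops in the Coates graph) and of the symmetric off-diagonal weights. Throughout I identify $\phi(G)$ with $\phi(A,x)=\det(xI-A)$, and use the elementary fact that if $C_{\sigma}$ denotes the cycle of $\sigma$ through $v$, then summing $\operatorname{sgn}(\sigma')\prod_i(xI-A)_{i,\sigma'(i)}$ over all permutations $\sigma'$ of $V\setminus V(C_\sigma)$ reassembles as $\phi(G-V(C_\sigma))$ via $\operatorname{sgn}(\sigma)=\operatorname{sgn}(C_\sigma)\cdot\operatorname{sgn}(\sigma')$.

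Writing $\phi(G)=\sum_{\sigma\in S_n}\operatorname{sgn}(\sigma)\prod_{i=1}^{n}(xI-A)_{i\sigma(i)}$, I would then split according to the length $k$ of $C_\sigma$. If $k=1$, then $v$ is a fixed point of $\sigma$, the $v$-factor equals $(xI-A)_{vv}=x-a_{vv}$, and summing over the remaining permutations yields the term $(x-a_{vv})\phi(G-v)$. If $k=2$ with $C_\sigma=(u\ v)$, the transposition has sign $-1$ and contributes $(-a_{uv})(-a_{vu})=a_{uv}^2$ using symmetry, so this case sums to $-\sum_{u\neq v}a_{uv}^2\,\phi(G-u-v)$.

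The heart of the argument is the case $k\geq 3$. A single directed $k$-cycle on ordered vertices $v=i_1,i_2,\ldots,i_k$ contributes $\operatorname{sgn}(\sigma)\prod_{j=1}^{k}(-a_{i_j i_{j+1}})=(-1)^{k-1}(-1)^{k}\prod_{j}a_{i_j i_{j+1}}=-\prod_{j}a_{i_j i_{j+1}}$. The subtle point, and the source of the factor $2$ in the formula, is that the symmetry $a_{ij}=a_{ji}$ forces the reversed directed cycle $(v,i_k,i_{k-1},\ldots,i_2)$ to be a \emph{distinct} permutation on the same vertex set, carrying exactly the same weight. Hence each undirected cycle $C\in\mathcal{C}_v(G)$ corresponds to precisely two permutations of $S_n$ whose contributions coincide, together giving $-2\,\omega_A(C)\,\phi(G-V(C))$. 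Summing over $C\in\mathcal{C}_v(G)$ and combining the three cases produces the stated identity.

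I expect the only genuine obstacle to be careful sign bookkeeping in the third case, together with the two-to-one correspondence between directed cycles through $v$ and the undirected cycles in $\mathcal{C}_v(G)$; once these are handled, everything else is the standard reassembly of Leibniz contributions into characteristic polynomials of principal submatrices.
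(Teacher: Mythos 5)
Your argument is correct, and the sign bookkeeping is handled accurately: the $k=1$ cycle gives $(x-a_{vv})\phi(G-v)$, a transposition $(u\,v)$ contributes sign $-1$ times $(-a_{uv})(-a_{vu})=a_{uv}^2$, and a directed $k$-cycle with $k\ge 3$ contributes $(-1)^{k-1}(-1)^k=-1$ times its weight, with the reversed traversal being a distinct permutation of equal weight (by symmetry of $A$), which is exactly where the factor $2$ comes from and why it does not appear in the $k=2$ case. Note, however, that the paper does not prove this statement at all — it is imported verbatim from the cited reference of Belardo, Li Marzi and Simi\'c — so there is no internal proof to compare against. The cited source derives the formula within its combinatorial framework for the Coates digraph (expansions over linear subgraphs, i.e.\ cycle covers, of $G_A$), whereas you work directly from the Leibniz expansion of $\det(xI-A)$ partitioned by the cycle of $\sigma$ through $v$; these are essentially the same combinatorics packaged differently, and your route has the advantage of being self-contained and of making transparent that zero-weight arcs and loops ($a_{ii}$ for $i\ne v$) are harmless, since the residual permutations over $V\setminus V(C_\sigma)$ reassemble, sign and all, into $\phi$ of the corresponding principal submatrix. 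The only point worth spelling out in a final write-up is precisely that reassembly identity $\operatorname{sgn}(\sigma)=\operatorname{sgn}(C_\sigma)\operatorname{sgn}(\sigma')$ together with the Leibniz formula for the principal submatrix, which you already flag as the elementary fact you rely on.
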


Given two disjoint rooted graphs $G$ and $H$ with roots $u$ and $v$, respectively, then the coalescence of $G$
and $H$ is the graph $G(u,v)H$ obtained by identifying the roots $u$ and $v$. 
The coalescence of  rooted graphs can be naturally extended to weighted digraphs (including weighted graphs). 

\begin{lem} \cite{belardoCombinatorialApproachComputing2010a}\label{lem1}
	Let $G(u,v)H$ be the coalescence of two rooted weighted digraphs (possibly with loops) $G$ and $H$ whose roots are $u$ and $v$, respectively. Then
	\begin{align*}
	\phi(G(u,v)H)=\phi(G)\phi(H-v)+\phi(G-u)\phi(H)-x\phi(G-u)\phi(H-v).
	\end{align*}
	\end{lem}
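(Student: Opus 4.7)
The plan is to apply Theorem \ref{weightedcharpoly} (the Schwenk-like formula) to the coalescence $G(u,v)H$ at the identified vertex $w$ that results from gluing $u$ and $v$, and then to reduce the resulting expression using the same theorem applied inside $G$ at $u$ and inside $H$ at $v$.

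First I would record the local structure at $w$. Since the entries of the weighted adjacency matrix add coordinate-wise when two vertices are merged, the loop weight at $w$ equals $a_{uu}+a_{vv}$, and the digraph $G(u,v)H - w$ is the disjoint union of $G-u$ and $H-v$, so $\phi(G(u,v)H-w)=\phi(G-u)\phi(H-v)$. Each neighbour of $w$ lies either in $V(G)\setminus\{u\}$, in which case the arc-weight is $a_{zu}$, or in $V(H)\setminus\{v\}$, in which case the arc-weight is $a_{zv}$; and every undirected cycle of length $\ge 3$ through $w$ is either a cycle of $G$ passing through $u$ or a cycle of $H$ passing through $v$. In each case, removing the neighbour or the cycle affects only the corresponding side, so the characteristic polynomial of the remainder factors cleanly as a product with either $\phi(G-u)$ or $\phi(H-v)$ as a factor. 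Applying Theorem \ref{weightedcharpoly} at $w$ therefore yields
\begin{equation*}
\phi(G(u,v)H) = (x-a_{uu}-a_{vv})\,\phi(G-u)\phi(H-v) - \phi(H-v)\,S_G - \phi(G-u)\,S_H,
\end{equation*}
where $S_G$ and $S_H$ collect the neighbour-sum and cycle-sum contributions from $G$ and from $H$, respectively.

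The next step is to notice that Theorem \ref{weightedcharpoly} applied inside $G$ at $u$ (and inside $H$ at $v$) rearranges to
\begin{equation*}
S_G = (x-a_{uu})\phi(G-u) - \phi(G), \qquad S_H = (x-a_{vv})\phi(H-v) - \phi(H).
\end{equation*}
Substituting these into the previous display and collecting the coefficient of $\phi(G-u)\phi(H-v)$, the scalar factor simplifies from $(x-a_{uu}-a_{vv})-(x-a_{uu})-(x-a_{vv})$ to $-x$; the surviving off-diagonal terms are exactly $\phi(G)\phi(H-v) + \phi(G-u)\phi(H)$, giving the stated identity.

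The main obstacle is bookkeeping: one has to check carefully that after coalescence the loop weights, the neighbours of $w$, and the cycles through $w$ partition exactly between the $G$-side and the $H$-side, so that the two applications of the Schwenk-like formula fit together and the diagonal $a_{uu}$, $a_{vv}$ contributions cancel cleanly. Once this correspondence is verified, the remainder is routine algebra.
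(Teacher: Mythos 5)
The paper does not prove this lemma at all — it is quoted from \cite{belardoCombinatorialApproachComputing2010a}, where it is established for general weighted digraphs by a combinatorial expansion of the Coates determinant formula (a sum over linear subgraphs split according to how the cycle cover behaves at the identified vertex). Your route is different: you derive the identity from the Schwenk-like formula (Theorem \ref{weightedcharpoly}) applied at the coalesced vertex $w$, and then re-use the same formula inside $G$ at $u$ and inside $H$ at $v$ to eliminate the neighbour- and cycle-sums $S_G$, $S_H$. Your bookkeeping is sound: in $G(u,v)H$ the vertex $w$ is a cut vertex, so $\phi(K-w)=\phi(G-u)\phi(H-v)$, every neighbour term and every cycle through $w$ lives entirely on one side and factors with $\phi(H-v)$ or $\phi(G-u)$, the loop at $w$ is $a_{uu}+a_{vv}$ (consistent with the paper's use for $A_\alpha$), and the substitution $S_G=(x-a_{uu})\phi(G-u)-\phi(G)$, $S_H=(x-a_{vv})\phi(H-v)-\phi(H)$ collapses the coefficient of $\phi(G-u)\phi(H-v)$ to $-x$, giving exactly the stated identity. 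This is a clean and short derivation, and it buys you everything the present paper actually needs, since all applications here are to the symmetric matrices $A_\alpha(G)$ via $\psi_\alpha$.

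The one caveat is scope: the lemma is stated for weighted \emph{digraphs} (possibly with loops), whereas Theorem \ref{weightedcharpoly} as quoted in the paper requires $A$ symmetric (note the $a_{uv}^2$ terms and the undirected cycles). Your argument therefore proves the symmetric (weighted graph) case only; for genuinely non-symmetric digraphs you would need either the digraph version of the Schwenk-type expansion (with $a_{uv}a_{vu}$ and directed cycles) or the original linear-subgraph argument of Belardo et al. If you present this as a proof of the lemma as stated, you should either restrict the statement to symmetric weighted matrices or indicate how the same cut-vertex factorization goes through in the directed setting; also, strictly speaking only the diagonal entries at $u$ and $v$ add under coalescence — the off-diagonal entries are merely inherited, since $u$'s and $v$'s neighbourhoods lie in disjoint vertex sets.
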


%Let $U$ be a vertex subset of graph $G$ and  $G_\alpha$ be the Coates graph with respect to matrix $A_\alpha(G)$. We write $\psi_{\alpha}(G,U)=\phi(G_\alpha-U,x)$.

Given three disjoint rooted Coates (di)graphs $G_1$, $G_2$ and $H$, $u_i$ is the root vertex of $G_i$ for $i=1,2$ and $v_1,v_2$ are two vertices of $H$. The rooted product of $H$ and $G_1,G_2$ with respect to $(u_1,u_2)$ and $(v_1,v_2)$, denoted by $H(G_1,G_2)$,  the Coates (di)graphs obtained from  $G_1$, $G_2$ and $H$   by coalescing  $u_i$ and  $v_{i}$ into new vertices $w_i$ for $i=1,2$.

Repeatedly using Lemma \ref{lem1}, we  have the following result.
\begin{lem}\label{lem:ch1}
	Let $G, H_i$ be connected graphs with $u,v \in V(G), w_i\in V(H_i)$ for $i=1,2.$
	Let $H=G(H_1,H_2)$ be rooted product of $G$ and $(H_1,H_2)$ with respect to $(u,v)$ and $(w_1,w_2)$, respectively, then
	\begin{align*}
		\phi_{\alpha}(H)=gg'_1g'_2+g_u(g_1-xg'_1)g'_2+g_v(g_2-xg'_2)g'_1+g_{uv}(g_1-xg'_1)(g_2-xg'_2),
	\end{align*}

	where $g=\phi_{\alpha}(G)$, $g_u=\psi_{\alpha}(G,u)$, $g_v=\psi_{\alpha}(G, v)$, $g_{uv}=\psi_{\alpha}(G,\{u,v\})$, $g_1=\phi_{\alpha}(H_1)$, $g'_1=\psi_{\alpha}(H_1, w_1)$, $g_2=\phi_{\alpha}(H_2)$ and $g'_2=\psi_{\alpha}(H_2, w_2)$.
\end{lem}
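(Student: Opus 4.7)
The plan is to build $H = G(H_1,H_2)$ in two coalescence steps and apply Lemma \ref{lem1} three times, tracking the characteristic polynomials carefully.

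First, I would form the intermediate graph $G^{(1)} := G(u,w_1)H_1$, the coalescence of $G$ at $u$ with $H_1$ at $w_1$. A direct application of Lemma \ref{lem1} gives
$$\phi_{\alpha}(G^{(1)}) = g\, g'_1 + g_u\, g_1 - x\, g_u\, g'_1 = g\, g'_1 + g_u(g_1 - x g'_1).$$
Since the vertex $v$ of $G$ is disjoint from $H_1$ and is not identified in this step, it remains a vertex of $G^{(1)}$. The target graph $H$ is then precisely the coalescence of $G^{(1)}$ at $v$ with $H_2$ at $w_2$, so a second application of Lemma \ref{lem1} yields
$$\phi_{\alpha}(H) = \phi_{\alpha}(G^{(1)})\, g'_2 + \psi_{\alpha}(G^{(1)}, v)\,(g_2 - x g'_2).$$

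The only remaining quantity is $\psi_{\alpha}(G^{(1)}, v) = \phi_{\alpha}(G^{(1)} - v)$. The key observation is that deleting $v$ from $G^{(1)}$ leaves a graph which is again a rooted coalescence, namely $(G-v)(u,w_1)H_1$: removing $v$ touches only the $G$-side of the coalescence and leaves the identification of $u$ with $w_1$ intact. A third application of Lemma \ref{lem1} (to the pair $G-v$ and $H_1$, with roots $u$ and $w_1$) then produces
$$\psi_{\alpha}(G^{(1)}, v) = g_v\, g'_1 + g_{uv}\, g_1 - x\, g_{uv}\, g'_1 = g_v\, g'_1 + g_{uv}(g_1 - x g'_1).$$

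Substituting this expression and the formula for $\phi_{\alpha}(G^{(1)})$ into the displayed identity for $\phi_{\alpha}(H)$ and regrouping terms gives the claimed four-summand formula. The computation is purely routine once the structural identifications are in place; the only step that requires any care is recognising that $G^{(1)} - v$ is still a coalescence of the required form, which is what allows the third (iterated) application of Lemma \ref{lem1}. I expect this verification of the structural decomposition, rather than the algebraic manipulation, to be the main point worth emphasising in the write-up.
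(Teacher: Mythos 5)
Your proof is correct and follows essentially the same route as the paper, which likewise obtains the formula by repeated application of Lemma \ref{lem1} to the two coalescence steps (the paper simply states this without writing out the details you supply). The only point to phrase carefully is that the deletions are performed on the Coates digraph of $A_{\alpha}(H)$ (so loop weights are retained), which is exactly why Lemma \ref{lem1}, stated for weighted digraphs with loops, applies to $G^{(1)}-v$ and yields $\psi_{\alpha}(G^{(1)},v)=g_vg'_1+g_{uv}(g_1-xg'_1)$ as you claim.
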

By Lemma \ref{lem:ch1}, we can easily get the following lemma.
\begin{lem}\label{lem:y2}Let $G, H_1, H_2$ be connected graphs with $u \in V(G), \{u_i,v_i\}\subset V(H_i)$ for $i=1,2.$
	Take $G_i=H_i(G,G)$ be rooted product of $H_i$ and $(G,G)$ with respect to vertex sequences $(u_i,v_i)$ and $(u,u)$ for $i=1,2$. Let
$$
\begin{aligned}
	DF_1(x)= & \phi_{\alpha}(H_1)-\phi_{\alpha}(H_2),                                                       \\
	DF_2(x)= & \psi_{\alpha}(H_1,u_1)+\psi_{\alpha}(H_1,v_1)-\psi_{\alpha}(H_2,u_2)-\psi_{\alpha}(H_2,v_2), \\
	DF_3(x)= & \psi_{\alpha}(H_1,\{u_1,v_1\})-\psi_{\alpha}(H_2,\{u_1,v_1\}).
\end{aligned}
$$
	Then
	\begin{align*}
		\phi_{\alpha}(G_1)-\phi_{\alpha}(G_2) & =DF_1(x)\psi_{\alpha}(G,u)^2+DF_2(x)\big(\phi_{\alpha}(G)-x\psi_{\alpha}(G,u)\big)\psi_{\alpha}(G,u) \\
		                                      & +DF_3(x)\big(\phi_{\alpha}(G)-x\psi_{\alpha}(G,u)\big)^2.
	\end{align*}
\end{lem}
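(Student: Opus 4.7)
The plan is to obtain the identity by applying Lemma~\ref{lem:ch1} separately to the two rooted products $G_1=H_1(G,G)$ and $G_2=H_2(G,G)$, and then subtracting the two resulting expressions. For convenience I will write $p:=\phi_{\alpha}(G)$ and $q:=\psi_{\alpha}(G,u)$. Since in each rooted product the two factor graphs are identical copies of $G$ rooted at the same vertex $u$, the symbols $g_1,g_2,g'_1,g'_2$ appearing in Lemma~\ref{lem:ch1} collapse in pairs to $g_1=g_2=p$ and $g'_1=g'_2=q$. That single observation is what makes the eventual expression depend on only three basic building blocks $q^2$, $(p-xq)q$, $(p-xq)^2$ rather than four.

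Applied to $G_1$, Lemma~\ref{lem:ch1} will produce
\begin{align*}
\phi_{\alpha}(G_1)=\phi_{\alpha}(H_1)\,q^2 &+ \bigl[\psi_{\alpha}(H_1,u_1)+\psi_{\alpha}(H_1,v_1)\bigr](p-xq)\,q \\
&+ \psi_{\alpha}(H_1,\{u_1,v_1\})(p-xq)^2,
\end{align*}
and the completely analogous expansion will hold for $\phi_{\alpha}(G_2)$ with $(H_1,u_1,v_1)$ replaced by $(H_2,u_2,v_2)$. Subtracting the two identities and grouping terms by the three common factors $q^2$, $(p-xq)q$, and $(p-xq)^2$ will make $DF_1(x)$, $DF_2(x)$, and $DF_3(x)$ appear precisely as defined in the statement, yielding the claimed formula.

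There is no real combinatorial obstacle in this proof; the content is a symbolic reduction once Lemma~\ref{lem:ch1} is invoked. The only point that requires a little care is matching the rooted-product construction $H_i(G,G)$ with the hypotheses of Lemma~\ref{lem:ch1}: one must verify that identifying the vertex pair $(u_i,v_i)$ of $H_i$ with the roots of two disjoint copies of $G$ is literally the construction described just before Lemma~\ref{lem:ch1}, so that the two factors are genuinely independent and the four quantities $g_1,g_2,g'_1,g'_2$ can be set equal in pairs. Once that bookkeeping is in place, the algebra is linear and routine.
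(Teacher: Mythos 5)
Your proposal is correct and is exactly the route the paper intends: Lemma~\ref{lem:y2} is stated as an immediate consequence of Lemma~\ref{lem:ch1}, namely apply that expansion to each $G_i=H_i(G,G)$ (so the base graph of Lemma~\ref{lem:ch1} is $H_i$ and both attached graphs are copies of $G$ rooted at $u$, forcing $g_1=g_2=\phi_{\alpha}(G)$ and $g'_1=g'_2=\psi_{\alpha}(G,u)$) and subtract. Your reading of the third coefficient as $\psi_{\alpha}(H_1,\{u_1,v_1\})-\psi_{\alpha}(H_2,\{u_2,v_2\})$ is the intended one; the $\{u_1,v_1\}$ appearing in the second term of $DF_3$ in the statement is a typo.
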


\section{Techniques for comparing the $\alpha$-spectral radii of graphs}

In the following, we list some necessary lemmas which will be used in order to prove the main result of this article.
\begin{lem} \label{lem4}
	Let $G, H$ be connected graphs and  $u\in V(G), v_1,v_2\in V(H)$. For $i=1,2$, take $G_i=G(u,v_i)H$ and when $x\ge \rho(H_\alpha-v_1)$, if $\psi_{\alpha}(H,v_2)>\psi_{\alpha}(H,v_1)$ 
	then $\rho_\alpha(G_1) < \rho_\alpha(G_2).$
\end{lem}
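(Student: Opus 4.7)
The plan is to apply Lemma \ref{lem1} (the coalescence formula) to obtain a factorisation of $\phi_\alpha(G_1)-\phi_\alpha(G_2)$, then show that both factors are strictly negative at $x=\rho_\alpha(G_1)$, which forces $\phi_\alpha(G_2)$ to be strictly negative there and hence $\rho_\alpha(G_2)>\rho_\alpha(G_1)$.

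By Lemma \ref{lem1}, for $i=1,2$,
\[
\phi_\alpha(G_i)=\phi_\alpha(G)\psi_\alpha(H,v_i)+\psi_\alpha(G,u)\phi_\alpha(H)-x\,\psi_\alpha(G,u)\psi_\alpha(H,v_i),
\]
so subtracting yields the clean factorisation
\[
\phi_\alpha(G_1)-\phi_\alpha(G_2)=\bigl[\psi_\alpha(H,v_1)-\psi_\alpha(H,v_2)\bigr]\bigl[\phi_\alpha(G)-x\,\psi_\alpha(G,u)\bigr].
\]
The hypothesis makes the first bracket strictly negative for every $x\ge\rho(H_\alpha-v_1)$. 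For the second bracket I would apply Theorem \ref{weightedcharpoly} at vertex $u$ of the Coates digraph of $A_\alpha(G)$, which rewrites
\[
\phi_\alpha(G)-x\,\psi_\alpha(G,u)=-\alpha d_G(u)\psi_\alpha(G,u)-(1-\alpha)^2\sum_{w\sim u}\psi_\alpha(G,\{u,w\})-2\sum_{C\in\mathcal{C}_u}(1-\alpha)^{|C|}\psi_\alpha(G,V(C)).
\]
Since $A_\alpha(G)$ is irreducible, the strict form of Lemma \ref{lemc2}(i) (Cauchy interlacing with strict inequality for proper principal submatrices) makes each $\psi$ on the right strictly positive for $x\ge\rho_\alpha(G)$; with $\alpha\in[0,1)$ and $u$ having at least one neighbour, the middle sum alone already forces the whole right-hand side to be strictly negative on $[\rho_\alpha(G),\infty)$.

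Set $\rho_1:=\rho_\alpha(G_1)$. To push both sign analyses down to $x=\rho_1$, I still need $\rho_1\ge\max\{\rho_\alpha(G),\rho(H_\alpha-v_1)\}$. The principal submatrix of $A_\alpha(G_1)$ indexed by $V(H)\setminus\{v_1\}$ is exactly $H_\alpha-v_1$, and the principal submatrix indexed by $V(G)$ entrywise dominates $A_\alpha(G)$ (its diagonal entry at the merged vertex is larger by $\alpha d_H(v_1)$), so Lemma \ref{lemc2}(i) delivers both inequalities. Consequently both bracketed factors are strictly negative at $x=\rho_1$, giving $\phi_\alpha(G_2)(\rho_1)<\phi_\alpha(G_1)(\rho_1)=0$. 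Because $\phi_\alpha(G_2)$ is monic and tends to $+\infty$, it has a zero strictly above $\rho_1$, and its largest zero is $\rho_\alpha(G_2)$; hence $\rho_\alpha(G_2)>\rho_\alpha(G_1)$.

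The main obstacle I anticipate is establishing the strict sign of $\phi_\alpha(G)-x\psi_\alpha(G,u)$ on $[\rho_\alpha(G),\infty)$, which depends on the Schwenk-like expansion together with the irreducibility-based strict version of Cauchy interlacing; once that is in hand the rest is a routine polynomial-value comparison at $x=\rho_\alpha(G_1)$.
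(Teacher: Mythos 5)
Your proof is correct and follows essentially the same route as the paper: the same factorisation of $\phi_{\alpha}(G_1)-\phi_{\alpha}(G_2)$ via Lemma \ref{lem1}, the same use of Lemma \ref{lemc2}(i) to place $\max\{\rho_{\alpha}(G),\rho(H_\alpha-v_1)\}$ below the relevant spectral radius, and then a sign comparison of the two characteristic polynomials. The only cosmetic differences are that you certify $\phi_{\alpha}(G,x)-x\psi_{\alpha}(G,u)<0$ on $[\rho_{\alpha}(G),\infty)$ by the Schwenk-type expansion of Theorem \ref{weightedcharpoly} at $u$ instead of the paper's one-line appeal to Lemma \ref{lemz2}, and that you evaluate at $x=\rho_{\alpha}(G_1)$ rather than on $[\rho_{\alpha}(G_2),\infty)$; both variants work equally well.
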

\begin{proof}
	By Lemma \ref{lem1}, for $i=1,2$, we have
	$$\phi_{\alpha}(G_i,x)=\psi_{\alpha}(G,u)\phi_{\alpha}(H,x)+(\phi_{\alpha}(G,x)-x\psi_{\alpha}(G,u))\psi_{\alpha}(H,v_i).
	$$
	Thus, 
	\begin{align}\label{eq1}
		\phi_{\alpha}(G_2,x)-\phi_{\alpha}(G_1,x)=(\phi_{\alpha}(G,x)-x\psi_{\alpha}(G,u))(\psi_{\alpha}(H,v_2)-\psi_{\alpha}(H,v_1)).
	\end{align}
	By Lemma \ref{lemz2}, we have  $\phi_{\alpha}(G,x)-x\psi_{\alpha}(G,u)<0$ for all $x\ge \rho_{\alpha}(G)$. By (i) of Lemma \ref{lemc2}, we have $\rho_\alpha(G_2)>\max\{\rho_\alpha(G),\rho(H_\alpha-v_1)\}$.
	From $\psi_{\alpha}(H,v_2)>\psi_{\alpha}(H,v_1)$ for $x\ge \rho(H_\alpha-v_1)$ and Formula \eqref{eq1},  we have $\phi_{\alpha}(G_2,x)<\phi_{\alpha}(G_1,x)$ for $x\ge \rho_\alpha(G_2)$. Then $\rho_\alpha(G_1) < \rho_\alpha(G_2)$ hold.
\end{proof}

\begin{lem} \label{lem:4}
	Let $G$ and $H$ be two graphs,  $w\in V(H)$ and $u_1,u_2\in V(G)$ such that $N_G(u_1)\backslash \{u_2\}\subset N_G(u_2)\backslash \{u_1\}$. Take $H_1=G(u_1, w)H$ and $H_2=G(u_2,w)H$, then we have $\rho_{\alpha}(H_2)>\rho_{\alpha}(H_1)$.
\end{lem}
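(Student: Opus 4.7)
The plan is to apply Lemma~\ref{lem8} (the Kelmans-type switching lemma) inside $H_1$, and to exploit the strict inclusion hypothesis via a symmetric case split on the $\alpha$-Perron vector of $H_1$. Let $\boldsymbol{y}$ denote the $\alpha$-Perron vector of $H_1$. The key observation is that $H_2$ can be produced from $H_1$ by a single Kelmans move: the merged vertex in $H_2$ is $u_2=w$, so passing from $H_1$ to $H_2$ amounts to transferring every edge $u_1 v$ with $v\in N_H(w)$ to $u_2 v$. Consequently, if $y_{u_2}\ge y_{u_1}$, then Lemma~\ref{lem8} applied in $H_1$ with $u=u_2$, $v=u_1$, and $N=N_H(w)$ gives $\rho_\alpha(H_2)>\rho_\alpha(H_1)$ immediately.

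The main obstacle is that the inequality $y_{u_2}\ge y_{u_1}$ is not transparent, because in $H_1$ the vertex $u_1$ carries both its original $G$-neighbors and the extra neighbors coming from $H$, while $u_2$ only carries its $G$-neighbors. I plan to dodge this by performing a second Kelmans move in the other direction. Set $B=N_G(u_2)\setminus\bigl(N_G(u_1)\cup\{u_1\}\bigr)$; the strict inclusion hypothesis forces $B\neq\varnothing$. In the remaining case $y_{u_1}\ge y_{u_2}$, I apply Lemma~\ref{lem8} inside $H_1$ with $u=u_1$, $v=u_2$, and $N=B$, obtaining a graph $H_1'$ with $\rho_\alpha(H_1')>\rho_\alpha(H_1)$. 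I then check that the bijection $\sigma$ on vertices that swaps the labels $u_1\leftrightarrow u_2$ and fixes everything else is an isomorphism $H_1'\to H_2$; hence $\rho_\alpha(H_1')=\rho_\alpha(H_2)$, and we again conclude $\rho_\alpha(H_2)>\rho_\alpha(H_1)$. Since one of the two cases always occurs, the lemma follows.

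The remaining verifications I will need to carry out are routine and fall into two small groups. First, the subset conditions required by Lemma~\ref{lem8}: I must check that $N_H(w)\subseteq N_{H_1}(u_1)\setminus(N_{H_1}(u_2)\cup\{u_2\})$ and that $B\subseteq N_{H_1}(u_2)\setminus(N_{H_1}(u_1)\cup\{u_1\})$, both of which reduce to the disjointness of $V(G)\setminus\{u_1\}$ and $V(H)\setminus\{w\}$ in the coalescence together with the defining equality $N_{H_1}(u_2)=N_G(u_2)$. Second, the isomorphism $\sigma\colon H_1'\to H_2$, which I will verify by comparing neighborhoods of each vertex in $H_1'$ and $H_2$, using the decomposition $N_G(u_1)=A$ or $A\cup\{u_2\}$ and $N_G(u_2)=A\cup B$ or $A\cup B\cup\{u_1\}$ (with $A=N_G(u_1)\cap N_G(u_2)$) and splitting on the harmless sub-cases $u_1u_2\in E(G)$ and $u_1u_2\notin E(G)$. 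I expect this neighborhood bookkeeping, not the spectral step, to be the only place that needs care.
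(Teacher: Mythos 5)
Your proposal is correct, but it follows a genuinely different route from the paper. The paper argues through characteristic polynomials: it applies the Schwenk-type expansion of Theorem \ref{weightedcharpoly} at $u_1$ and at $u_2$ and shows $\psi_{\alpha}(G,u_2)-\psi_{\alpha}(G,u_1)>0$ for $x>\rho(G_\alpha-u_1)$ (the hypothesis enters via $d(u_2)>d(u_1)$, the extra neighbours in $N_2\setminus N_1$, and an injection of the cycles through $u_1$ avoiding $u_2$ into the cycles through $u_2$ avoiding $u_1$), and then converts this polynomial inequality into $\rho_{\alpha}(H_2)>\rho_{\alpha}(H_1)$ via Lemma \ref{lem4}. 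You instead work directly with the $\alpha$-Perron vector $\boldsymbol{y}$ of $H_1$ and Lemma \ref{lem8}: if $y_{u_2}\ge y_{u_1}$, one rotation of the $H$-side edges from $u_1$ to $u_2$ produces exactly $H_2$ (the disjointness of $V(H)\setminus\{w\}$ from $V(G)$ makes the hypothesis of Lemma \ref{lem8} trivial to verify); if $y_{u_1}\ge y_{u_2}$, rotating the edges to $B=N_G(u_2)\setminus\bigl(N_G(u_1)\cup\{u_1\}\bigr)$ (nonempty precisely because the inclusion is proper) yields a graph that the transposition $u_1\leftrightarrow u_2$ maps isomorphically onto $H_2$, and this is exactly where the hypothesis $N_G(u_1)\setminus\{u_2\}\subseteq N_G(u_2)\setminus\{u_1\}$ is consumed; your neighbourhood bookkeeping for that isomorphism does check out in both sub-cases $u_1u_2\in E(G)$ and $u_1u_2\notin E(G)$. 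Your route is more elementary — no Coates expansion, no cycle injection, no polynomial comparison — at the price of the two-case analysis; the paper's route gives the stronger quantitative statement that $\psi_{\alpha}(G,u_2)>\psi_{\alpha}(G,u_1)$ on $(\rho(G_\alpha-u_1),\infty)$, though that extra information is not used elsewhere. Do note (as the paper also implicitly assumes) that $G$ and $H$ must be connected and $w$ must not be isolated in $H$, since otherwise $H_1\cong H_2$ and the strict inequality fails; with that reading, your argument is complete.
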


\begin{proof}
	Let $C^*[u_1]$ be the set of all cycles containing $u_1$ in $G$ with $u_2 \not\in V(C)$ and $C^*[u_2]$ be the set of all cycles containing $u_2$ in $G$ with $u_1 \not\in V(C)$.  Take $N_1=N_G(u_1)\backslash \{u_2\}$ and $N_2=N_G(u_2)\backslash \{u_1\}$.
	From $N_1 \subset N_2$, we know that $d(u_2)>d(u_1)$ and there exists  an injective mapping $\Phi$ from $C^*[u_1]$ to $C^*[u_2]$ such that  $V(C)\cup \{u_2\}=V(\Phi(C))\cup \{u_1\}$  for any $C \in C^*[u_1]$.

	Applying
	Theorem \ref{weightedcharpoly} to $A_\alpha(G)(u_1)$ and $A_\alpha(G)(u_2)$, respectively, we have
	\begin{equation}\label{eq:cochar1}
		\begin{aligned}
			\psi_{\alpha}(G,u_1)= & (x-\alpha d(u_2)) \psi_{\alpha}(G,\{u_1,u_2\})
			-(1-\alpha)^2\sum_{\mathclap{w\in N_2}} \psi_{\alpha}(G,\{u_1,u_2, w\})                                     \\
			                      & -2\sum_{\mathclap{Z\in C^*[u_2]}}(1-\alpha)^{|Z|}\psi_{\alpha}(G,V(Z)\cup \{u_1\}),
		\end{aligned}
	\end{equation}
	\begin{equation}\label{eq:cochar2}
		\begin{aligned}
			\psi_{\alpha}(G,u_2)= & (x-\alpha d(u_1)) \psi_{\alpha}(G,\{u_1,u_2\})
			-(1-\alpha)^2\sum_{\mathclap{w\in N_1}} \psi_{\alpha}(G,\{u_1,u_2, w\})                                     \\
			                      & -2\sum_{\mathclap{Z\in C^*[u_1]}}(1-\alpha)^{|Z|}\psi_{\alpha}(G,V(Z)\cup \{u_2\}).
		\end{aligned}
	\end{equation}
	Let $DN=N_2\backslash N_1$ and $DC=C^*[u_2]\backslash C^*[u_1]$. From Formulae \eqref{eq:cochar1} and \eqref{eq:cochar2}, we have
	\begin{align*}
		df(x)= & \psi_{\alpha}(G,u_2)-\psi_{\alpha}(G,u_1)                                                                                \\
		=      & \alpha (d(u_2)-d(u_1)) \psi_{\alpha}(G,\{u_1,u_2\})+(1-\alpha)^2\sum_{\mathclap{w\in DN}}\psi_{\alpha}(G,\{u_1,u_2, w\}) \\
		       & +2\sum_{Z\in DC}(1-\alpha)^{|Z|}\psi_{\alpha}(G,V(Z)\cup \{u_1\}).
	\end{align*}
	Since  $A(G_\alpha-\{u_1,u_2\}),A(G_\alpha-\{u_1,u_2,w\}),A(G_\alpha-V(Z)\cup\{u_1\})$ are principal submatries of $A(G_\alpha-u_1)$, the spectral radius of these submatries are not more than $\rho(G_{\alpha}-u_1)$ according to (i) of Lemma \ref{lemc2}. Hence,
	when $x>\rho(G_{\alpha}-u_1)$,  $\psi_{\alpha}(G,\{u_1,u_2\})>0$, $\psi_{\alpha}(G,\{u_1,u_2, w\})>0$ and $\psi_{\alpha}(G,V(Z)\cup \{u_1\})>0$. Therefore,  	$\psi_{\alpha}(G,u_2)-\psi_{\alpha}(G,u_1)>0$ for $x>\rho(G_{\alpha}-u_1)$. By Lemma \ref{lem4}, we have $\rho_{\alpha}(H_2)>\rho_{\alpha}(H_1)$.
\end{proof}

Knowing the symmetries of a graph $G$ can be quite useful to find the spectral radius of the graph $G$.
Thus, we say that $u$ and $v$ are equivalent in $G$, if there exists an automorphism $\tau$ of $G$ such that $\tau(u)=v$.
\begin{lem}\label{lem:11}
	Let $H_1,\ H_2$ and $G$ be three graphs, $w_1\in V(H_1), \ w_2\in V(H_2)$ and $u,v\in V(G)$ such that $u$ and $v$ be equivalent vertices in $G$. Let $H=G(H_1,H_2)$ be rooted graph of $(H_1,H_2)$ and $G$ with respect to vertex sequences $(w_1,w_2)$ and $(u,v)$, respectively,  then
$$\psi_{\alpha}(H,u)-\psi_{\alpha}(H,v)=g_3(F'_1F_2-F_1F'_2),$$
where $F_1=\phi_{\alpha}(H_1)$, $F'_1=\psi_{\alpha}(H_1,w_1)$, $F_2=\phi_{\alpha}(H_2)$, $F'_2=\psi_{\alpha}(H_2,w_2)$ and $g_3=\psi_{\alpha}(G,\{u,v\})$.
\end{lem}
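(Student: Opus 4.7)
The plan is to compute each of $\psi_\alpha(H,u)$ and $\psi_\alpha(H,v)$ as a product arising from a disjoint-union decomposition of $H-u$ (resp.\ $H-v$), then apply Lemma \ref{lem1} to the remaining coalescence piece, and finally exploit the equivalence of $u$ and $v$ to cancel the cross-terms. For brevity I write $g=\phi_{\alpha}(G)$, $g_u=\psi_{\alpha}(G,u)$, $g_v=\psi_{\alpha}(G,v)$, and $g_{uv}=\psi_{\alpha}(G,\{u,v\})=g_3$.

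The first step is the structural observation: in $H=G(H_1,H_2)$ the vertex $u\in V(G)$ has been identified with $w_1\in V(H_1)$, so removing that identified vertex disconnects $H$ into $H_1-w_1$ and the coalescence of $G-u$ with $H_2$ at the pair $(v,w_2)$. Hence
\[
\psi_{\alpha}(H,u)=\phi_{\alpha}(H_1-w_1)\cdot\phi_{\alpha}\bigl((G-u)(v,w_2)H_2\bigr)=F'_1\cdot\phi_{\alpha}\bigl((G-u)(v,w_2)H_2\bigr).
\]
Applying Lemma \ref{lem1} to the coalescence on the right (with $G-u$ in the role of the first graph, rooted at $v$, and $H_2$ in the role of the second, rooted at $w_2$) and using $\phi_{\alpha}(G-u)=g_u$, $\phi_{\alpha}(G-u-v)=g_{uv}$, one obtains
\[
\psi_{\alpha}(H,u)=F'_1\bigl[g_{uv}F_2+(g_u-xg_{uv})F'_2\bigr].
\]
The symmetric computation, starting from the decomposition of $H-v$ into $H_2-w_2$ and the coalescence of $G-v$ with $H_1$ at $(u,w_1)$, yields
\[
\psi_{\alpha}(H,v)=F'_2\bigl[g_{uv}F_1+(g_v-xg_{uv})F'_1\bigr].
\]

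Subtracting these two expressions gives
\[
\psi_{\alpha}(H,u)-\psi_{\alpha}(H,v)=g_{uv}\bigl(F'_1F_2-F_1F'_2\bigr)+F'_1F'_2\bigl(g_u-g_v\bigr).
\]
The final step is to observe that because $u$ and $v$ are equivalent in $G$, any automorphism $\tau$ of $G$ sending $u$ to $v$ induces an isomorphism $G-u\cong G-v$, whence $g_u=g_v$. The second summand above therefore vanishes, and the remaining term is precisely $g_3(F'_1F_2-F_1F'_2)$, as required.

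There is no serious obstacle: the argument is essentially bookkeeping on top of Lemma \ref{lem1}. The only point that must be handled carefully is the decomposition of $H-u$ into a disjoint union — one has to check that $w_1$ is the unique vertex through which $H_1$ is attached, so deleting $u$ really severs the two pieces — and the clean cancellation relies squarely on the identity $g_u=g_v$ supplied by the automorphism.
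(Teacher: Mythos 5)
Your proposal is correct and takes essentially the same route as the paper's proof: delete the identified vertex, factor $\psi_{\alpha}(H,u)$ as $F'_1$ times Lemma \ref{lem1} applied to the remaining coalescence (and symmetrically for $v$), then cancel the cross term $F'_1F'_2(g_u-g_v)$ using the equivalence of $u$ and $v$. One small point of care: the deletions must be read at the level of principal submatrices of $A_{\alpha}(H)$ (Coates digraphs with their loop weights retained), so what you denote $\phi_{\alpha}(G-u)$ and $\phi_{\alpha}(H_1-w_1)$ are really $\psi_{\alpha}(G,u)$ and $\psi_{\alpha}(H_1,w_1)$ rather than $A_\alpha$-characteristic polynomials of the vertex-deleted graphs; with that reading (which is how you actually use them), the computation is exactly the paper's.
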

\begin{proof}
Let $g=\phi_{\alpha}(G)$, $g_1=\psi_{\alpha}(G,u), g_2=\psi_{\alpha}(G,v)$, then by Lemma \ref{lem1}, we have
\begin{align*}
	\psi_{\alpha}(H,u)= & \psi_{\alpha}(H_1,w_1)\big(\psi_{\alpha}(G,u)\psi_{\alpha}(H_2,w_2)+\psi_{\alpha}(G,\{u,v\})\phi_{\alpha}(H_2) \\
						& -x\psi_{\alpha}(G,\{u,v\})\psi_{\alpha}(H_2,w_2)\big )                                                         \\
	=                   & F'_1(g_2F'_2+g_3F_2-xg_3F'_2).
\end{align*}
Similarly, we have $$\psi_{\alpha}(H,v)=g_1F'_1F'_2+g_3F_1F'_2-xg_3F'_1F'_2.$$
Since $u$ and $v$ are equivalent vertices in $G$, we have $g_1=g_2$. Thus,
\begin{center}
	\hspace{0.25\linewidth}$\psi_{\alpha}(H,u)-\psi_{\alpha}(H,v)=g_3(F'_1F_2-F_1F'_2).$ \qedhere
\end{center}
\end{proof}
Let $u$ be the pendent vertex of path $P_t$. In the sequel, we always denote $f_{t-1}(x)=\psi_{\alpha}(P_{t},u)$. If there is no confusion, we write $f_{t-1}(x)$ as $f_{t-1}$.
It is easy to see that $f_0(x)=1$, $f_1(x)=x-\alpha$ and
\begin{equation}\label{rec:path}
	\begin{aligned}
		 & \phi_{\alpha}(P_t)=f_t(x)+\alpha f_{t-1}(x),         \\
		 & f_t(x)=(x-2\alpha)f_{t-1}(x)-(1-\alpha)^2f_{t-2}(x).
	\end{aligned}
\end{equation}

Then for $l-k=p \geq 0$,  we have
\begin{equation}\label{eq:recurrence}
	\begin{aligned}
		  & \phi_{\alpha}(P_{k+1})f_l(x)- \phi_{\alpha}(P_{l+1})f_k(x)=\begin{vmatrix}
			\phi_{\alpha}(P_{k+1}) & f_k(x) \\
			\phi_{\alpha}(P_{l+1}) & f_l(x)
		\end{vmatrix} \\
		= & \begin{vmatrix}
			f_{k+1}(x) & f_k(x) \\
			f_{l+1}(x) & f_l(x)
		\end{vmatrix}
		=(1-\alpha)^2\begin{vmatrix}
			f_{k}(x) & f_{k-1}(x) \\
			f_{l}(x) & f_{l-1}(x)
		\end{vmatrix}=\dots                                             \\
		= & (1-\alpha)^{2k}((x-\alpha)f_{p}(x)-f_{p+1}(x))                                        \\
		= & (1-\alpha)^{2k}(\alpha f_{p}(x)+(1-\alpha)^2f_{p-1}(x))\text{  (for $p \geq 1$)}.
	\end{aligned}
\end{equation}
So we have the following conclusion:
\begin{lem}\label{lem:pathrec}
	Suppose $l,k$ are positive number with $l \geq k$. Then, for $x \geq 2$,
	$$
		\begin{aligned}
			  & \phi_{\alpha}(P_{k+1})f_l(x)- \phi_{\alpha}(P_{l+1})f_k(x) 
			=  f_{k+1}(x)f_l(x)-f_k(x)f_{l+1}(x) \geq 0,
		\end{aligned}
	$$ and the equality holds if and only if $l=k$.
\end{lem}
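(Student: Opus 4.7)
The plan is to reduce the two-sided identity to a polynomial positivity question about the $f_t$, and then settle that positivity by an induction that carries a sharper ratio bound.

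First, I would verify the initial equality. Substituting $\phi_{\alpha}(P_t)=f_t+\alpha f_{t-1}$ from \eqref{rec:path} into the left-hand side, the $\alpha f_k f_l$ terms cancel and one is left with $f_{k+1}f_l-f_k f_{l+1}$. Second, I would invoke the chain of identities already worked out in \eqref{eq:recurrence}, which telescopes this determinant-style quantity down to $(1-\alpha)^{2k}\bigl(\alpha f_p(x)+(1-\alpha)^2 f_{p-1}(x)\bigr)$ with $p=l-k$; when $p=0$ the same computation returns zero, handling the equality case $l=k$. This reduces the remaining claim to showing $\alpha f_p(x)+(1-\alpha)^2 f_{p-1}(x)>0$ on $[2,\infty)$ for $p\geq 1$ (under the paper's standing assumption $\alpha\in[0,1)$).

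The core of the proof is therefore the positivity of $f_t(x)$ on $[2,\infty)$. I would establish by induction on $t$ the stronger invariant $f_t(x)\geq(1-\alpha)f_{t-1}(x)>0$. The bases $t=0,1$ are immediate from $f_0=1$ and $f_1=x-\alpha\geq 2-\alpha$. For the inductive step, the three-term recurrence $f_t=(x-2\alpha)f_{t-1}-(1-\alpha)^2 f_{t-2}$ combined with the inductive bound $f_{t-2}\leq f_{t-1}/(1-\alpha)$ yields
\begin{align*}
f_t\;\geq\;(x-2\alpha)f_{t-1}-(1-\alpha)f_{t-1}\;=\;(x-1-\alpha)f_{t-1}\;\geq\;(1-\alpha)f_{t-1},
\end{align*}
where the last step uses $x\geq 2$. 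With $f_p,f_{p-1}>0$ and $(1-\alpha)^{2k}>0$, the weighted sum $\alpha f_p+(1-\alpha)^2 f_{p-1}$ is strictly positive, closing the argument.

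I expect the main obstacle to be choosing the right inductive invariant: a naive induction on $f_t>0$ alone does not survive the subtraction of $(1-\alpha)^2 f_{t-2}$ in the recurrence, so one needs the ratio-type bound $f_t/f_{t-1}\geq 1-\alpha$ to absorb the subtracted term cleanly. A less elementary alternative I would keep in reserve is to identify $f_t$ with $\psi_{\alpha}(P_{t+1},u)$, the characteristic polynomial of a principal submatrix of $A_{\alpha}(P_{t+1})$, whose spectral radius is bounded by $\rho_{\alpha}(P_{t+1})<2$ via Lemma \ref{lemc2}(i) together with a short Perron--Frobenius argument applied to the pendant and internal-vertex equations on $P_{t+1}$; this also forces $f_t(x)>0$ on $[2,\infty)$ and yields the lemma.
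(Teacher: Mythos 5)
Your proposal is correct and takes essentially the same route as the paper: the paper's argument for this lemma is exactly the substitution $\phi_{\alpha}(P_{t+1})=f_{t+1}+\alpha f_{t}$ together with the telescoping identity \eqref{eq:recurrence}, which reduces everything to $(1-\alpha)^{2k}\bigl(\alpha f_{p}(x)+(1-\alpha)^{2}f_{p-1}(x)\bigr)$ and leaves the positivity of $f_{t}(x)$ on $[2,\infty)$ implicit. Your inductive invariant $f_{t}(x)\geq(1-\alpha)f_{t-1}(x)>0$ (valid under the standing assumption $\alpha\in[0,1)$) correctly supplies that remaining positivity step, so the argument is complete.
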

\begin{lem}\label{lem3}
	Let $u$ and $v$ be two equivalent vertices of connected graph $G$, $w$ be a vertex of connected graph $W$ and $H=G_{u,v}(k,l)$. Take $H_1=H(u,w)W$ and $H_2=H(v,w)W$, if $l-k\ge 1$, then $\rho_{\alpha}(H_2)>\rho_ {\alpha}(H_1)$.
\end{lem}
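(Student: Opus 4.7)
The plan is to reduce the problem to verifying $\psi_\alpha(H,v)>\psi_\alpha(H,u)$ at $x=\rho_\alpha(H_2)$ and then to close the argument exactly as in Lemma~\ref{lem4}. Viewing $H=G_{u,v}(k,l)$ as the rooted product $G(P_{k+1},P_{l+1})$ with each path rooted at an endpoint and identified with $u$ or $v$, Lemma~\ref{lem:11} (applied to the equivalent pair $u,v$ in $G$) gives
$$
\psi_\alpha(H,v)-\psi_\alpha(H,u)=\psi_\alpha(G,\{u,v\})\bigl(\phi_\alpha(P_{k+1})f_l-\phi_\alpha(P_{l+1})f_k\bigr),
$$
and the path recurrence~\eqref{eq:recurrence} collapses the second factor to $(1-\alpha)^{2k}\bigl(\alpha f_p+(1-\alpha)^2 f_{p-1}\bigr)$, with $p=l-k\ge 1$. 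It therefore suffices to show that, at $x=\rho_\alpha(H_2)$, each of $\psi_\alpha(G,\{u,v\})$, $f_p$ and $f_{p-1}$ is strictly positive.

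I would establish these three positivity statements uniformly by exhibiting the relevant matrices as principal submatrices of the irreducible matrix $A_\alpha(H_2)$ and then applying Lemma~\ref{lemc2}(i). The matrix $A(G_\alpha-\{u,v\})$ is exactly the principal submatrix of $A_\alpha(H_2)$ on $V(G)\setminus\{u,v\}$, since no vertex in $V(G)\setminus\{u,v\}$ is touched by the pendent paths or by $W$, so degrees and off-diagonal entries agree. Similarly, $A_\alpha(P_{l+1})-u$ is exactly the principal submatrix of $A_\alpha(H_2)$ on the vertex set of the pendent path $P_l$ attached to $v$; since $p\le l$, the matrices $A_\alpha(P_{p+1})-u$ and $A_\alpha(P_p)-u$ are themselves principal submatrices of $A_\alpha(P_{l+1})-u$, obtained by taking the last $p$ and $p-1$ rows/columns. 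Irreducibility of $A_\alpha(H_2)$ and Lemma~\ref{lemc2}(i) then yield $\rho_\alpha(H_2)>\rho(A(G_\alpha-\{u,v\}))$ together with $\rho_\alpha(H_2)>\rho(A_\alpha(P_{p+1})-u)\ge\rho(A_\alpha(P_p)-u)$, so all three polynomials are strictly positive at $x=\rho_\alpha(H_2)$.

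Finally, applying Lemma~\ref{lem1} to the coalescences $H_1=H(u,w)W$ and $H_2=H(v,w)W$ produces
$$
\phi_\alpha(H_2,x)-\phi_\alpha(H_1,x)=\bigl(\phi_\alpha(W,x)-x\,\psi_\alpha(W,w)\bigr)\bigl(\psi_\alpha(H,v)-\psi_\alpha(H,u)\bigr).
$$
At $x=\rho_\alpha(H_2)$ the first factor is strictly negative, because Lemma~\ref{lemz2} gives $\phi_\alpha(W,x)<x\,\psi_\alpha(W,w)$ for $x\ge\rho_\alpha(W)$, and one further principal submatrix comparison (on $V(W)\subset V(H_2)$, whose restricted matrix only adds degree contributions at $w$) delivers $\rho_\alpha(H_2)>\rho_\alpha(W)$; the second factor is strictly positive by the previous paragraph. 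Hence $\phi_\alpha(H_1,\rho_\alpha(H_2))>0$, which forces $\rho_\alpha(H_1)<\rho_\alpha(H_2)$. The obstacle I must work around is that Lemma~\ref{lem:pathrec} cannot be invoked directly: $\rho_\alpha(H_2)$ may dip below $2$ when $H_2$ is a tree (for example, when both $G$ and $W$ are trees and $\alpha$ is small), so the hypothesis $x\ge 2$ is not automatic; the principal submatrix embedding of the path matrices into the irreducible $A_\alpha(H_2)$ is the key technical device that replaces it.
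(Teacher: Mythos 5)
Your proof is correct and follows the same skeleton as the paper's: the identity $\psi_{\alpha}(H,v)-\psi_{\alpha}(H,u)=\psi_{\alpha}(G,\{u,v\})\bigl(f_{k+1}(x)f_l(x)-f_k(x)f_{l+1}(x)\bigr)$ obtained from Lemma \ref{lem:11} together with \eqref{eq:recurrence}, followed by the coalescence comparison that is the content of Lemma \ref{lem4} (which you re-derive inline from Lemma \ref{lem1} and Lemma \ref{lemz2} instead of citing). Where you genuinely deviate is in how positivity of the factors is certified: the paper invokes Lemma \ref{lem:pathrec} at $x\ge 2$ and then passes to the threshold $\rho(H_\alpha-u)$, which is really justified only because $A(G_\alpha-\{u,v\})$, $A_\alpha(P_{p+1})-u$ and $A_\alpha(P_{p})-u$ embed as principal submatrices of the relevant matrix — a point the paper leaves implicit (and indeed $\rho(H_\alpha-u)\ge 2$ need not hold in general); you make exactly this embedding explicit, working inside the irreducible matrix $A_{\alpha}(H_2)$, which is cleaner and self-contained, and it correctly patches that looseness. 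One caveat: your closing inference, that $\phi_{\alpha}(H_1,\rho_{\alpha}(H_2))>0$ alone ``forces'' $\rho_{\alpha}(H_1)<\rho_{\alpha}(H_2)$, is not valid from a single evaluation, since a characteristic polynomial can be positive at a point strictly below its largest root. The repair costs nothing you do not already have: your spectral-radius dominations are strict and uniform, so every factor keeps its sign on the whole ray $x\ge\rho_{\alpha}(H_2)$, whence $\phi_{\alpha}(H_1,x)>\phi_{\alpha}(H_2,x)\ge 0$ there and no root of $\phi_{\alpha}(H_1)$ can lie in that ray. Alternatively, since your submatrix argument works verbatim with $H_\alpha-u$ in place of $A_{\alpha}(H_2)$, you could establish $\psi_{\alpha}(H,v)>\psi_{\alpha}(H,u)$ for $x>\rho(H_\alpha-u)$ and then simply quote Lemma \ref{lem4}, as the paper does.
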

\begin{proof}
	By Lemma \ref{lem:11} and Formula \eqref{eq:recurrence}, we have
	\begin{align*}
		\psi_{\alpha}(H,v)-\psi_{\alpha}(H,u) & =\psi_{\alpha}(G,\{u,v\})\big (\phi_{\alpha}(P_{k+1})f_l(x)- f_k(x)\phi_{\alpha}(P_{l+1})\big) \\
		                                      & =\psi_{\alpha}(G,\{u,v\})\big (f_{k+1}(x)f_l(x)-f_k(x)f_{l+1}(x)\big ).
	\end{align*}
	When $l-k\ge 1$, by Lemma \ref{lem:pathrec}, we have $f_{k+1}(x)f_l(x)-f_k(x)f_{l+1}(x)>0$ for $x\ge 2$.  Since $\rho(H_\alpha-u)>\rho(G_\alpha-\{u,v\})$,
	we have
	\begin{center}
		$\psi_{\alpha}(H,v)-\psi_{\alpha}(H,u)>0$  for $x>\rho(H_\alpha-u)$.
	\end{center}
	By  Lemma \ref{lem4}, we have  $\rho_{\alpha}(H_2)>\rho_{\alpha}(H_1)$.
\end{proof}
By Lemma \ref{lem3}, we can easily get the following corollary.
\begin{cor}\label{cor:1}
	Let $u$ and $v$ be two vertices of the connected graph $G$ such that $N_G(u)\backslash \{v\} = N_G(v)\backslash \{u\}$, $H_3$ denote the graph obtained from $G_{u,v}(k,l)$ by identifying vertex $u$ with the center of $K_{1,s}$ and  $H_4$ denote the graph obtained from $G_{u,v}(k,l)$ by identifying vertex $v$ with the center of $K_{1,s}$. If $l-k\ge 1$, then $\rho_{\alpha}(H_4)>\rho_{\alpha}(H_3)$.
\end{cor}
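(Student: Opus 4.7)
The plan is to recognize Corollary \ref{cor:1} as a direct specialization of Lemma \ref{lem3}, with $W$ taken to be a star $K_{1,s}$ rooted at its center. The work is essentially bookkeeping: verify that the hypothesis of Lemma \ref{lem3} is met, and identify the constructed graphs $H_3, H_4$ with the $H_1, H_2$ produced by that lemma.

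First I would check that $u$ and $v$ are equivalent vertices of $G$. The hypothesis $N_G(u)\setminus\{v\}=N_G(v)\setminus\{u\}$ says $u$ and $v$ have identical neighborhoods outside of each other, so the transposition $\tau$ swapping $u$ with $v$ and fixing every other vertex preserves the edge relation of $G$ (an edge $uw$ with $w\neq v$ maps to $vw$, which is present since $w\in N_G(v)$, and the edge $uv$, if present, is mapped to itself). Hence $\tau\in\operatorname{Aut}(G)$ and $u$ is equivalent to $v$ in $G$ in the sense required by Lemma \ref{lem3}. Note that this equivalence need not hold in $H=G_{u,v}(k,l)$ (the attached paths have different lengths), which is fine — Lemma \ref{lem3} only needs equivalence in $G$.

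Next I would take $W=K_{1,s}$ and let $w$ be its center. Then the coalescence $H(u,w)W$ is obtained from $G_{u,v}(k,l)$ by identifying $u$ with the center of $K_{1,s}$, which is exactly the definition of $H_3$; similarly $H(v,w)W=H_4$. Since $l-k\geq 1$ by hypothesis, Lemma \ref{lem3} applies and yields $\rho_\alpha(H_4)=\rho_\alpha(H(v,w)W)>\rho_\alpha(H(u,w)W)=\rho_\alpha(H_3)$, which is the desired inequality.

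There is no real obstacle — the only point meriting a line of justification is the automorphism claim, since the statement of the corollary weakens the hypothesis of Lemma \ref{lem3} from ``equivalent vertices'' to the concrete combinatorial condition $N_G(u)\setminus\{v\}=N_G(v)\setminus\{u\}$, and one must confirm that the latter implies the former. Once that is observed, the corollary follows in one line by invoking Lemma \ref{lem3}.
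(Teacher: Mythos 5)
Your proposal is correct and matches the paper's approach: the paper derives the corollary directly from Lemma \ref{lem3} with $W=K_{1,s}$ rooted at its center, exactly as you do. Your explicit verification that $N_G(u)\backslash\{v\}=N_G(v)\backslash\{u\}$ yields the transposition automorphism (hence equivalence of $u$ and $v$ in $G$) is the only nontrivial point, and you handle it correctly.
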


\section{The extremal graphs with the maximal $\alpha$-spectral radius among $ \mathscr{U}(n,d)$}

Let $\mathscr{C}_t(n, d)$ denote the set of all the $n$-vertex $t$-cyclic graph with diameter $d$.
The sets  $\mathscr{C}_1(n, d)$ and $\mathscr{C}_2(n, d)$ also are written as $\mathscr{U}(n, d)$ and $\mathscr{B}(n, d)$, respectively.

\begin{pro}\label{pro:extremal}
	\label{pro3}Let $G$ be the graph  with maximal $\alpha$-spectral radius among $\mathscr{C}_t(n, d)$. Then $G$ has the following properties:
	\begin{enumerate}[(1).]
		\item If $e$ is an edge in some internal path of $G$, then $e$ must be in a $C_3$ of $G$.
		\item For any path $P$ of length $d$ in $G$, $V(P) \cap V(\widehat{G})\neq \varnothing$ always holds.
		\item Let $P$ be a path  of length $d$ in $G$ and $U=V(P)\backslash V(\widehat{G})$. When $U \neq
			      \varnothing$, take $U'=V(G)\backslash (V(P)\cup V(\widehat{G}))$. Then there exists some vertex $w$ in $V(\widehat{G})$ such that $G[U'\cup \{w\}]$ is a star $S$ with center $w$.
		\item Take $G$ and $H$ be graphs with maximal $\alpha$-spectral radius among $\mathscr{U}(n, d)$ and $\mathscr{B}(n, d)$, respectively. Then we have $\widehat{G}=C_3$ and $\widehat{H}\in \{\INF, \ThetA\}$. The notations of $\INF$ and  $\ThetA $   are introduced in Section $5$  of this paper.
	\end{enumerate}
\end{pro}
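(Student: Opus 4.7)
The plan for each of (1)--(3) is the same: assume $G$ is extremal in $\mathscr{C}_t(n,d)$ but the stated property fails, then build $G'\in\mathscr{C}_t(n,d)$ with $\rho_\alpha(G')>\rho_\alpha(G)$ using the lemmas collected in Sections~2 and~3. Part (4) will then follow as a direct consequence of (1)--(3) combined with the classification of unicyclic and bicyclic bases. The main technical difficulty throughout is preserving the diameter $d$ while the operations naturally try to change the overall shape of the graph.

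For (1), suppose $e=uv$ lies on an internal path $P_{\mathrm{int}}$ but is in no $C_3$. If $P_{\mathrm{int}}$ has length $\geq 2$, pick a degree-$2$ vertex $w$ of $P_{\mathrm{int}}$ adjacent to a neighbour $w'$ on $P_{\mathrm{int}}$: the reverse direction of Lemma~\ref{lem7} (``un-subdivision'' at $w$) contracts $ww'$ and strictly increases $\rho_\alpha$, after which I plan to attach a new pendant at a suitably chosen vertex so as to restore $|V(G')|=n$ (with another strict Perron--Frobenius gain) and to keep $\diam(G')=d$. When $P_{\mathrm{int}}$ is a single edge with $d(u),d(v)\geq 3$, the absence of a triangle on $e$ together with $t\leq 2$ forces $e$ to be either a cut-edge or an edge of a cycle of length $\geq 4$; in the first case Lemma~\ref{lem6} gives the improvement directly, and in the second an analogous un-subdivision combined with pendant re-attachment applies.

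For (2), let $P$ be a path of length $d$ disjoint from $V(\widehat{G})$. Since $V(G)\setminus V(\widehat{G})$ induces a forest whose components are trees each attached to $\widehat{G}$ at a single root, $P$ must lie entirely inside one pendant tree $T$ rooted at some $r\in V(\widehat{G})$. I would then use Proposition~\ref{proN1} to compare the $\alpha$-Perron entries along $P$ viewed as a pendant path from $r$, and apply Lemma~\ref{lem8} to transplant a suitable subtree from a low-Perron vertex onto a high-Perron vertex of $\widehat{G}$; the resulting graph still lies in $\mathscr{C}_t(n,d)$ and has strictly larger $\alpha$-spectral radius, contradicting extremality.

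For (3), fix a diameter path $P$ with $U=V(P)\setminus V(\widehat{G})\neq\varnothing$ and set $U'=V(G)\setminus(V(P)\cup V(\widehat{G}))$. Choose $w^*\in V(\widehat{G})$ maximizing the $\alpha$-Perron entry among the vertices of $\widehat{G}$ that currently carry elements of $U'$, and iterate Lemma~\ref{lem8} to shift each element of $U'$ onto $w^*$; keeping the moves away from $V(P)$ preserves the diameter, and every step strictly increases $\rho_\alpha$, so the extremal $G$ must already have $G[U'\cup\{w^*\}]$ a star centred at $w^*\in V(\widehat{G})$. Finally for (4): applying (1) to $\widehat{G}$ rules out arcs of length $\geq 2$ between distinct branching vertices of the base; in the unicyclic case this forces $\widehat{G}=C_3$ (the subcase of only one branching cycle-vertex being eliminated by balancing pendant paths via Lemma~\ref{lem100}), and in the bicyclic case the same reasoning on each arc of an $\infty$- or $\theta$-base leaves exactly $\widehat{H}\in\{\INF,\ThetA\}$. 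I expect the diameter-preservation step in (1) to be the main obstacle; once that is handled, (2)--(4) reduce cleanly to the structural lemmas already in place.
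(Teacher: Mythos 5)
Parts (1) and (4) of your sketch essentially coincide with the paper's argument: contract an edge of the internal path (the inverse use of Lemma \ref{lem7}), then re-attach a pendant edge at a vertex chosen so that both the order $n$ and the diameter $d$ are restored, and read (4) off from (1); your separate treatment of a length-one internal path (where the contraction is not the inverse of a subdivision) via Lemma \ref{lem6} in the cut-edge case is a sensible refinement of what the paper does. The genuine problems are in (2) and (3), where you replace the paper's Perron-free moves by transplanting arguments that depend on Perron-entry inequalities you never establish.

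For (2) the paper takes the shortest path $u_1u_2\dots u_k$ from $P$ to $\widehat{G}$ and applies Lemma \ref{lem6} to the cut edge $u_1u_2$; this needs no information about the Perron vector and leaves the length-$d$ path intact. Your plan, transplanting ``a suitable subtree from a low-Perron vertex onto a high-Perron vertex of $\widehat{G}$'' via Lemma \ref{lem8}, requires $x_w\ge x_v$ with $w\in V(\widehat{G})$ and $v$ in the pendant tree containing $P$, and there is no reason for this to hold: when that tree carries most of the graph, every vertex of $\widehat{G}$ may have a smaller Perron entry than the relevant tree vertices, so no move in your prescribed direction is available (and a move in the opposite direction does not produce the configuration you need); you also give no argument that the transplanted graph still has diameter exactly $d$. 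The same difficulty recurs in (3): choosing $w^*$ to maximize the Perron entry among base vertices carrying elements of $U'$ does not license Lemma \ref{lem8}-moves of the $U'$-pieces hanging from vertices of $P\setminus V(\widehat{G})$, whose attachment vertices may have larger entries than $w^*$; moreover Lemma \ref{lem8} moves whole hanging subtrees, so even after all such moves you must still argue the result is a star of depth one (the paper obtains this first, from Lemma \ref{lem6}); and ``keeping the moves away from $V(P)$'' does not by itself preserve the diameter, since a pendant vertex attached at $w^*$ has eccentricity one more than that of $w^*$, which can exceed $d$. The paper sidesteps the directional issue with a symmetric two-star argument: if two nontrivial components with centers $u,v$ remain, it moves $T_1$ from $u$ to $v$ or $T_2$ from $v$ to $u$ according to whether $x_v\ge x_u$ or $x_u\ge x_v$, so that an admissible strict improvement always exists. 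As written, your (2) and (3) therefore contain genuine gaps that need to be closed, most naturally by reverting to the cut-edge contraction of Lemma \ref{lem6} and the either/or use of Lemma \ref{lem8}.
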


\begin{proof}
	(1).
	Assume $e$ is an edge in some internal path of $G$ and there exists no $C_3$ in $G$ such that $e \in E(C_3)$.
	Let $G'$ be the graph obtained from $G$ by contracting edge $e$.  By Lemma \ref{lem7}, we have $\rho_{\alpha}(G)<\rho_{\alpha}(G')$. It is easy to see that $0\leq \diam(G)-\diam(G')\leq 1$. When $\diam(G)=\diam(G')+1$, take $G^*$ be a graph obtained from $G'$ by attaching a pendant edge to one end vertex of some path of length $d-1$ in $G'$.  When $\diam(G)=\diam(G')$, suppose that $u \in V(G')$ is not end vertex of any path of length $d$ in $G'$.
	Take $G^*$ be a graph obtained from $G'$ by attaching a pendant edge to vertex $u$. We have $G^*\in \mathscr{C}_t(n, d)$. By $(i)$ of Lemma \ref{lemc2}, $\rho_{\alpha}(G')<\rho_{\alpha}(G^*)$. So $\rho_{\alpha}(G)<\rho_{\alpha}(G^*)$, a contradiction. Hence, $e$ must be in a $C_3$ of $G$.

	(2). Assume that $P$ is path of length $d$ in $G$ such that $V(P_{d+1})\cap V(\widehat{G})=\varnothing$. Then $P_k=u_1u_2\dots u_k$ ($k \geq 2$) be the shortest path between $P$ and $\widehat{G}$. Applying Lemma \ref{lem6} to the edge $u_1u_2$, we get a graph $G^*\in \mathscr{C}_t(n, d)$ such that $\rho_{\alpha}(G)<\rho_{\alpha}(G^*)$, a contradiction. Hence, $V(P_{d+1})\cap V(\widehat{G})\neq \varnothing$.

	(3). Let $G'=G[V(P)\cup V(\widehat{G})]$ and $G^*=G-E(G')$.
	Assume that there exists two connected components, denoted by $T_1, T_2$,  with order greater than one  of $G^*$. By Lemma \ref{lem6}, $T_1$ and $T_2$ must be star graphs with center vertices, denoted by $u,v$, in $V(G')$.
	Take $G_1$ (or $G_2$) be the graphs obtained from $G$ by removing tree $T_1$ (or $T_2$) from $u$ to $v$ (or $v$ to $u$).
	By Lemma \ref{lem8}, we have $\rho_{\alpha}(G)<\rho_{\alpha}(G_1)$ or $\rho_{\alpha}(G)<\rho_{\alpha}(G_2)$, where $G_1,G_2\in \mathscr{C}_t(n, d)$, a contradiction.
	So there  exists at most one connected component, denoted by $T$, with order greater than one in $G^*$. By Lemma \ref{lem7} and  $(i)$ of Lemma \ref{lemc2},  $|V(T)\cap V(\widehat{G})|=1$ holds.  Then $T$ is the desired star graph $S$.

	(4). It immediately follows from (1) of this proposition.
\end{proof}

% \begin{figureobject}{sub,
% 		label   = fig:unicyclic,
% 		caption =
% 		graphic width = \linewidth,
% 	}
% 	\includegraphicsubobject[page=13, caption = {$\Delta_1(s;a,b)$},  label = fig2]{
% 		figure
% 	}
% 	\includegraphicsubobject[page=3,caption ={$\Delta_2(s;a,b)$}, label = fig:delta]{
% 		figure
% 	}
% \end{figureobject}

\begin{figure}[ht]%---------------Doppelbild:Wärmestroeme--------
	\centering
	\subfloat[$\Delta_1(s;a,b)$]{\includegraphics[page=13,width=0.4\textwidth]{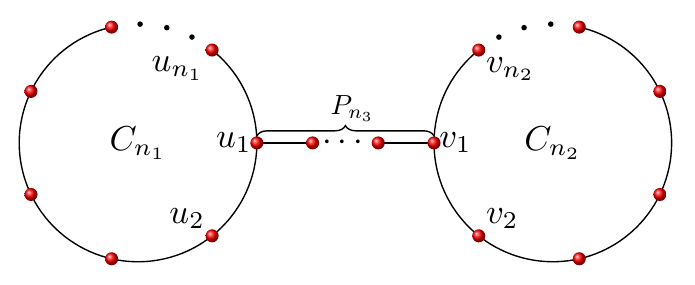}}\qquad
	\subfloat[$\Delta_2(s;a,b)$]{\includegraphics[page=3, width=0.4\textwidth]{figure}}
	\caption{$\Delta_i(s;a,b)$ for $i=1,2$}\label{fig:unicyclic}
\end{figure}

For $G\in \mathscr{U}(n, d) $, we have $n\ge 3$ and $1\le d\le n-2$. If $d=1$, then $G\cong C_3$. Therefore, in the following, we assume that $d\ge 2$ and $n\ge 4$.

Suppose that $V(C_3)=\{w_1,w_2,w_3\}$, 	We construct some unicyclic graphs obtained from $C_3$ as follows (see  Fig. \ref{fig:unicyclic}):
\begin{itemize}
	\item $\Delta_1(s;a,b)$: the graph obtained from $C_3$ by attaching $s$ pendant edges, pendant path of length $a$ and pendant path of length $b$ to vertex $w_2$.
	\item $\Delta_2(s;a,b)$: the graph obtained from $C_3$ by attaching $s$ pendant edges and pendant path of length $a$ to vertex $w_2$ and attaching pendant path of length $b$ to vertex $w_3$, respectively.
\end{itemize}
Let $\BTr_i(n,d)$ denote the set consisting of all graphs $\Delta_i(s;a,b)$ with $n$ vertices and diameter $d$ for $i=1,2$.
Take $U^*_1(n,d)=\Delta_1(s;a,b) \in \BTr_1(n, d)$ if $0\leq a-b\leq 1$ and $d=a+b$.
Take $U^*_2(n,d)=\Delta_2(s;a,b) \in \BTr_2(n, d)$ if $0\leq a-b\leq 1$ and $d=a+b+1$.

The authors of \cite{MR2278222}  and \cite{MR2859926} determined that $U_2^*(n,d)$ is the unique graph  with maximal adjacency spectral radius and signless Laplacian spectral radius, respectively, among  $\mathscr{U}(n, d)$.

In the following part of this section,  we will extend above results to $\alpha$-spectral graph theory.

\begin{lem}\label{lem:unicyclic}
	For fixed $s,\ a$ and $b$, if $G\in \BTr_1(n,d)$, then there exists some graph $G'\in \BTr_2(n,d)$
	such that $\rho_{\alpha}(G)<\rho_{\alpha}(G')$.
\end{lem}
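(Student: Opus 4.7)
The plan is a short case analysis on $b$, in each case exhibiting $G'$ by a single edge-move from $G$ and invoking Lemma \ref{lem8}. WLOG take $a\ge b$. I would set
\[
G'=\begin{cases}\Delta_2(s+1;a,b-1)&\text{if }b\ge 2,\\ \Delta_2(s+1;a-1,1)&\text{if }b=1,\\ \Delta_2(s;a-1,1)&\text{if }b=0,\end{cases}
\]
and verify $G'\in\BTr_2(n,d)$ directly from the vertex-count and diameter identities $3+s'+a'+b'=n$ and $a'+b'+1=d$. Labelling the path of length $a$ at $w_2$ as $w_2=p_0,p_1,\ldots,p_a$ and (when $b\ge 1$) the path of length $b$ at $w_2$ as $w_2=q_0,q_1,\ldots,q_b$, the corresponding edge move is to delete $q_1q_2$ and add $w_3q_2$ in the first case, and to delete $p_{a-1}p_a$ and add $w_3p_a$ in the other two cases; either move turns $G$ into $G'$.

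The move has the form required by Lemma \ref{lem8} with $u=w_3$, $v=q_1$ (resp.\ $p_{a-1}$), and $N$ the singleton containing the relocated vertex; the combinatorial inclusion $N\subseteq N_G(v)\setminus(N_G(u)\cup\{u\})$ is immediate. The Perron-vector condition $x_u\ge x_v$ is the interesting point. Since $w_1$ and $w_3$ are equivalent in $G$ (all external attachments sit at $w_2$), we have $x_{w_1}=x_{w_3}$, and the eigenvector equation at $w_3$ yields the closed form $x_{w_3}=\frac{1-\alpha}{\rho_\alpha-\alpha-1}x_{w_2}$. On the path side, the Perron values $y_k$ along any pendent path at $w_2$ satisfy the two-term recurrence $(1-\alpha)y_{k+1}=(\rho_\alpha-2\alpha)y_k-(1-\alpha)y_{k-1}$ with pendant boundary $(\rho_\alpha-\alpha)y_{\text{end}}=(1-\alpha)y_{\text{end}-1}$. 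Solving this with larger characteristic root $\lambda>1$ (well-defined by (iii) of Lemma \ref{lemc2}, since $\rho_\alpha>2$) gives $y_k/y_0\le 1/\lambda$ for every $k\ge 1$; combining with the algebraic inequality $\frac{1-\alpha}{\rho_\alpha-\alpha-1}>\frac{1}{\lambda}$ (which after squaring and clearing denominators reduces to $4(1-\alpha)^3>0$) delivers $x_{w_3}>x_{q_1}$ (resp.\ $x_{p_{a-1}}$). Lemma \ref{lem8} then gives $\rho_\alpha(G')>\rho_\alpha(G)$.

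The main obstacle is exactly the Perron-vector comparison $x_{w_3}\ge x_v$: the two vertices lie on structurally different branches of $G$ and Proposition \ref{proN1}, which only produces monotonicity along a single pendent path, does not apply here. The closed-form calculations sketched above bypass this by reducing the comparison to the single elementary inequality $\frac{1-\alpha}{\rho_\alpha-\alpha-1}>\frac{1}{\lambda}$. The few degenerate small-$d$ sub-cases (in particular $d=2$, where $\BTr_2(n,d)$ needs to be examined separately) are handled by direct inspection, completing the argument.
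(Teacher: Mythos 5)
Your proposal is essentially correct, but it takes a genuinely different route from the paper. The paper never touches the Perron vector: it writes $\Delta_1(s;a,b)$ as a coalescence $H_1(u,z_{s+1})H_2$, where $H_2$ is $C_3$ with $s+1$ pendant edges and the length-$a$ path at $w_2$ and $H_1$ is the pendant path of length $b-1$, re-attaches $H_1$ at $w_3$, and concludes from the neighborhood containment $N_{H_2}(z_{s+1})\subset N_{H_2}(w_3)$ via Lemma \ref{lem:4}, i.e.\ via a characteristic-polynomial comparison. You reach the same target graph $\Delta_2(s+1;a,b-1)$ by a single edge relocation and Lemma \ref{lem8}, which obliges you to prove $x_{w_3}\ge x_{q_1}$, and your estimate does work: $x_{w_3}=\tfrac{1-\alpha}{\rho_\alpha-\alpha-1}x_{w_2}$ by symmetry of $w_1,w_3$, the ratios along a pendent path satisfy $x_{q_k}/x_{q_{k-1}}\le 1/\lambda$ (best proved by backward induction on the ratio starting from the pendant boundary, which is the safe way to phrase your ``solving the recurrence'' step, since the boundary data sit at the far end), and $(1-\alpha)\lambda>\rho_\alpha-\alpha-1$. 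One arithmetic correction: after squaring, this last inequality reduces to $4(1-\alpha)(\rho_\alpha-2)>0$, not to $4(1-\alpha)^3>0$; it degenerates at $\rho_\alpha=2$, so the hypothesis $\rho_\alpha>2$ from Lemma \ref{lemc2}(iii) — which you already invoke — is exactly what saves it. The trade-off: the paper's coalescence argument is shorter, uniform in $b$, and needs no eigenvector information, while yours is more elementary and self-contained but forces the case split $b\ge 2$, $b=1$, $b=0$ together with the eigenvector analysis (and the $b\le 1$ moves additionally need $a\ge 2$ so that $x_{p_{a-1}}\le x_{w_2}\lambda^{-(a-1)}\le x_{w_2}/\lambda$). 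Concerning the degenerate cases you defer: when $b=1$ (and likewise when $a=1,b=0$, i.e.\ $d=2$) one has $\Delta_1(s;a,1)=\Delta_2(s+1;a,0)$, so no strict inequality is available at all there; this is a defect of the lemma as stated that the paper's own proof shares (its $b=1$ instance collapses to equality), so your deferral is no worse, but be aware that ``direct inspection'' cannot produce a strict inequality for $d=2$ in the stated form.
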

\begin{proof}
	When  $G\in \BTr_1(n,d)$, Wlog, let $G=\Delta_1(s;a,b)$. Let $H_1$ be the  pendant path of length $b-1$ and $H_2$ the graph obtained from $C_3$ by attaching $s+1$ pendant edges $w_2z_i(i=1,\dots,s+1)$ and pendant path of length $a$ to vertex $w_2$. Let $u$ be a pendant vertex of $H_1$, then $\Delta_1(s;a,b)=H_1(u,z_{s+1})H_2$ and $G'=\Delta_2(s+1;a,b-1)=H_1(u,w_3)H_2\in \BTr_2(n,d)$. Clearly, $N_{H_2}(z_{s+1})\subset N_{H_2 }(w_3)$, then by Lemma \ref{lem:4}, we have $\rho_{\alpha}(G)=\rho_{\alpha}(\Delta_1(s;a,b))<\rho_{\alpha}(\Delta_2(s+1;a,b-1))=\rho_{\alpha}(G')$.
\end{proof}

\begin{thm}\label{thml1}
	The graph $U_2^*(n,d)$ is the unique graph with  the maximal $\alpha$-spectral radius  among $\mathscr{U}(n, d)$.
\end{thm}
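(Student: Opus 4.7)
My plan is to combine the structural reduction in Proposition~\ref{pro:extremal} with the comparison lemmas established above, in three steps. Let $G^*$ be an extremal graph in $\mathscr{U}(n,d)$. Proposition~\ref{pro:extremal}(4) gives $\widehat{G^*}=C_3$ on some $\{w_1,w_2,w_3\}$, while parts~(2) and (3) produce a path $P$ of length $d$ meeting $V(C_3)$ and place every vertex outside $V(P)\cup V(C_3)$ into a single star $S$ with centre $w\in V(C_3)$. Since $P$ is a shortest path between its two pendant endpoints and any two $C_3$-vertices are adjacent, $P$ can share at most one edge with $C_3$ (otherwise a $C_3$-chord would strictly shorten it). This yields either Case~(A), where WLOG $V(P)\cap V(C_3)=\{w_2\}$ and $P$ splits into two pendant paths of lengths $a,b\ge 1$ at $w_2$ with $a+b=d$, or Case~(B), where WLOG $P$ uses the edge $w_2w_3$ and consists of pendant paths of length $a$ at $w_2$ and $b$ at $w_3$ with $a+b+1=d$.

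Next I fix the star centre by working in $G':=G^*-(V(S)\setminus\{w\})$. In Case~(A), $G'$ is $C_3$ with two pendant paths at $w_2$, so $N_{G'}(w_i)\setminus\{w_2\}=\{w_{4-i}\}\subsetneq N_{G'}(w_2)\setminus\{w_i\}$ for $i\in\{1,3\}$; two applications of Lemma~\ref{lem:4} force $w=w_2$ and hence $G^*\cong\Delta_1(s;a,b)$. In Case~(B), $G'$ is $C_3$ with pendant paths at both $w_2$ and $w_3$, and Lemma~\ref{lem:4} (with $u_1=w_1$, $u_2\in\{w_2,w_3\}$) rules out $w=w_1$. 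To decide between $w_2$ and $w_3$, I invoke Corollary~\ref{cor:1} applied to the bare $C_3$ with the symmetric pair $(u,v)=(w_3,w_2)$ (for which $N_{C_3}(w_2)\setminus\{w_3\}=\{w_1\}=N_{C_3}(w_3)\setminus\{w_2\}$) and $(k,l)=(b,a)$: if $a\ge b$ then $w=w_2$, so after relabelling to ensure $a\ge b$ we obtain $G^*\cong\Delta_2(s;a,b)$ with star and longer pendant co-located at $w_2$.

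Finally, Lemma~\ref{lem:unicyclic} shows that every $\Delta_1(s;a,b)\in\BTr_1(n,d)$ is strictly dominated in $\alpha$-spectral radius by $\Delta_2(s+1;a,b-1)\in\BTr_2(n,d)$, so Case~(A) is excluded; hence $G^*\cong\Delta_2(s;a,b)$ with $a\ge b$ and $s=n-d-2$. Should $a-b\ge 2$, Lemma~\ref{lem100} applied to the edge $(w_2,w_3)\in E(G^*)$ with $(k,l)=(a,b)$ produces $\Delta_2(s;a-1,b+1)$ of strictly larger $\alpha$-spectral radius, contradicting extremality; iterating forces $0\le a-b\le 1$, i.e., $G^*\cong U_2^*(n,d)$, with uniqueness immediate from the strictness of each comparison (when $a=b$ the two candidate graphs coincide by the $w_2\leftrightarrow w_3$ symmetry). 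The main obstacle is the star-location step in Case~(B): the $C_3$-symmetry is broken once the pendant paths are attached at $w_2$ and $w_3$, so one must deploy two distinct comparison principles in succession, first Lemma~\ref{lem:4} inside the enlarged subgraph $G'$ to eliminate $w=w_1$, and then Corollary~\ref{cor:1} inside the bare symmetric $C_3$ to choose between $w_2$ and $w_3$, arranging each application so that its hypothesis genuinely holds in the ambient graph being used.
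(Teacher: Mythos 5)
Your proposal is correct and follows essentially the same route as the paper: Proposition~\ref{pro:extremal} reduces the extremal graph to the families $\BTr_1(n,d)$ and $\BTr_2(n,d)$, after which Lemma~\ref{lem:unicyclic} eliminates the first family and Lemma~\ref{lem100} balances the two pendant paths to give $U_2^*(n,d)$. Your additional star-placement step via Lemma~\ref{lem:4} and Corollary~\ref{cor:1} simply spells out (correctly) a detail that the paper leaves implicit in its appeal to Proposition~\ref{pro:extremal}.
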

\begin{proof}
	Let $G$ be the graph with the maximal $\alpha$-spectral radius among $ \mathscr{U}(n, d)$.
	By Proposition \ref{pro:extremal}, we have $G\in \BTr_1(n,d)$ or $G\in \BTr_2(n,d)$.
	According to Lemmas \ref{lem100} and \ref{lem:unicyclic}, we have $G\cong U_2^*(n,d)$.
\end{proof}

\section{The extremal graphs with the maximal $\alpha$-spectral radius among $ \mathscr{B}(n,d)$}

It is known that there are two types of bases of bicyclic graphs,   $\infty(n_1, n_2, n_3)$ is obtained from two vertex-disjoint cycles  $C_{n_1}$ and $C_{n_2}$ by joining vertex $u$ of $C_{n_1}$ and vertex $v$ of $C_{n_2}$ by a path $P_{n_3} (n_3 \geq 1)$,  $\theta(n_1, n_2, n_3)$  ($n_1\geq n_2 \geq n_3$) is obtained from three pairwise internal disjoint paths of lengths $n_1, n_2, n_3$ from vertices $u$ to $v$ (see  Fig.\ref{fig:bicyclic}). The graphs  $\infty(n_1, n_2, n_3)$ and $\theta(n_1, n_2, n_3)$  are referred to as $\infty$-graph and $\Theta$-graph, respectively.

Let
$$\begin{aligned}
	\mathscr{B}_1(n,d) & =\{G| G\in \mathscr{B}(n,d) \text{ and $\widehat{G}$ is $\infty$-graph}\}, \\
	\mathscr{B}_2(n,d) & =\{G| G\in \mathscr{B}(n,d) \text{ and $\widehat{G}$ is $\Theta$-graph}\}.
\end{aligned}$$

\begin{figure}[ht]
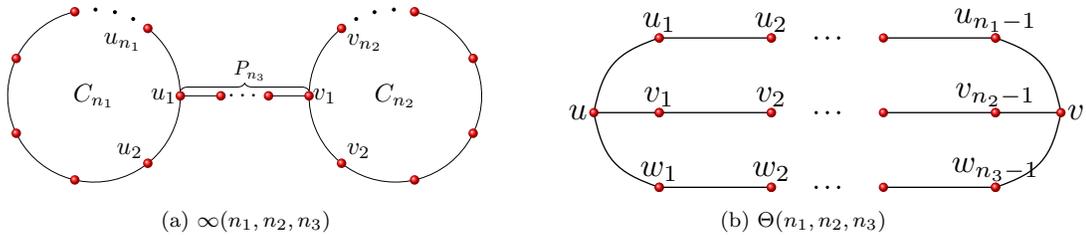
%---------------Doppelbild:Wärmestroeme--------
	\centering
	\subfloat[$\infty(n_1,n_2,n_3)$]{\includegraphics[page=1,width=0.4\textwidth]{figure}}\qquad
	\subfloat[$\Theta(n_1,n_2,n_3)$]{\includegraphics[page=2, width=0.4\textwidth,trim=0 0 0.5cm  0cm ]{figure}}
	\caption{$\infty(n_1,n_2,n_3)$ and $\Theta(n_1,n_2,n_3)$}\label{fig:bicyclic}
\end{figure}

\begin{figure}[ht]%---------------Doppelbild:Wärmestroeme--------
	\centering
	\subfloat[$\ThetA_1(s;a,b)$]{\includegraphics[page=4,width=0.4\textwidth]{figure}}
	\subfloat[$\ThetA_2(s;a,b)$]{\includegraphics[page=5, width=0.4\textwidth]{figure}}\\
	\subfloat[$\ThetA_3(s;a,b)$]{\includegraphics[page=6, width=0.4\textwidth]{figure}}
	\subfloat[$\ThetA_4(s;a,b)$]{\includegraphics[page=7, width=0.4\textwidth]{figure}}\\
	\subfloat[$\ThetA_5(s;a,b)$]{\includegraphics[page=8, width=0.4\textwidth]{figure}}
	\caption{$\ThetA_i(s;a,b)$ for $i=1,2,3,4,5$}\label{fig:figs}
\end{figure}

% \begin{figureobject}{sub,
% 		label   = fig:bicyclic,
% 		caption = {$\infty(n_1,n_2,n_3)$ and $\Theta(n_1,n_2,n_3)$},
% 		graphic width = \linewidth,
% 	}
% 	\includegraphicsubobject[page=1,  graphic width=1\linewidth, caption = {$\infty(n_1,n_2,n_3)$}, label = figb1]{
% 		figure
% 	}
% 	\includegraphicsubobject[page=2, caption = {$\Theta(n_1,n_2,n_3)$}, label = figb2]{
% 		figure
% 	}
% \end{figureobject}
% \begin{figureobject}{sub,
% 		label   = fig:figs,
% 		caption = {$\ThetA_i(s;a,b)$ for $i=1,2,3,4,5$},
% 		% caption = {$\ThetA_i(s;a,b)$ for $i=1,2,3,4,5$ and $\INF_i(s;a,b)$ for $i=1,2,3$},
% 		graphic width = \linewidth,
% 	}
% 	\includegraphicsubobject[page=4, caption = {$\ThetA_1(s;a,b)$}, label = fig4]{
% 		figure
% 	}
% 	\includegraphicsubobject[page=5, caption = {$\ThetA_2(s;a,b)$}, label = fig5]{
% 		figure
% 	}\\
% 	\includegraphicsubobject[page=6, caption = {$\ThetA_3(s;a,b)$}, label = fig6]{
% 		figure
% 	}
% 	\includegraphicsubobject[page=7, caption = {$\ThetA_4(s;a,b)$}, label = fig7]{
% 		figure
% 	}\\
% 	\includegraphicsubobject[page=8, caption = {$\ThetA_5(s;a,b)$}, label = fig8]{
% 		figure
% 	}
% \end{figureobject}
Let $\INF$ be the  bicyclic graph obtained  by coalescence two copies of the cycle graph $C_3$  with a common vertex. Suppose that $V(\INF)=\{w_0,w_1,w_2,w_3,w_4\}$ with $d(w_0)=4$ and $\{(w_1,w_2),(w_3,w_4)\}\subset E(\INF)$.

Let $\ThetA$ be the unique bicyclic graph of order 4 with $V(\ThetA)=\{w_1,w_2,w_3,w_4\}$ and $d(w_1)=d(w_3)=2,d(w_2)=d(w_4)=3$.
We construct some bicyclic graphs obtained from $\ThetA$ as follows (see (a)--(e) of Fig. \ref{fig:figs}):
\begin{itemize}
	\item $\ThetA_1(s;a,b)$: the graph obtained from $\ThetA$ by attaching $s$ pendant edges and two pendant paths of lengths $a$ and $b$ to vertex $w_1$.
	\item $\ThetA_2(s;a,b)$: the graph obtained from $\ThetA$ by attaching $s$ pendant edges and two pendant paths of lengths $a$ and $b$ to vertex $w_2$.
	\item $\ThetA_3(s;a,b)$: the graph obtained from $\ThetA$ by attaching $s$ pendant edges and pendant path of length $a$ to vertex $w_2$ and attaching pendant path of length $b$ to $w_4$.
	\item $\ThetA_4(s;a,b)$: the graph obtained from $\ThetA$ by attaching $s$ pendant edges and pendant path of length $a$ to vertex $w_1$ and attaching pendant path of length $b$ to $w_3$.
	\item $\ThetA_5(s;a,b)$: the graph obtained from $\ThetA$ by attaching $s$ pendant edges to $w_2$ and attaching pendant paths of lengths $a$ and $b$ to vertex $w_1$ and $w_3$, respectively.
\end{itemize}
Let $\THETA_i(n, d)$ denote the set consisting of all graphs $\ThetA_i(s;a,b)$  with $n$ vertices and diameter $d$ for $i=1,\dots,5$.
Take $\THETA(n, d)=\bigcup_{i=1}^{3}\THETA_i(n, d)$.

By Lemma  \ref{lem8}, the following result can be deduced.
\begin{lem}\label{lem:Inf2Theta}
	For any graph $G \in \mathscr{B}_1(n,d)$ with $\widehat{G}\cong \INF$, there exists some graph $H \in \mathscr{B}_2(n,d)$ with $\widehat{H}\cong \ThetA$ such that $\rho_\alpha(G)<\rho_\alpha(H)$.
\end{lem}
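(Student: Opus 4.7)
The plan is to apply Lemma \ref{lem8} once for a single 2-switch inside the $\INF$-base that converts it into a $\ThetA$-base and strictly increases the $\alpha$-spectral radius; the remaining task is to ensure the resulting graph still has diameter $d$. I label $V(\INF) = \{w_0, w_1, w_2, w_3, w_4\}$ so that $w_0$ is the unique vertex of degree $4$ and the two triangles are $w_0 w_1 w_2$ and $w_0 w_3 w_4$. Let $\x$ be the $\alpha$-Perron vector of $G$. The automorphism group of $\INF$ acts transitively on the four outer vertices (via the triangle-swap together with the within-triangle swaps $w_1 \leftrightarrow w_2$ and $w_3 \leftrightarrow w_4$), so after relabeling I may assume $x_{w_1} = \max\{x_{w_1}, x_{w_2}, x_{w_3}, x_{w_4}\}$ and in particular $x_{w_1} \geq x_{w_3}$. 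Since $\widehat{G} \cong \INF$, the only edges of $G$ between two vertices of $V(\INF)$ are the six edges of $\INF$ itself (an additional such edge would survive iterated pendant-deletion and enlarge the base), hence $w_1 w_4 \notin E(G)$ and $w_4 \in N_G(w_3) \setminus (N_G(w_1) \cup \{w_1\})$.

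Set $H = G - w_3 w_4 + w_1 w_4$. Applying Lemma \ref{lem8} with $u = w_1$, $v = w_3$, $N = \{w_4\}$ yields $\rho_\alpha(H) > \rho_\alpha(G)$. The six edges of $H$ inside $V(\INF)$ are $w_0 w_1, w_0 w_2, w_0 w_3, w_0 w_4, w_1 w_2, w_1 w_4$; together they exhibit the two triangles $\{w_0, w_1, w_2\}$ and $\{w_0, w_1, w_4\}$ sharing edge $w_0 w_1$, that is, a $\ThetA$ on $\{w_0, w_1, w_2, w_4\}$, plus the edge $w_0 w_3$. Thus iterated pendant-stripping removes $w_3$ and $\widehat{H} \cong \ThetA$ as required.

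The main obstacle is verifying $\diam(H) = d$, which is required for $H \in \mathscr{B}_2(n, d)$. Since only the edge $w_3 w_4$ is deleted and only $w_1 w_4$ is added, the only distances that can strictly increase are those of pairs whose shortest $G$-paths traverse $w_3 w_4$, and the increase is at most $1$ (via the length-$2$ detour $w_3 w_0 w_4$). If no diameter-realizing $G$-path uses $w_3 w_4$, then $\diam(H) = d$ and $H$ is the desired graph. Otherwise I would fall back on the symmetric 2-switch (delete $w_1 w_2$, add $w_2 w_3$, whose Lemma \ref{lem8}-hypothesis $x_{w_3} \geq x_{w_1}$ is precisely what is forced in those configurations where the diameter-path loads Perron weight onto the second triangle); this alternate $H$ keeps $w_3 w_4$ intact and hence preserves the diameter, while still producing a $\ThetA$-base. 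As a final safety net, should a switch end up with $\diam = d+1$, I would relocate the pendant at the far end of the extended diameter path to a vertex of larger Perron entry via a further application of Lemma \ref{lem8}, simultaneously restoring $\diam = d$ and increasing $\rho_\alpha$ once more.
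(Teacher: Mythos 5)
Your opening move is exactly the paper's: relabel so that $x_{w_1}\ge x_{w_3}$, pass to $H'=G-w_3w_4+w_1w_4$, get $\rho_\alpha(H')>\rho_\alpha(G)$ from Lemma \ref{lem8}, and observe that the base becomes $\ThetA$. The genuine gap is in the diameter repair, which is where the actual content of the lemma lies. First, your dichotomy rests on the claim that if no diameter-realizing path of $G$ uses $w_3w_4$ then $\diam(H)=d$; this is false, because the added edge $w_1w_4$ can also \emph{shorten} distances (it replaces the detour $w_1w_0w_4$). Concretely, let $G$ be $\INF$ with pendant paths of lengths $a=b\ge 1$ attached at $w_1$ and at $w_4$: the unique diametral pair are the two path ends, at distance $a+b+2=d$ through $w_1w_0w_4$ and not through $w_3w_4$, yet in $H'$ their distance, and indeed $\diam(H')$, equals $a+b+1=d-1$. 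So the case $\diam(H')<d$ can occur and none of your cases covers it. Second, your fallback when a diametral path does use $w_3w_4$ invokes the mirror switch, whose hypothesis $x_{w_3}\ge x_{w_1}$ you assert is ``forced'' by the configuration; this is neither proved nor true in general (trees may hang on both triangles with the longest path passing through $w_3w_4$ while $x_{w_1}>x_{w_3}$), so Lemma \ref{lem8} cannot be applied there. Third, the ``safety net'' for $\diam(H')=d+1$ is only a sketch: relocating a pendant vertex to a vertex of larger Perron entry does increase $\rho_\alpha$, but you do not verify that the resulting graph has diameter exactly $d$ nor that its base is still $\ThetA$.

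For comparison, the paper's proof distinguishes only $\diam(H')=d$ and $\diam(H')=d+1$, and in the latter case repairs the diameter with a concrete move: pick a non-pendant edge $e=uv$ on the path of length $d+1$ with $e\notin E(\widehat{H'})$, contract it and attach a pendant edge at the new vertex (the operation of Lemma \ref{lem6}); this strictly increases $\rho_\alpha$, keeps the base isomorphic to $\ThetA$, and brings the diameter back to $d$. To complete your argument you need an explicit repair of this kind for the $d+1$ case, a correct treatment (or removal) of the mirror-switch case, and an argument handling the possibility $\diam(H')=d-1$ exhibited above — the paper merely asserts $\diam(H')\ge\diam(G)$ without proof, but your write-up upgrades that assertion to an explicit claim that the example refutes, so you cannot lean on it.
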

\begin{proof}
	Suppose that $G \in \mathscr{B}_1(n,d)$ with $\widehat{G}=\INF$ and $V(\INF)=\{w_0,w_1,w_2,w_3,w_4\}$ with $d(w_0)=4$ and $\{(w_1,w_2),(w_3,w_4)\}\subset E(\INF)$. Take $\boldsymbol{x}$ be the $\alpha$-Perron vector of $G$. Wlog, suppose $x_{w_1} \geq x_{w_3}$. Let $H'=G-w_3w_4+w_1w_4$. By Lemma \ref{lem8}, we have $\rho_\alpha(G)<\rho_\alpha(H')$.
	It is easy to see that $0\leq \diam(H')-\diam(G)\leq 1$. If $\diam(H')=d$, take $H=H'$.  If $\diam(H')=d+1$, then $\diam(H')\geq 4$. Therefore, there exist some non-pendant edge $e=uv$ in the path of length $d+1$ such that $e \notin E(\widehat{H'})$. Let $H$ be the graph obtained from $H'$ by contracting edge $e $ into new vertex $w$ and attaching a pendant edge to $w$. By Lemma \ref{lem8}, we have $\rho_{\alpha}(H)>\rho_{\alpha}(H')$ and 
	$\diam(H)=d$.  Since $\widehat{H}\cong \ThetA$, $H$ is the desired graph.
\end{proof}
Take $B^*_3(n,d)=\ThetA_3(s;a,b) \in \THETA_3(n, d)$ if $0\leq a-b\leq 1$ and $d=a+b+1$.
Take $B^*_i(n,d)=\ThetA_i(s;a,b) \in \THETA_i(n, d)$ if $0\leq a-b\leq 1$ and $d=a+b+2$ for $i=4,5$.
In order to complete the proof of our main result, we need the following results.

\begin{lem}\label{thm:s1}
	The graph $B_3^*(n,d)$ is the unique graph with  the maximal $\alpha$-spectral radius  among $\THETA(n, d)$.
\end{lem}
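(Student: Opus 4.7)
The plan is to drive any extremal candidate $G\in\THETA(n,d)$ into the class $\THETA_3(n,d)$ and then pin it down to $B_3^*(n,d)$ via the symmetries of the base $\ThetA$.

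To move from $\THETA_2$ into $\THETA_3$, I would mimic the proof of Lemma~\ref{lem:unicyclic}. For $G=\ThetA_2(s;a,b)$ with $b\ge 1$, let $H_1=P_b$ rooted at an endpoint $u$, and let $H_2$ be obtained from $\ThetA$ by attaching $s+1$ pendant edges $z_1,\dots,z_{s+1}$ and a pendant path $P_a$ at $w_2$. Then $\ThetA_2(s;a,b)=H_1(u,z_{s+1})H_2$ while $\ThetA_3(s+1;a,b-1)=H_1(u,w_4)H_2$. Since $N_{H_2}(z_{s+1})=\{w_2\}\subsetneq\{w_1,w_2,w_3\}=N_{H_2}(w_4)$, Lemma~\ref{lem:4} yields $\rho_\alpha(\ThetA_3(s+1;a,b-1))>\rho_\alpha(\ThetA_2(s;a,b))$, and both graphs share order $s+a+b+4$ and diameter $a+b$. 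For $\THETA_1\to\THETA_3$ I would split on the shorter path length $b$: when $b\ge 2$ (so $d=a+b$), Lemma~\ref{lem:4} applied to $(u_1,u_2)=(w_1,w_2)$ in $\ThetA$ (whose neighborhoods satisfy $\{w_4\}\subsetneq\{w_3,w_4\}$) transplants the whole attached gadget from $w_1$ to $w_2$, producing $\ThetA_2(s;a,b)\in\THETA_2(n,d)$ with strictly larger $\rho_\alpha$, and the preceding step finishes the job; when $b\in\{0,1\}$ (so $d=a+2$), a coordinated move---first relocate one pendant edge from $w_1$ to $w_4$ using Lemma~\ref{lema1} or~\ref{lem8}, then transplant the remaining $s-1$ pendants and $P_a$ from $w_1$ to $w_2$ by one application of Lemma~\ref{lem:4}---produces $\ThetA_3(s-1;a,1)\in\THETA_3(n,d)$ with strictly larger $\alpha$-spectral radius.

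Once inside $\THETA_3(n,d)$ I would finish with two shape reductions on $\ThetA_3(s;a,b)$. The adjacency $w_2w_4\in E(\ThetA)$ enables Lemma~\ref{lem100} on the pendant paths $P_a,P_b$ attached at $(w_2,w_4)$, forcing $|a-b|\le 1$ at the optimum. The equal-neighborhood relation $N(w_2)\setminus\{w_4\}=\{w_1,w_3\}=N(w_4)\setminus\{w_2\}$ in $\ThetA$ then activates Corollary~\ref{cor:1} for the same pair $(w_2,w_4)$ with paths of lengths $\min(a,b),\max(a,b)$ and the $s$-edge star, forcing the star to sit at the endpoint of the longer path, i.e.\ $a\ge b$. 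Combining, $0\le a-b\le 1$, which is exactly the defining condition of $B_3^*(n,d)$; uniqueness is inherited from the strict inequality at each reduction step.

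The main obstacle is the boundary subcase $b\le 1$ in the $\THETA_1\to\THETA_3$ reduction. There the diameter of $\ThetA_1(s;a,b)$ is realized by the path through the far degree-$2$ vertex $w_3$ rather than by the two pendant paths, so a one-shot collapse of all attachments onto $w_2$ would shrink the diameter and leave $\THETA(n,d)$. One must choreograph the moves so that every intermediate graph stays in $\THETA(n,d)$ and, simultaneously, the neighborhood-inclusion hypothesis needed to apply Lemma~\ref{lem:4} (or the Perron-entry comparison needed for Lemma~\ref{lem8} or Lemma~\ref{lema1}) is genuinely verified at each step; this book-keeping is where most of the technical case analysis will reside.
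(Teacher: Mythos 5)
Your route is essentially the paper's: the reduction $\THETA_2(n,d)\to\THETA_3(n,d)$ via Lemma \ref{lem:4} (re-rooting the path $P_b$ from the pendant neighbour of $w_2$ to $w_4$) is literally the paper's step, the reduction $\THETA_1\to\THETA_2$ by transplanting the whole gadget from a degree-2 vertex of $\ThetA$ to a degree-3 vertex via Lemma \ref{lem:4} is the paper's other step, and the finish inside $\THETA_3(n,d)$ by Lemma \ref{lem100} (forcing $|a-b|\le 1$) together with Corollary \ref{cor:1} (forcing the star onto the longer-path side, so $a\ge b$) is exactly what the paper does. Your extra diameter bookkeeping in the $\THETA_1\to\THETA_2$ step is sound for $b\ge 2$ and is in fact more careful than the paper, which applies its step (1) for fixed $s,a,b$ without checking that $\ThetA_2(s;a,b)$ stays in $\THETA(n,d)$.

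The one place where your proposal is not a proof is the subcase you yourself flag: $G=\ThetA_1(s;a,b)$ with $b\in\{0,1\}$, where $\diam(G)=a+2$ and the one-shot transplant leaves the class. Your suggested fix --- first move a pendant edge from $w_1$ to $w_4$, then transplant the rest to $w_2$ --- is not justified by the tools you cite: Lemma \ref{lem:4} does not apply to the first move (in the ambient graph the pendant neighbours of $w_1$ destroy the inclusion $N(w_1)\setminus\{w_4\}\subset N(w_4)\setminus\{w_1\}$), and neither the Perron-entry inequality $x_{w_4}\ge x_{w_1}$ needed for Lemma \ref{lem8} nor the pair of inequalities needed for Lemma \ref{lema1} is established (indeed $w_1$ carries all attachments, so such inequalities are not obvious). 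So that corner case remains open in your write-up. To be fair, the paper's own proof silently skips it as well (its chain for $b\le 1$ ends at a graph of diameter $d-1$), and in the paper's actual application (Case 1 of Theorem \ref{thm1}) only $b\ge 2$ occurs for $\THETA_1$-graphs; but as a proof of the lemma as stated, this subcase still needs an argument, e.g.\ a direct comparison of $\ThetA_1(s;a,b)$ ($b\le 1$) with a member of $\THETA_3(n,a+2)$ rather than the unverified two-step shuffle.
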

\begin{proof}
	\noindent\textbf{(1)}. For fixed $s,\ a$ and $b$, take $G\cong \ThetA$ with $V(G)=\{w_1,w_2,w_3,w_4\}$ and $d(w_1)=d(w_3)=2,d(w_2)=d(w_4)=3$ and $H$ be the graph obtained  by attaching $s$ pendant edges and two pendant paths of lengths $a$ and $b$ to an isolated vertex $v$.  Then  $\ThetA_1(s;a,b)=G(w_3,v)H$ and $\ThetA_2(s;a,b)=G(w_2,v)H$. Since $N_{G}(w_3)\backslash \{w_2\}\subset N_{G}(w_2)\backslash \{w_3\}$, by Lemma \ref{lem:4}, we have  $$\rho_{\alpha}(\ThetA_1(s;a,b))<\rho_{\alpha}(\ThetA_2(s;a,b)).$$
	\noindent\textbf{(2)}. For fixed $s,\ a$ and $b$, let graph $G\cong \ThetA_2(s;a,1)$ and $H=P_{b}$ with pendant vertex $u$. Then  $\ThetA_2(s;a,b) \cong G(u_1,u)H$ and $\ThetA_3(s+1;a,b-1) \cong G(w_4,u)H$. Clearly, when $b=1$, $\ThetA_2(s;a,1)\cong \ThetA_3(s+1;a,0)$. Since $N_{G}(u_1)\subset N_{G }(w_4)$,  by Lemma \ref{lem:4}, for $2\le b$,   we have $$\rho_{\alpha}(\ThetA_2(s;a,b))<\rho_{\alpha}(\ThetA_3(s+1;a,b-1)).$$
	If $\ThetA_2(s;a,b)\in \THETA(n, d)$, then $\ThetA_3(s+1;a,b-1)$ is also in $\THETA(n, d)$.

	From above arguments in (1) and (2), we have that the graph with the maximal $\alpha$-spectral radius among $\THETA(n, d)$ must be in $\THETA_3(n, d)$. Then by Lemma \ref{lem100} and Corollary \ref{cor:1}, we have the graph $B_3^*(n,d)$ is the unique graph with  the maximal $\alpha$-spectral radius  among $\THETA_3(n, d)$.
	This completes the proof.
\end{proof}

\begin{lem}\label{lema3}When $d \geq 4$,
	for any $H_1=\ThetA_4(s;a,b)\in \THETA_4(n,d)$, there always exists  some graph $H_2\in \THETA_5(n,d)$ such that $\rho_{\alpha}(H_1)<\rho_{\alpha}(H_2)$.
\end{lem}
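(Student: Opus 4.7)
The plan is to apply Lemma \ref{lem4} with a decomposition that isolates the pendant star. Let $G'=K_{1,s}$ denote the star with center $u$ and let $H^*$ be the graph obtained from $\ThetA$ by attaching a pendant path of length $a$ at $w_1$ and a pendant path of length $b$ at $w_3$. Then $H_1=G'(u,w_1)H^*=\ThetA_4(s;a,b)$ and $H_2:=G'(u,w_2)H^*=\ThetA_5(s;a,b)$. A direct diameter check confirms $H_2\in\THETA_5(n,d)$: the pendants at $w_2$ lie at distance at most $\max(a+2,b+2)\le d$ from any tip of the paths, so $\diam(H_2)=a+b+2=d$. By Lemma \ref{lem4}, it is enough to prove
\[
\psi_\alpha(H^*,w_2)>\psi_\alpha(H^*,w_1)\qquad\text{for all }x\ge\rho(H^*_\alpha-w_1).
\]

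To establish this I would apply Theorem \ref{weightedcharpoly} to both $\phi(H^*_\alpha-w_1,x)$ and $\phi(H^*_\alpha-w_2,x)$ by pivoting at the common neighbour $w_4$. The essential structural asymmetry is that the triangle $w_2w_3w_4$ persists in $H^*-w_1$ but is destroyed in $H^*-w_2$, contributing an extra cycle term $+2(1-\alpha)^3\psi_\alpha(H^*,\{w_1,w_2,w_3,w_4\})$ to the difference. Expanding the remaining $\psi_\alpha(H^*,U)$ using the coalescence formula (Lemma \ref{lem1}) together with the path-polynomial recursion in Formula \eqref{rec:path}, a short computation gives
\[
\psi_\alpha(H^*,w_2)-\psi_\alpha(H^*,w_1)=(x-3\alpha)(1-\alpha)^2\bigl[f_af_b-f_{a-1}f_{b+1}\bigr]+(1-\alpha)^2\bigl[\alpha(x-3\alpha)+(1-\alpha)^2\bigr]f_{a-1}f_b+2(1-\alpha)^3f_af_b.
\]

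The triangle $w_2w_3w_4$, with its three diagonal entries equal to $3\alpha$ inherited from $H^*$, is a principal submatrix of $H^*_\alpha-w_1$ with spectral radius $2+\alpha$; hence $\rho(H^*_\alpha-w_1)\ge 2+\alpha\ge 2$. Consequently on the range of interest one has $x-3\alpha\ge 2-2\alpha\ge 0$, every $f_k(x)>0$ (since path-spectral-radii are at most $2$ by Lemma \ref{lemc2}(iii)), and $\alpha(x-3\alpha)+(1-\alpha)^2\ge(1-\alpha)^2>0$, so the middle summand is strictly positive.

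The main obstacle is the sign of the bracket $f_af_b-f_{a-1}f_{b+1}$. By Formula \eqref{eq:recurrence} this bracket is nonnegative when $a\le b+1$ and equals the nonpositive quantity $-(1-\alpha)^{2b}[\alpha f_{a-b-1}+(1-\alpha)^2f_{a-b-2}]$ when $a\ge b+2$. In the easy case $a\le b+1$ every summand is nonnegative and the strict positive term $2(1-\alpha)^3f_af_b>0$ closes the argument. In the hard case $a\ge b+2$, I would substitute the explicit form of the bracket, apply the recursion $f_k=(x-2\alpha)f_{k-1}-(1-\alpha)^2f_{k-2}$ to reduce the inequality to a polynomial expression purely in $f_{a-1},f_b,f_{a-b-1},f_{a-b-2}$, and verify positivity at $x=2$ using the closed form $f_k(2)=(1-\alpha)^{k-1}(k+1-\alpha)$ for $k\ge 1$; the inequality then extends to all $x\ge\rho(H^*_\alpha-w_1)\ge 2+\alpha$ by monotonicity, since the positive side grows in $x$ with higher polynomial degree than the negative side.
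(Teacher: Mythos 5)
Your setup and the balanced half of the argument coincide with the paper's proof: the same decomposition $H_1=H^*(w_1,u)K_{1,s}$, $H_2=H^*(w_2,u)K_{1,s}$ with $H^*=\ThetA_4(0;a,b)=\ThetA_5(0;a,b)$, the same expansion via Theorem \ref{weightedcharpoly}, and your displayed formula for $\psi_\alpha(H^*,w_2)-\psi_\alpha(H^*,w_1)$ is exactly the paper's expression $(1-\alpha)^2 f^*+2(1-\alpha)^3f_af_b$. For $a\le b+1$ your conclusion via Lemma \ref{lem:pathrec} and Lemma \ref{lem4} is the paper's argument and is correct (only note that when $a=0$ the vertex $w_1$ has degree $2$ in $H^*$, so the expansion changes slightly and $f_{a-1}$ must be handled as in the paper's separate $a=0$ case).

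The genuine gap is the case $a\ge b+2$. There, by Formula \eqref{eq:recurrence}, $f_af_b-f_{a-1}f_{b+1}=-(1-\alpha)^{2b}\bigl(\alpha f_{a-b-1}+(1-\alpha)^2f_{a-b-2}\bigr)$ is strictly negative, and your plan --- check positivity of the difference at $x=2$ and then ``extend by monotonicity, since the positive side has higher polynomial degree'' --- is not a proof: evaluating at one point together with a comparison of leading degrees only controls the sign for all sufficiently large $x$, not on the whole ray $[\rho(H^*_\alpha-w_1),\infty)$ that Lemma \ref{lem4} requires, and no monotonicity of the difference is actually established (nor is it evident). So the hypothesis of Lemma \ref{lem4} remains unverified precisely in the unbalanced case. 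Observe that the lemma only asserts the existence of \emph{some} $H_2\in\THETA_5(n,d)$; the paper exploits this to sidestep the polynomial estimate when $a\ge b+2$: it takes the $\alpha$-Perron vector of $H_1$, compares its entries along the two pendant paths, and uses Lemma \ref{lem8}, Proposition \ref{proN1} and the $2$-switching Lemma \ref{lema1} to pass to $\ThetA_5(s;a-1,b+1)$ or to $\ThetA_4(s;a-1,b+1)$, and from the latter (now closer to balanced, eventually reaching $B^*_5(n,d)$) back to the already-settled case $a\le b+1$, all while preserving diameter $d$. To repair your proof you must either give a real argument that $\psi_\alpha(H^*,w_2)>\psi_\alpha(H^*,w_1)$ on $[\rho(H^*_\alpha-w_1),\infty)$ when $a\ge b+2$ --- which would in fact prove a stronger statement than the paper does, namely $\rho_\alpha(\ThetA_4(s;a,b))<\rho_\alpha(\ThetA_5(s;a,b))$ for all $a,b$ --- or fall back on a rebalancing argument of the paper's kind.
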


\begin{proof}
	Let $H=\ThetA_4(0;a,b)=\ThetA_5(0;a,b)$ and $G=K_{1,s}$ with $d_G(u)=s$.
	Then $H_1=\ThetA_4(s;a,b)=H(w_1,u)G$ and $H_2=\ThetA_5(s;a,b)=H(w_2,u)G$.
	Take
	\begin{align*}
		h_1 & =\psi_{\alpha}(H, \{w_1, w_2\}), h_2=\psi_{\alpha}(H, \{w_1, w_2, w_3\}),         \\
		h_3 & =\psi_{\alpha}(H, \{w_1, w_2, w_4\}), h_4=\psi_{\alpha}(H, \{w_1, w_2,w_3,w_4\}),
	\end{align*}
	and 		$$h_5= \begin{cases}
			\psi_{\alpha}(H, \{w_1, w_2, v_1\}) & \text{ when $a \geq 1$}, \\
			0                                   & \text{otherwise}.
		\end{cases}$$
	Applying
	Theorem \ref{weightedcharpoly} to $A(H_\alpha-w_1)$ and $A(H_\alpha-w_2)$, respectively, we have
	\begin{align*}
		\psi_{\alpha}(H, w_1) & =(x-3\alpha)h_1-(1-\alpha)^2 (h_2+h_3) -2(1-\alpha)^3h_4, \\
		\psi_{\alpha}(H, w_2) & =
		\begin{cases}
			(x-3\alpha)h_1-(1-\alpha)^2(h_3+h_5) & \text{when $a \geq 1$}, \\
			(x-2\alpha)h_1-(1-\alpha)^2h_3 & \text{otherwise}.
		\end{cases}
	\end{align*}
	Then,
	\begin{align*}
	\psi_{\alpha}(H, w_2)-\psi_{\alpha}(H, w_1)&=
    \begin{cases}
	(1-\alpha)^2\big ( h_2-h_5\big)+2(1-\alpha)^3h_4& \text{when $a \geq 1$}, \\
	\alpha h_1+(1-\alpha)^2 h_2+2(1-\alpha)^3h_4 & \text{otherwise}.
     \end{cases}
     \end{align*}
	It is easy to see that when $x>\rho(H_\alpha-w_1)$,  $h_1, h_2,  h_4$ are positive. \\
	So $\psi_{\alpha}(H, w_2)>\psi_{\alpha}(H, w_1)$ for $a=0$.

	Now consider the case for $a \geq 1$.	By direct calculation,  we have
	$$\begin{aligned}
			h_2= & (x-3\alpha)f_{a}(x)f_{b}(x),\qquad  h_4=f_{a}(x)f_{b}(x),                               \\
			h_5= & f_{a-1}(x) \psi_{\alpha}(\ThetA_4(0;0,b), \{w_1,w_2\})                                  \\
			=    & f_{a-1}(x) \left((x-3\alpha)f_{b+1}(x)-(\alpha(x-3\alpha)+(1-\alpha)^2)f_{b}(x)\right).
		\end{aligned}
	$$
	Then, we have $\psi_{\alpha}(H, w_2)-\psi_{\alpha}(H, w_1)=(1-\alpha)^2 f^* +2(1-\alpha)^3f_a(x)f_b(x)$, where\\
	$f^*=(x-3\alpha)(f_a(x)f_b(x)-f_{a-1}(x)f_{b+1}(x))+(\alpha(x-3\alpha)+(1-\alpha)^2)f_b(x)f_{a-1}(x).$

	When $a\le b+1$, by Lemma \ref{lem:pathrec}, we have $f_a(x)f_b(x)-f_{a-1}(x)f_{b+1}(x) \ge 0$ for $x\geq 2$, and it is easy to see that when $x>\rho(H_{\alpha}- w_1)$,  $f_b(x), f_{a-1}(x)$ are positive. Then $\psi_{\alpha}(H, w_2)-\psi_{\alpha}(H, w_1)>0$ for $x\ge \rho(H_\alpha-w_1)$. Then by Lemma \ref{lem4},
	we have $\rho_{\alpha}(\ThetA_4(s;a,b))<\rho_{\alpha}(\ThetA_5(s;a,b))$.
	$\ThetA_5(s;a,b)$ is the desired graph for $a\le b+1$.

	When $a\ge b+2$, let $\boldsymbol{x}$ be the $\alpha$-Perron vector of $H_1$ and $v_0=w_1$, $u_0=w_3$, we distinguish the following cases.
	\case {1} {$x_{u_b}\ge x_{v_{a-1}}.$}
	Let $H'_1=H_1-v_{a-1}v_{a}+u_bv_a=\ThetA_4(s;a-1,b+1)$, by Lemma \ref{lem8}, we have $\rho_{\alpha}(\ThetA_4(s;a-1,b+1))=\rho_{\alpha}(H'_1)>\rho_{\alpha}(H_1)=\rho_{\alpha}(\ThetA_4(s;a,b))$.  Then by this and case $a\le b+1$, we have  $\rho_{\alpha}(\ThetA_4(s;a,b))\le \rho_{\alpha}(B^*_4(n,d)) <\rho_{\alpha}(B^*_5(n,d))$.
	$B^*_5(n,d)$ is the desired graph for this case.

	\case{2}{$x_{v_{a-1}}>x_{u_b}.$}
	\subcase{2.1}{$x_{v_{a-i}}>x_{u_{b-i+1}}$ for $1<i\leq b$.}
	When $i=b+1$, we have $x_{v_{a-b-1}}>x_{u_0}$, and by Proposition \ref{proN1}, we have $x_{v_1}>x_{v_{a-b-1}}>x_{u_0}$, let $H'_1=H_1-w_4u_0+w_4v_1=\ThetA_5(s;a-1,b+1)$, by Lemma \ref{lem8},
	we have $\rho_{\alpha}(\ThetA_5(s;a-1,b+1))=\rho_{\alpha}(H'_1)>\rho_{\alpha}(H_1)=\rho_{\alpha}(\ThetA_4(s;a,b))$.
	$\ThetA_5(s;a-1,b+1)$ is the desired graph for this subcase.

	\subcase {2.2}{$x_{v_{a-j+1}}>x_{u_{b-j+2}}$ and $x_{v_{a-j}}\le x_{u_{b-j+1}}$ for some $1<j\leq b$.} 
	Let $e_1=v_{a-j+1}v_{a-j}, e_2=u_{b-j+1}u_{b-j+2}$ and $H'$ be the graph obtained from $H$  
	by 2-switching operation $e_1 \xrightleftharpoons[u_{b-j+2}]{\;v_{a-j}} e_2$. By Lemma \ref{lema1}, we have $$\rho_{\alpha}(\ThetA_4(s;a-1,b+1))=\rho_{\alpha}(H'_1)>\rho_{\alpha}(H_1)=\rho_{\alpha}(\ThetA_4(s;a,b)).$$ Then by this and case $a\le b+1$, we have  $\rho_{\alpha}(\ThetA_4(s;a,b))\le \rho_{\alpha}(B^*_4(n,d)) <\rho_{\alpha}(B^*_5(n,d))$.
\end{proof}
\begin{lem}\label{lem15}
	For $a\ge b\ge 1$, then we have $$\rho_{\alpha}(\ThetA_5(s;a,b))> \rho_{\alpha}(\ThetA_5(s;a+1,b-1)).$$
\end{lem}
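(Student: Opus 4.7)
The plan is to apply Lemma~\ref{lem8} to the $\alpha$-Perron vector of $G'=\ThetA_5(s;a+1,b-1)$. Label the $w_1$-pendant path by $w_1=v_0,v_1,\ldots,v_{a+1}$ and the $w_3$-pendant path by $w_3=u_0,u_1,\ldots,u_{b-1}$. Deleting the edge $v_av_{a+1}$ and adding $u_{b-1}v_{a+1}$ realises $\ThetA_5(s;a,b)$ (the migrated pendant $v_{a+1}$ plays the role of $u_b$). So Lemma~\ref{lem8} applied with $u=u_{b-1}$, $v=v_a$ and $N=\{v_{a+1}\}$ (which is contained in $N(v_a)\setminus(N(u_{b-1})\cup\{u_{b-1}\})$) yields $\rho_\alpha(\ThetA_5(s;a,b))>\rho_\alpha(G')$, \emph{provided} the Perron vector $\mathbf{x}$ of $G'$ satisfies $x_{u_{b-1}}\ge x_{v_a}$. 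The whole lemma therefore reduces to proving this single entrywise comparison.

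To prove $x_{u_{b-1}}\ge x_{v_a}$, I will exploit the common linear recurrence satisfied by the Perron entries on both pendant paths. Set $\rho=\rho_\alpha(G')$ and $\mu=(\rho-2\alpha)/(1-\alpha)$, and define $F_0=1$, $F_1=(\rho-\alpha)/(1-\alpha)$, $F_{i+1}=\mu F_i-F_{i-1}$; the leaf equation $(\rho-\alpha)x_{v_{a+1}}=(1-\alpha)x_{v_a}$ together with the interior equations on a pendant path force $x_{v_{a+1-i}}=F_i\,x_{v_{a+1}}$, and analogously $x_{u_{b-1-j}}=F_j\,x_{u_{b-1}}$. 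In particular $x_{v_a}=F_1\,x_{w_1}/F_{a+1}$ and $x_{u_{b-1}}=x_{w_3}/F_{b-1}$, so the target inequality is equivalent to
$$x_{w_3}\,F_{a+1}\ \ge\ F_1\,F_{b-1}\,x_{w_1}.$$

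Next I read off the ratio $x_{w_1}/x_{w_3}$ from the eigenvector equations at $w_1$ and $w_3$ (both of degree $3$ in $G'$ when $b\ge 2$, sharing the common neighbours $w_2,w_4$). Using $x_{v_1}=F_a\,x_{w_1}/F_{a+1}$ and $x_{u_1}=F_{b-2}\,x_{w_3}/F_{b-1}$ and eliminating the common factor $(1-\alpha)(x_{w_2}+x_{w_4})$ gives
$$\frac{x_{w_1}}{x_{w_3}}\ =\ \frac{F_{a+1}\bigl[(\rho-3\alpha)F_{b-1}-(1-\alpha)F_{b-2}\bigr]}{F_{b-1}\bigl[(\rho-3\alpha)F_{a+1}-(1-\alpha)F_a\bigr]}.$$
Substituting back, the target inequality collapses to a polynomial condition on the $F_i$'s that I aim to verify using the Christoffel--Darboux-type identity $F_1F_p-F_{p+1}=\tfrac{\alpha}{1-\alpha}F_p+F_{p-1}$ (a consequence of \eqref{eq:recurrence}) together with the monotonicity $F_{i+1}F_j\ge F_iF_{j+1}$ for $i\le j$ (Lemma~\ref{lem:pathrec}). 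The hypothesis $a\ge b$ is precisely what makes the relevant $F$-products compare correctly, and the remaining slack is supplied by $\rho>2$, which holds by Lemma~\ref{lemc2}(iii) since $G'$ is neither a tree nor a cycle. The base case $b=1$ is treated separately: then $u_{b-1}=w_3$ has degree $2$ in $G'$ rather than $3$, the $w_3$-equation simplifies, and one shows directly that $x_{w_3}$ exceeds $x_{v_a}$ via the monotonicity of the Perron entries along the (longer) $w_1$-path.

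The main obstacle is the final algebraic step: reducing the substituted inequality into a form that is cleanly implied by $\rho>2$ and $a\ge b$ together. The estimate is tight enough that one must keep track of constants in the $F_i$-recurrence carefully rather than rely on order-of-magnitude bounds, and the base case $b=1$ needs a small bespoke argument since the analogous proof structure for $b\ge 2$ formally breaks when the $w_3$-pendant path is trivial.
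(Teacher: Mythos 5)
Your opening reduction is sound and in fact coincides with the first move of the paper's own argument: if the $\alpha$-Perron vector $\mathbf{x}$ of $G'=\ThetA_5(s;a+1,b-1)$ satisfies $x_{u_{b-1}}\ge x_{v_a}$, then relocating the leaf $v_{a+1}$ via Lemma \ref{lem8} gives the claim. The difference is that the paper never attempts to prove this entrywise comparison; it argues by contradiction and, in the case $x_{v_a}>x_{u_{b-1}}$, propagates comparisons down the two paths with the 2-switching Lemma \ref{lema1}, then uses Proposition \ref{proN1} and Lemma \ref{lema3} (the $\ThetA_4$ versus $\ThetA_5$ comparison) to reach a contradiction anyway. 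Your plan replaces that cascade by a direct proof that $x_{u_{b-1}}\ge x_{v_a}$ always holds, and this is exactly where the proposal stops short of being a proof. After your (correct) substitutions, what must be shown is, with $g_k=(\rho-3\alpha)F_k-(1-\alpha)F_{k-1}$, the inequality $g_{a+1}\ge F_1\,g_{b-1}$; since $g$ obeys the same recurrence $g_{k+1}=\mu g_k-g_{k-1}$, the critical case $a=b$ is equivalent to $g_b/g_{b-1}\ge 1+\tfrac{1}{\rho-2\alpha}$. This is \emph{not} "cleanly implied by $\rho>2$ and $a\ge b$": for $\alpha$ close to $1$ the right-hand side can exceed $2$ while the crude consequences of $\rho>2$ (e.g.\ bounds on $t$ or on $g_1/g_0$) degenerate, so one needs sharper lower bounds on $\rho_\alpha(G')$ (via the $K_4-e$ or star subgraphs at $w_2$) and a genuine monotonicity analysis of the ratios $g_k/g_{k-1}$. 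You explicitly acknowledge that this "final algebraic step" is the main obstacle and do not carry it out, so the central inequality on which the whole lemma rests is left unproved.

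Two further loose ends: the $b=1$ case is only waved at — "monotonicity of the Perron entries along the longer path" compares entries within the $w_1$-path, but the target compares $x_{v_a}$ with $x_{w_3}$ on the other side of the theta graph, so one still needs the eigenequations at $w_1,w_3$ (in the small example $a=b=1$, $s=0$, $\alpha=0$ the comparison reduces to $\rho\ge 2$, not to path monotonicity); and your displayed ratio $x_{w_1}/x_{w_3}$ requires knowing that both bracketed quantities are positive (they are, being equal to $(1-\alpha)(x_{w_2}+x_{w_4})$ divided by positive leaf entries), which should be said. In summary: the reduction and the recurrence bookkeeping are correct, but the decisive inequality — the analogue of the paper's Cases 2.1/2.2 fallback — is missing, so the proposal as written has a genuine gap. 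If you want to complete your route, either prove $g_{a+1}\ge F_1g_{b-1}$ with the sharper spectral lower bounds indicated above, or adopt the paper's contradiction scheme, which avoids the entrywise claim altogether.
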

\begin{proof}
	Let $\boldsymbol{x}$ be the $\alpha$-Perron vector of $\ThetA_5(s;a+1,b-1)$ and $v_0=w_1$, $u_0=w_3$. Assume that $\rho_{\alpha}(\ThetA_5(s;a+1,b-1))\ge \rho_{\alpha}(\ThetA_5(s;a,b))$. Then the following  assertion holds:

	\noindent{\bfseries{Assertion 1:}} $x_{v_{a-i}}>x_{u_{b-i-1}}$ for all $i=0,\dots,b-1$.

	We prove the assertion by induction on $i$. If $x_{u_{b-1}}\ge x_{v_a}$, then for $H=\ThetA_5(s;a+1,b-1)-v_av_{a+1}+u_{b-1}v_{a+1}$, we  have $H\cong \ThetA_5(s;a,b)$, and thus by  Lemma \ref{lem8}, we have
	$\rho_{\alpha}(\ThetA_5(s;a,b))=\rho_{\alpha}(H)>\rho_{\alpha}(\ThetA_5(s;a+1,b-1))$, a contradiction. Thus, $x_{v_a}>x_{u_{b-1}}$. The assertion holds  for $i=0$. If $b=1$, then $i=0$ and the claim follows.
	Suppose that $b\ge 2$, and $x_{v_{a-i}}>x_{u_{b-i-1}}$, where $0\le i\le b-2$. 
	If $x_{u_{b-(i+1)-1}}\ge x_{v_{a-(i+1)}}$, let $e_1=v_{a-(i+1)}v_{a-i}, e_2=u_{b-(i+1)-1}u_{b-i-1}$ and $H'$ be the graph obtained from $\ThetA_5(s;a+1,b-1)$  by 2-switching operation $e_1 \xrightleftharpoons[v_{a-i}]{\;u_{b-i-1}} e_2$. we have $H'\cong \ThetA_5(s;a,b)$ and thus by Lemma \ref{lema1}, we have $$\rho_{\alpha}(\ThetA_5(s;a,b))=\rho_{\alpha}(H')>\rho_{\alpha}(\ThetA_5(s;a+1,b-1)),$$ which contradicts the assumption. Thus $x_{v_{a-(i+1)}}>x_{u_{b-(i+1)-1}}$. So, Assertion 1 follows by induction.

	By Assertion 1 for $i=b-1$, we have $x_{v_{a-(b-1)}}>x_{w_3}$, and by Proposition \ref{proN1}, we have $x_{v_{1}}>x_{v_{a-(b-1)}}>x_{w_3}$, let $H_1=\ThetA_5(s;a+1,b-1)-w_3w_4+v_1w_4$, we have $H_1\cong \ThetA_4(s;b,a)$ and thus by Lemma \ref{lem8}, we have $\rho_{\alpha}(\ThetA_5(s;a+1,b-1))<\rho_{\alpha}(H_1)=\rho_{\alpha}(\ThetA_4(s;b,a) )$. For $b\le a$,
	by Lemma \ref{lema3}, we have $\rho_{\alpha}(\ThetA_4(s;b,a) )<\rho_{\alpha}(\ThetA_5(s;b,a) )=\rho_{\alpha}(\ThetA_5(s;a,b) )$. Then $\rho_{\alpha}(\ThetA_5(s;a+1,b-1))<\rho_{\alpha}(\ThetA_5(s;a,b) )$,  a contradiction. Thus, we have $\rho_{\alpha}(\ThetA_5(s;a,b))> \rho_{\alpha}(\ThetA_5(s;a+1,b-1)) $.
\end{proof}

In the following, we will determine the graph with the maximal  $\alpha$-spectral radius among  $\mathscr{B}(n,d)$.
\begin{thm}\label{thm1}
	Let $G$ be the graph with the maximal $\alpha$-spectral radius among $\mathscr{B}(n,d)$, then we have
	$G\cong B_3^*(n,d)$ or $G\cong B_5^*(n,d)$.
\end{thm}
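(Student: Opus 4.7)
The plan is to squeeze the extremal graph $G$ into one of the five canonical families $\ThetA_1,\ldots,\ThetA_5$ and then chain the three comparison lemmas of this section to isolate $B_3^*(n,d)$ and $B_5^*(n,d)$ as the only survivors.

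First, by Proposition~\ref{pro:extremal}(4) we have $\widehat{G}\in\{\INF,\ThetA\}$, and by Lemma~\ref{lem:Inf2Theta} any graph with $\widehat{G}\cong\INF$ is strictly dominated in $\alpha$-spectral radius by some graph in $\mathscr{B}(n,d)$ with $\ThetA$-base. Hence from now on we may assume $\widehat{G}\cong\ThetA$.

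Next I would pin down the pendant structure. By Proposition~\ref{pro:extremal}(2)--(3), a diameter path $P$ meets $V(\ThetA)$; write $P$ as a pendant path of length $a$ ending at some $u\in V(\ThetA)$, a $u$--$v$ subpath inside $\ThetA$, and a pendant path of length $b$ starting at $v\in V(\ThetA)$ (with $u=v$ allowed), and let $w\in V(\ThetA)$ be the center of the star spanned by the non-$\ThetA$ vertices outside $V(P)$. The isomorphism type of $G$ is then determined by the triple $(u,v,w)$ up to $\mathrm{Aut}(\ThetA)\cong\mathbb{Z}_2\times\mathbb{Z}_2$ (generated by $w_1\leftrightarrow w_3$ and $w_2\leftrightarrow w_4$). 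Orbit enumeration yields finitely many cases; for each case not already matching one of $\ThetA_1,\ldots,\ThetA_5$, I would shift the star center via Lemma~\ref{lem:4} (using an inclusion $N_G(x)\setminus\{y\}\subset N_G(y)\setminus\{x\}$ inside $\ThetA$) and, if needed, relocate a pendant-path root via Lemma~\ref{lem3}. These moves strictly increase $\rho_\alpha$, preserve the diameter $d$, and keep the graph in $\mathscr{B}(n,d)$. Consequently the extremal $G$ lies in $\bigcup_{i=1}^{5}\THETA_i(n,d)$.

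Finally I would chain the three lemmas just proved: Lemma~\ref{thm:s1} (the unique maximizer of $\rho_\alpha$ over $\THETA_1\cup\THETA_2\cup\THETA_3$ is $B_3^*(n,d)$), Lemma~\ref{lema3} (every $\ThetA_4(s;a,b)\in\THETA_4(n,d)$ is strictly dominated by some graph in $\THETA_5(n,d)$), and Lemma~\ref{lem15} iterated on $(a,b)$ (within $\THETA_5(n,d)$ the maximum forces $|a-b|\le 1$, i.e., $B_5^*(n,d)$). Combining these, the $\alpha$-spectral radius over $\bigcup_{i=1}^{5}\THETA_i(n,d)$ is attained only at $B_3^*(n,d)$ or $B_5^*(n,d)$, proving the theorem.

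The main obstacle I anticipate is the classification step: one has to verify, orbit by orbit, that every non-canonical triple $(u,v,w)$ admits a spectral-increasing transformation into one of the five templates while preserving the diameter and the bicyclic base. Special care is needed when the star center $w$ is disjoint from the pendant-path roots $\{u,v\}$, since moving $w$ along an edge of $\ThetA$ may require a subsequent path-length rebalancing; and when $\{u,v\}=\{w_1,w_3\}$ (the unique non-edge of $\ThetA$), the two resulting target families $\THETA_4$ and $\THETA_5$ differ only in the position of the star, so the comparison relies crucially on Lemma~\ref{lema3} rather than on the simpler Lemma~\ref{lem:4}.
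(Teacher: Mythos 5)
Your proposal is correct and follows essentially the same route as the paper: reduce to a $\ThetA$-base via Proposition~\ref{pro:extremal}(4) and Lemma~\ref{lem:Inf2Theta}, force the extremal graph into $\bigcup_{i=1}^{5}\THETA_i(n,d)$ using the structural proposition together with spectral-increasing relocations (Lemmas~\ref{lem:4}, \ref{lem3}, \ref{lem8}), and then conclude with the chain Lemma~\ref{thm:s1}, Lemma~\ref{lema3}, Lemma~\ref{lem15}. The only cosmetic difference is that the paper organizes the classification step as a three-case analysis on $|V(P)\cap V(\ThetA)|\in\{1,2,3\}$ rather than your orbit enumeration of the triple $(u,v,w)$, which amounts to the same case split.
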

\begin{proof}
	By (4) of Proposition \ref{pro:extremal}, Lemma \ref{lem:Inf2Theta}, we have $\widehat{G}=\ThetA$. Take
	$P$ be a path of $G$ with length $d$.
	\case{1} {$|V(P)\cap V(\ThetA)|=1$.}
	Applying Proposition \ref{pro:extremal} and Lemma \ref{lem8}, we can prove that $G\in \THETA_1(n, d)\cup \THETA_2(n, d)$. By Lemma \ref{thm:s1}, we have a graph $H\in \THETA_3(n, d)$ such that $\rho_{\alpha}(H)>\rho_{\alpha}(G)$,
	a contradiction.

	\case{2} {$|V(P)\cap V(\ThetA)|=2$.}
	We have $|E(P)\cup E(\ThetA)|=1$. $G\in \THETA_3(n, d)$ can be deduced from the proof of (3) of Proposition \ref{pro:extremal} and Lemma \ref{lem8}.
	By Lemma \ref{thm:s1}, we have $G\cong B_3^*(n,d)$.

	\case{3} {$|V(P_{d+1})\cap V(\ThetA)|=3$.}
	Applying Proposition \ref{pro:extremal}, we have $G\in \THETA_4(n, d)\cup \THETA_5(n, d)$. By Lemmas \ref{lema3} and \ref{lem15}, we have  further $G\cong B_5^*(n,d)$.

	Combining Cases 1--3, we have $G\cong B_3^*(n,d)$ or $G\cong B_5^*(n,d)$.
	This completes the proof.
\end{proof}
\begin{figure}[ht]
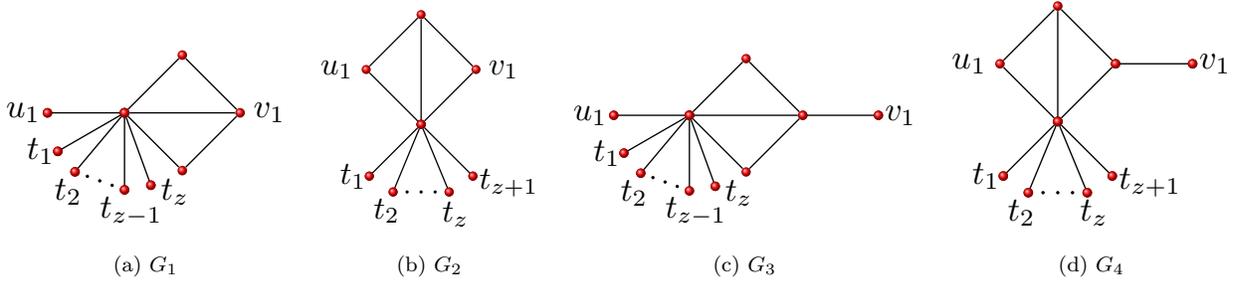
%---------------Doppelbild:Wärmestroeme--------
	\centering
	\subfloat[$G_1$]{\includegraphics[page=17,width=0.25\textwidth]{figure}}
	\subfloat[$G_2$]{\includegraphics[page=16,width=0.2\textwidth]{figure}}
	\subfloat[$G_3$]{\includegraphics[page=14,width=0.3\textwidth]{figure}}
	\subfloat[$G_4$]{\includegraphics[page=15,width=0.25\textwidth]{figure}}
	\caption{$G_i$ for $i=1,2,3,4$}\label{fig:figGi}
\end{figure}

% \begin{figureobject}{sub,
% 		label   = fig:figGi,
% 		caption = {$G_i$ for $i=1,2,3,4$},
% 	}
% 	\hspace*{-2cm}\includegraphicsubobject[page=17,  trim=-1cm 0 0 -15, caption = {$G_1$}, label = figG1]{
% 		figure
% 	} \hspace*{-3.4cm}
% 	\includegraphicsubobject[page=16, caption = {$G_2$}, label = figG2]{
% 		figure
% 	} \hspace*{-3.4cm}
% 	\includegraphicsubobject[page=14, caption = {$G_3$}, label = figG3]{
% 		figure
% 	} \hspace*{-3.4cm}
% 	\includegraphicsubobject[page=15, caption = {$G_4$}, label = figG4]{
% 		figure
% 	}
% \end{figureobject}

Take $n=16, d=9$ and $DR(\alpha)=\rho_{\alpha}(B_3^*(n,d))-\rho_{\alpha}(B_5^*(n,d))$.
By direct calculation, we obtain the value of $DR(\alpha)$ for $\alpha=0, 0.1,\cdots,0.8$.

\begin{table}
	\begin{adjustbox}{width=\textwidth,center}
		\tabcolsep=2pt 
		\extrarowsep=1mm
		\begin{tabu} to 1.3\textwidth {|X[0.8,$c]*9{|X[$c,m]}|}\tabucline -
		\alpha &	0 &	0.1	&	0.2	&	0.3	&	0.4	&	0.5	&	0.6	&	0.7	&	0.8	\\\tabucline -
		DR(\alpha) &	-0.00353	&	-0.0016	&	0.00053	&	0.00237	&	0.00327	&	0.00302	&	0.00207	&	0.00108	&	0.00042	\\\tabucline -
		\end{tabu}
\end{adjustbox}
\caption{ The differences of $\rho_{\alpha}(B_3^*(16,9))$ and $\rho_{\alpha}(B_5^*(16,9))$ for $\alpha=0, 0.1,\cdots,0.8$. }\label{Tab1}
\end{table}
From Table \ref{Tab1}, it can be seen that
the relative magnitudes of the $\alpha$-spectral radii of $B_3^*(n,d) $ and $B_5^*(n,d)$ depends upon the value of $\alpha$. 
	
	In \cite{MR2298999}, the graph $B_3^*(n,3)$ and $B_5^*(n,d)$ are denoted by $P^{\theta}_{4}(3)$  and $P^{+}_{d+1}(\lfloor \frac{d+2}{2} \rfloor)$, respectively. The following result is proved.
\begin{thm}[\cite{MR2298999}]Let $H$ be the graph with the maximal adjacency spectral radius among $\mathscr{B}(n,d)$, then 	 $H\cong B_3^*(n,d)$ for  $d=3$ and $H\cong B_5^*(n,d)$ for  $d \geq 4$. 
\end{thm}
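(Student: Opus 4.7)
The plan is to derive this theorem as an immediate corollary of Theorem \ref{thm1} combined with a direct comparison of the two candidate graphs at $\alpha = 0$. Specializing Theorem \ref{thm1} to $\alpha = 0$ shows that the extremal graph $H \in \mathscr{B}(n,d)$ must be isomorphic to either $B_3^*(n,d)$ or $B_5^*(n,d)$, so it suffices to determine which of the two has the larger adjacency spectral radius for each fixed $d$. The entire problem therefore reduces to analyzing the sign of $\rho(B_5^*(n,d)) - \rho(B_3^*(n,d))$.

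For the base case $d = 3$, I would verify directly that $\rho(B_3^*(n,3)) > \rho(B_5^*(n,3))$, where the two candidates are $\ThetA_3(n-6;1,1)$ and $\ThetA_5(n-5;1,0)$ respectively. Since both graphs have the core $\ThetA$ with small pendant attachments, I would compute $\phi(B_3^*(n,3), x)$ and $\phi(B_5^*(n,3), x)$ explicitly via Lemma \ref{lem1} (treating each graph as a coalescence of $\ThetA$ with a star or short path) and then apply Lemma \ref{lemz2}-type reasoning to compare their largest roots. The advantage of $d=3$ is that every attached pendant component has length at most one, so the comparison collapses to a low-degree polynomial identity in $x$ and $n$.

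For the case $d \geq 4$, I would establish the reverse inequality $\rho(B_3^*(n,d)) < \rho(B_5^*(n,d))$. The cleanest route is to write both graphs as coalescences of a common "core" with two pendant paths of lengths $\lceil (d-1)/2\rceil, \lfloor (d-1)/2\rfloor$ or $\lceil (d-2)/2\rceil, \lfloor (d-2)/2\rfloor$, and then compute the difference $\phi(B_5^*(n,d), x) - \phi(B_3^*(n,d), x)$ via Lemma \ref{lem:y2}. This expresses the difference in terms of the path polynomials $f_k(x)$ and $\phi_0(P_{k+1})$, whose comparisons are controlled by Lemma \ref{lem:pathrec}. An alternative route, closer in spirit to the body of the paper, is to exhibit a sequence of edge-shifting and 2-switching transformations (using Lemmas \ref{lem8}, \ref{lema1}, \ref{lem100}, as in the proof of Lemmas \ref{lema3} and \ref{lem15}) that converts $B_3^*(n,d)$ into $B_5^*(n,d)$ while strictly increasing the spectral radius at each step; the hypothesis $d \geq 4$ is precisely what guarantees enough room on the pendant paths to carry out these transformations without dropping below diameter $d$.

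The main obstacle is the sign analysis in the $d \geq 4$ case: one must show that a specific polynomial combination of $f_k(x)$ and $\phi_0(P_{k+1})$ has the right sign on the interval $[\max\{\rho(B_3^*),\rho(B_5^*-v)\},\infty)$. Table \ref{Tab1} confirms numerically (at $\alpha = 0, d = 9$) that $DR(0) < 0$, so the direction of the inequality we are targeting is correct, but the qualitative flip between $d = 3$ and $d \geq 4$ has to emerge structurally from the mismatch $d = a+b+1$ versus $d = a+b+2$ in the definitions of $B_3^*$ and $B_5^*$, together with Lemma \ref{lem:pathrec}. Isolating this sign flip and checking that it holds uniformly in $n$ is the delicate part of the argument.
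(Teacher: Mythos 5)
This statement is not proved in the paper at all: it is quoted from \cite{MR2298999}, and the authors use it only as a known benchmark (the paper's own contribution for this pair of graphs is the $\alpha=\tfrac12$ comparison in Theorem \ref{thm:a1}). So your attempt has to stand as a self-contained proof, and as written it does not. The reduction via Theorem \ref{thm1} at $\alpha=0$ is fine, but after that reduction the \emph{entire} content of the theorem is the comparison of $\rho(B_3^*(n,d))$ and $\rho(B_5^*(n,d))$ for the adjacency matrix, uniformly in $n$, with the flip between $d=3$ and $d\geq 4$. You explicitly defer this ("the delicate part"), offer no computation for $d=3$, and for $d\geq 4$ only point at Lemma \ref{lem:y2}/\ref{lem:pathrec} plus a single numerical data point ($n=16$, $d=9$) from Table \ref{Tab1}. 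That is a plan, not a proof; the analogue of what the paper actually does for $\alpha=\tfrac12$ (explicit $DF_1,DF_2,DF_3$ from the quotient-matrix computations in Appendix A, followed by a sign analysis of $DF(x)$ on $[\rho,\infty)$) would have to be carried out at $\alpha=0$, and nothing in the paper supplies it.

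Your proposed "alternative route" is moreover structurally unworkable: Lemmas \ref{lem8}, \ref{lema1} and \ref{lem100} hold for every $\alpha\in[0,1)$, so any chain of such transformations turning $B_3^*(n,d)$ into $B_5^*(n,d)$ with strictly increasing spectral radius at each step would give $\rho_\alpha(B_5^*)>\rho_\alpha(B_3^*)$ for \emph{all} $\alpha\in[0,1)$, contradicting Theorem \ref{thm:a1} (and Table \ref{Tab1}, where $DR(\alpha)$ changes sign near $\alpha\approx 0.15$). So the only viable path among those you sketch is the explicit characteristic-polynomial comparison at $\alpha=0$, and since the $d=3$ versus $d\geq 4$ dichotomy must emerge from that computation, the proof is genuinely incomplete without it.
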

In \cite[Theorem 5.1]{pai:thesis} and \cite[Theorem 3.1]{MR3643499}, the authors asserted that the above result also hold for signless Laplacian spectral radius. Unfortunately, this result is not correct.  By numerical calculation, one can find that: For $n=16$ and $d=9$ and $\alpha=\frac{1}{2}$, 
$\rho_{\alpha}(B_3^*(16,9)) \approx 4.6201$  and  $\rho_{\alpha}(B_5^*(16,9)) \approx 4.6171$. Since $\rho_Q(G)=2\rho_{1/2}(G)$ for any graph $G$, we have $\rho_{Q}(B_3^*(n,d)) > \rho_{Q}(B_5^*(n,d))$, which does not agree with the assertion of Theorem 3.1 of \cite{MR3643499}.

In the following theorem,  we determine the unique graph with the maximal $\alpha$-spectral radius among $\mathscr{B}(n,d)$.
\begin{thm}\label{thm:a1}Let $H$ be the graph with the maximal  signless Laplacian spectral radius among $\mathscr{B}(n,d)$, then 
	$H\cong B_3^*(n,d)$ for  $d\geq 3$.
   \end{thm}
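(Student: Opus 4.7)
By Theorem \ref{thm1}, the maximizer $H$ of the $\alpha$-spectral radius in $\mathscr{B}(n,d)$ is isomorphic to $B_3^*(n,d)$ or $B_5^*(n,d)$; since $\rho_Q(G)=2\rho_{1/2}(G)$, the theorem reduces to
\[
\rho_{1/2}(B_3^*(n,d))>\rho_{1/2}(B_5^*(n,d)) \qquad (d\ge 3).
\]
The plan is to establish this by a direct polynomial comparison. It suffices to prove $\phi_{1/2}(B_5^*(n,d),x)>\phi_{1/2}(B_3^*(n,d),x)$ for every $x\ge\rho_{1/2}(B_3^*(n,d))$: evaluating at $x=\rho_{1/2}(B_3^*(n,d))$ then gives $\phi_{1/2}(B_5^*(n,d),\rho_{1/2}(B_3^*(n,d)))>0$, and since $\phi_{1/2}(B_5^*(n,d),\cdot)$ is monic with largest real root $\rho_{1/2}(B_5^*(n,d))$, the strict inequality follows.

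To set up the comparison I would parametrize both graphs as rooted products over a common $\ThetA$-kernel. With $s=n-d-3$, $(a,b)=(\lceil(d-1)/2\rceil,\lfloor(d-1)/2\rfloor)$, and $(a',b')=(\lceil(d-2)/2\rceil,\lfloor(d-2)/2\rfloor)$, the graph $B_3^*(n,d)$ is obtained from $K:=\ThetA$ with $s$ pendant edges appended at $w_2$ by attaching pendant paths $P_{a+1},P_{b+1}$ at $w_2,w_4$, while $B_5^*(n,d)$ is obtained from $K$ with one extra pendant at $w_2$ by attaching pendant paths $P_{a'+1},P_{b'+1}$ at the equivalent vertices $w_1,w_3$. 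Applying Lemma \ref{lem:ch1} (or Lemma \ref{lem:y2} for a direct difference formula) expresses $\phi_{1/2}$ of each graph in terms of $\phi_{1/2}$ and $\psi_{1/2}$ of $K$ together with the path polynomials $f_k$ and $\phi_{1/2}(P_{k+1})$ from the recurrence \eqref{rec:path}; the $\psi_{1/2}$-values on $K$ at $w_1,w_2,w_3,w_4$ are in turn computed via the Schwenk-type formula in Theorem \ref{weightedcharpoly}. Forming $D(x):=\phi_{1/2}(B_5^*(n,d),x)-\phi_{1/2}(B_3^*(n,d),x)$, I would factor out the ``Hoffman'' quantity $\phi_{1/2}(K,x)-x\psi_{1/2}(K,v)$ (which is negative past $\rho_{1/2}(K)$ by Lemma \ref{lemz2}) and reduce the question to positivity of an explicit combination in which the Chebyshev-like identities $f_{k+1}f_l-f_kf_{l+1}\ge 0$ of Lemma \ref{lem:pathrec} are the main tool.

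The main obstacle is tracking signs uniformly in $d$ and its parity, since $(a-b)$ and $(a'-b')$ are independently $0$ or $1$ and four sub-cases must be checked. The reason $\alpha=1/2$ is the correct specialization is that the Schwenk expansion around $w_2$ (degree $3$) versus $w_1$ (degree $2$) produces a coefficient $\alpha(d_{w_2}-d_{w_1})=\alpha$ multiplying $\psi_{\alpha}(K,\{w_1,w_2\})$; this vanishes at $\alpha=0$ (consistent with $DR(0)<0$ in Table \ref{Tab1}) but at $\alpha=1/2$ combines with the other nonnegative terms, which are controlled by Lemma \ref{lem:pathrec} and by the positivity of the relevant $\psi_{1/2}(K,\cdot)$ for $x$ past the appropriate principal-submatrix spectral radius, to force $D(x)>0$ for $x\ge\rho_{1/2}(B_3^*(n,d))>2$. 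At $\alpha=1/2$ the recurrence \eqref{rec:path} becomes $f_t(x)=(x-1)f_{t-1}(x)-\tfrac{1}{4}f_{t-2}(x)$, so $f_k$ is (up to scaling) a Chebyshev polynomial of the second kind, and this structure collapses the remaining algebra enough to complete the verification in each parity sub-case, yielding $\rho_{1/2}(B_5^*(n,d))<\rho_{1/2}(B_3^*(n,d))$ as required.
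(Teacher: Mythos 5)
Your overall reduction (Theorem \ref{thm1} plus the claim $\rho_{1/2}(B_3^*(n,d))>\rho_{1/2}(B_5^*(n,d))$, translated into a characteristic-polynomial comparison via rooted-product formulas and the path identities of Lemma \ref{lem:pathrec}) is the same skeleton as the paper's proof, but the heart of the theorem is exactly that inequality, and your proposal does not actually establish it. The mechanism you offer for why $\alpha=\frac{1}{2}$ works --- the Schwenk term $\alpha(d_{w_2}-d_{w_1})\,\psi_{\alpha}(K,\{w_1,w_2\})$ vanishing at $\alpha=0$ but ``combining with the other nonnegative terms'' at $\alpha=\frac12$ --- cannot be turned into a proof: it is equally ``positive'' at $\alpha=0.1$, where Table \ref{Tab1} shows $\rho_{\alpha}(B_3^*(16,9))<\rho_{\alpha}(B_5^*(16,9))$ (and for the adjacency matrix $B_5^*$ is the genuine maximizer for $d\ge 4$). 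Moreover the comparison is not a single ``move the attachment from $w_1$ to $w_2$'' operation: $B_5^*(n,d)$ has one more pendant edge at $w_2$ than $B_3^*(n,d)$ and shorter attached paths, so no soft sign-bookkeeping with Lemma \ref{lem:pathrec} can decide it. The inequality at $\alpha=\frac12$ is quantitative, and the paper proves it by explicit computation: it builds four auxiliary kernels $G_1,\dots,G_4$, computes $\phi_{1/2}$ and the $\psi_{1/2}$'s of each by the quotient-matrix technique (Table \ref{table:charpolys}), extracts the common factor $x^2-\frac{z+5}{2}x+1$ from $DF_2,DF_3$ (and $DF_1$ in the odd case), uses $\rho_{1/2}>\frac{z+5}{2}$ coming from the star $K_{1,z+4}$, and verifies the resulting sign, with the case $l=2$ treated separately. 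Your proposal defers all of this to ``the structure collapses the remaining algebra'', which is the entire content of the proof.

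Two further technical gaps. First, Lemma \ref{lem:y2} requires the \emph{same} rooted graph attached at two vertices of each kernel; in your parametrization the attached paths have lengths $a\neq b$ and $a'\neq b'$ in general, so that lemma does not apply as you invoke it. The paper's key structural device is to choose the kernels $G_1,\dots,G_4$ so as to absorb the extra pendant vertex and the parity offset, making both $B_3^*(n,d)$ and $B_5^*(n,d)$ equal to a kernel with two \emph{equal} pendant paths $P_l$; only then does the clean difference formula \eqref{difchap} with $DF_1,DF_2,DF_3$ become available. Second, your logical reduction is stated incorrectly: knowing $\phi_{1/2}(B_5^*,\,\rho_{1/2}(B_3^*))>0$ and that $\phi_{1/2}(B_5^*,\cdot)$ is monic does not by itself give $\rho_{1/2}(B_5^*)<\rho_{1/2}(B_3^*)$, since a monic polynomial can be positive at points below its largest root. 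You need the inequality at the larger of the two radii; the clean version is the paper's: showing $\phi_{1/2}(B_3^*,x)<\phi_{1/2}(B_5^*,x)$ for all $x\ge\rho_{1/2}(B_5^*)$ forces $\phi_{1/2}(B_3^*,\cdot)$ to be negative at $\rho_{1/2}(B_5^*)$, hence $\rho_{1/2}(B_3^*)>\rho_{1/2}(B_5^*)$.
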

   \begin{proof}
	   For $i=1,2,3,4$, take $G_i$ be graphs depicted in Fig. \ref{fig:figGi}. \\
	   Let $H_i(a,b)=G_i(P_a,P_b)$ be the graph obtained from $G_i$ by attaching pendant paths $P_a$ and $P_b$ to $u_1$ and $v_1$, respectively.
	   Then
	   \begin{align*}
		   H_1(a,b) & \cong \ThetA_3(z;a,b-1),     &
		   H_2(a,b) & \cong \ThetA_5(z+1;a-1,b-1),   \\
		   H_3(a,b) & \cong \ThetA_3(z;a,b),       &
		   H_4(a,b) & \cong \ThetA_5(z+1;a-1,b).
	   \end{align*}
	   Notice that $H_i(1,1)=G_i$ for $i=1,2,3,4$.
	   Take $a=b=l\geq 1$.   We have  for   $d=2l$ and $n=2l+z+3$,
	   $H_1(a,b)  \in \THETA_3(n,d),  H_2(a,b)  \in \THETA_5(n,d)$
	   and  for   $d=2l+1$ and $n=2l+z+4$,
	   $H_3(a,b)  \in \THETA_3(n,d), H_4(a,b)  \in \THETA_5(n,d).$
	   By Lemmas   \ref{thm:s1} and \ref{lem15}, we know that
	   $$B_3^*(n,d)=   \begin{cases}
			   H_1(l,l), & \text{\hspace{-2mm} if }d=2l;    \\
			   H_3(l,l), & \text{\hspace{-2mm}  if }d=2l+1;
		   \end{cases}\text{ \quad and   \quad} B_5^*(n,d)=   \begin{cases}
			   H_2(l,l), & \text{\hspace{-2mm}  if }d=2l;    \\
			   H_4(l,l), & \text{\hspace{-2mm}  if }d=2l+1.
		   \end{cases}$$
   Since $K_{1,z+4}$ is a proper subgraph of $G_i$ for $i=1,2,3,4$, when $\alpha=\frac{1}{2}$,
   we have $$\rho_{\frac{1}{2}}(H_i(l,l))>\rho_{\frac{1}{2}}(K_{1,z+4})= \frac{z+5}{2}.$$

   According to Theorem \ref{thm1}, to complete the proof, it suffices to show that\\ 
   $\rho_{\frac{1}{2}}(B_3^*(n,d))>\rho_{\frac{1}{2}}(B_5^*(n,d))$ for $d\geq 3$.
   
   Thus, it is sufficient to show that 
   $\phi_{\frac{1}{2}}(B_3^*(n,d))<\phi_{\frac{1}{2}}(B_5^*(n,d))$ for $d\geq 3$.

	   For $l \geq 2$, take $dp_l(x)=\phi_{\alpha}(P_l)-xf_{l-1}$. By Lemma \ref{lem:y2}, for $l\geq 2$, we have
	   \begin{align}
		   DF(x)= & \phi_\alpha(B_3^*(n,d))-\phi_\alpha(B_5^*(n,d))\nonumber                \\
		   =      & DF_1(x)f_{l-1}^2(x)+DF_2(x)f_{l-1}(x)dp_l(x)+ DF_3 dp_l(x)^2. \label{difchap}
	   \end{align}

	   For $i=1,2,3,4$, denote
	   $F_{i1}=\phi_{\alpha}(G_i),  F_{i2}=\psi_{\alpha}(G_i,u_1) $, $F_{i3}=\psi_{\alpha}(G_i,v_1) , $\\
	     $ F_{i4}=\psi_{\alpha}(G_i,\{u_1,v_1\}) $.
	   Applying the technique of quotient matrix (see e.g., Corollary 1 of \cite{saravananGeneralizationFiedlerLemma2021}),
	   $F_{ij}$ for $i,j=1,2,3,4$ can be obtained. 
	   
	   Take $f_{ij}$ are polynomials which are listed in Appendix A. Then for $1\leq j \leq 4$, we have
	   $$\begin{aligned}
		   &F_{1j}=(x-\frac{1}{2})^{z-1} f_{1j}, &
	   F_{2j}=&(x-\frac{1}{2})^{z-1} f_{2j},\\
	   &F_{3j}=(x-\frac{1}{2})^{z-1} (x-1)f_{3j},&
	   F_{4j}=&(x-\frac{1}{2})^{z}f_{4j}.\end{aligned}$$

	   \case {1} {$d\equiv 0\pmod {2}$.}
	   Setting $d=2l$, we have $DF(x)=H_1(l,l)-H_2(l,l)$.
	   Since $G_1 \cong G_2$, we have $DF_1(x)=F_{11}-F_{21}=0$. 
	   According to Table \ref{table:charpolys}, by directly calculation,  we have
	   $$\begin{aligned}
		   &DF_2(x)=\frac{1}{2}(x^2 - \frac{z+5}{2}x + 1)(x - \frac{1}{2})^{z} \text{   and   }&
	   DF_3(x)=	\frac{1}{4}(x^2 - \frac{z+5}{2}x + 1)(x - \frac{1}{2})^{z-1}.
	   \end{aligned}
	   $$
	   Then, by Formula \eqref{difchap}, we have
	   $DF(x)=\left((2x-1)f_{l-1}(x)+dp_{l}(x)\right)dp_{l}(x)DF_3(x).$\\
	   Since $\rho_{\frac{1}{2}}(H_2(l,l))>\frac{z+5}{2}>\rho_{\frac{z+5}{2}}(P_{i})(i\ge 2)$, we have $f_{i}(x)>0(i\ge 2)$\\
	    when $x \geq \rho_{\frac{1}{2}}(H_2(l,l))$.
   It is easy to see that\\	
		   $(2x-1)f_{l-1}(x)+dp_{l}(x)=((x-1)f_{l-1}(x)+\phi_{\frac{1}{2}}(P_l))>0$ when $x>2$.\\[2pt]
	   Since $DF_3(x)>0$ and $dp_{l}<0$ when $x>\frac{z+5}{2}$, we have\\
   $DF(x)=H_1(l,l)-H_2(l,l))<0$ when $x \geq \rho_{\frac{1}{2}}(H_2(l,l))$.  \\
	   Thus $\rho_{\frac{1}{2}}(H_1(l,l))>\rho_{\frac{1}{2}}(H_2(l,l))$ follows for this case.

   \case {2} {$d\equiv 1\pmod {2}$.}
   Setting $d=2l+1$, we have $DF(x)=H_3(l,l)-H_4(l,l)$.\\
		   According to Table \ref{table:charpolys}, by directly calculation,  we have
   \begin{align*}
	   DF_1(x)=& -\frac{1}{4}x(x^2-\frac{z+5}{2}x+1)(x-\frac{1}{2})^{z+1},\\
	   DF_2(x)=& 
   -\frac{1}{2}(x^2-\frac{z+5}{2}x+1)(x^{3} - 2x^{2} + \frac{3}{2}x - \frac{1}{4})(x-\frac{1}{2})^{z-1},\\
   DF_3(x)=& -\frac{1}{2}(x^2-\frac{z+5}{2}x+1)(x-\frac{1}{2})^{z-1}(x - 1)^{2}.
   \end{align*}
	   % $$
   % \end{equation}
   Take $g(x)=-\frac{1}{8}(x^2-\frac{z+5}{2}x+1)(x-\frac{1}{2})^{z-1}.$ \\
   For $\alpha=\frac{1}{2}$, 
   since $dp_l(x)=-\frac{1}{2}f_{l-1}(x)-\frac{1}{4}f_{l-2}(x), f_{l-1}(x)=(x-1)f_{l-2}(x)-\frac{1}{4}f_{l-3}(x)$ for $l\geq 3$ and \\ $g(x)<0, f_{l-2}(x)>f_{l-3}(x)$,    $2 x^{3} - \frac{9}{2} x^{2} + \frac{5}{2} x - \frac{1}{4}>\frac{3}{4} x^{2} -\frac{9}{8} x + \frac{3}{8}$ hold when $x>\frac{z+5}{2}$,\\ by Formula \eqref{difchap}, we have
   $$\begin{aligned}
	   DF(x)=&g(x)f_{l-1}(x)\pmb{\big(}3(x-1)(x-\frac{1}{2})f_{l-1}(x)-(x^{3}-3x^{2}+\frac{7}{2}x-\frac{5}{4})f_{l-2}(x)\pmb{\big)}\\
	   &+\frac{1}{4}(x-1)^2f_{l-2}^{2}(x)g(x)\\
	   <&g(x)f_{l-1}(x)\pmb{\big(}3(x-1)(x-\frac{1}{2})f_{l-1}(x)-(x^{3}-3x^{2}+\frac{7}{2}x-\frac{5}{4})f_{l-2}(x)\pmb{\big)}\\
	   =&g(x)f_{l-1}(x)\pmb{\big(}(2x^{3}-\frac{9}{2}x^{2}+\frac{5}{2}x-\frac{1}{4})f_{l-2}(x)-(\frac{3}{4}x^{2}-\frac{9}{8}x+\frac{3}{8})f_{l-3}(x)\pmb{\big)}<0
   \end{aligned}$$

   holds, when $x \geq \rho_{\frac{1}{2}}(H_2(l,l))$ and $l\geq 3$.
   
   When $l=2$ and $x>\frac{z+5}{2}$, the following holds:
   $$\begin{aligned}
   &3 (x - 1) (x - \frac{1}{2}) f_{l-1}(x)- (x^{3} - 3 x^{2} + \frac{7}{2} x - \frac{5}{4})f_{l-2}(x)
   =2x^3-3x^2+\frac{1}{4}x+\frac{1}{2}>0.
   \end{aligned}$$
   Then $DF(x)<0$ is also true  when $x \geq \rho_{\frac{1}{2}}(H_2(l,l))$ and $l=2$.
	   Thus, $\rho_{\frac{1}{2}}(H_3(l,l))>\rho_{\frac{1}{2}}(H_4(l,l))$ follows for this case.
	   By Theorem \ref{thm1}, we have $B_3^*(n,d)$ with the maximal signless Laplacian spectral radius among $\mathscr{B}(n,d)$. This completes the proof.
   \end{proof}

   Motivated by Theorems \ref{thm1}, \ref{thm:a1} and results of numerical calculation,   
   we conclude this paper with the following conjecture.
   \begin{con}
	   When $\frac{1}{2}<\alpha<1$, let $G$ be the graph with the maximal $\alpha$-spectral radius among $\mathscr{B}(n,d)$, then we have
	   $G\cong B_3^*(n,d)$.
   \end{con}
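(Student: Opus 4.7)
The plan is to extend the argument of Theorem \ref{thm:a1} from the specific value $\alpha = \tfrac12$ to every $\alpha \in (\tfrac12, 1)$. By Theorem \ref{thm1}, the extremal graph in $\mathscr{B}(n,d)$ is isomorphic to either $B_3^*(n,d)$ or $B_5^*(n,d)$, so it suffices to prove $\rho_\alpha(B_3^*(n,d)) > \rho_\alpha(B_5^*(n,d))$ for all $d \geq 3$. Since $\phi_\alpha$ has positive leading coefficient and $\rho_\alpha$ is its largest root, this reduces to showing that the polynomial $DF(x) := \phi_\alpha(B_3^*(n,d)) - \phi_\alpha(B_5^*(n,d))$ is strictly negative at $x = \rho_\alpha(B_5^*(n,d))$.

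Following Theorem \ref{thm:a1}, I would realize both graphs as rooted products $H_i(l,l)$ of a small fixed base $G_i$ (depending only on $z = n-d-3$) with two equal pendant paths of length $l = \lfloor d/2 \rfloor$. Applying Lemma \ref{lem:y2} gives
$$DF(x) = DF_1(x)\, f_{l-1}^2 + DF_2(x)\, f_{l-1}\, dp_l + DF_3(x)\, dp_l^2, \qquad dp_l := \phi_\alpha(P_l) - x f_{l-1}.$$
The three coefficients $DF_j$ are then extracted from the quotient matrices of the $G_i$: the $z$ pendant edges at the high-degree vertex $w_2$ contribute a universal factor $(x-\alpha)^{z-1}$, and the remainder is a polynomial of bounded degree with $\alpha$ entering as a parameter. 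In the even case $d = 2l$, the isomorphism $G_1 \cong G_2$ forces $DF_1 \equiv 0$ for every $\alpha$, and $DF$ collapses to a product whose sign can be read off once the $\alpha$-generalizations of the factored polynomials $DF_2$ and $DF_3$ that appeared at $\alpha = \tfrac12$ are written down.

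The principal obstacle is the odd case $d = 2l+1$, where $G_3 \not\cong G_4$, $DF_1 \not\equiv 0$, and three competing terms must be controlled simultaneously at $x = \rho_\alpha(B_5^*(n,d))$. At $\alpha = \tfrac12$ the specific recursion $f_{l-1}(x) = (x-1) f_{l-2}(x) - \tfrac14 f_{l-3}(x)$ (and the analogous identity for $dp_l$) allowed the three terms to be combined into a cubic-versus-quadratic polynomial inequality in $x$; for general $\alpha$ these recursions acquire $\alpha$-dependent coefficients, and one must verify the corresponding inequality uniformly in $l$ and in $\alpha \in (\tfrac12, 1)$. An alternative perturbative route is to use Theorem \ref{thm:a1} as the base case and track $\tfrac{\partial}{\partial \alpha}\bigl(\rho_\alpha(B_3^*) - \rho_\alpha(B_5^*)\bigr)$ on $(\tfrac12, 1)$: by the eigenvalue-perturbation formula this derivative equals $\sum_{uv \in E(B_3^*)} (x_u - x_v)^2 - \sum_{uv \in E(B_5^*)} (y_u - y_v)^2$, where $\boldsymbol{x}, \boldsymbol{y}$ are the corresponding unit $\alpha$-Perron vectors. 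Making such a Perron-vector comparison tight enough to give the required monotonicity for every $n$ and $d$ is itself delicate and constitutes the hard part of either strategy.
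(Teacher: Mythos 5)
You should first note that the statement you are proving is posed in the paper as a \emph{conjecture}: the paper offers no proof of it for any $\alpha\in(\tfrac12,1)$, only the case $\alpha=\tfrac12$ (Theorem \ref{thm:a1}) together with numerical evidence (Table \ref{Tab1}), so there is no paper proof to compare your argument against. Judged on its own terms, your proposal is a plan rather than a proof. The reduction via Theorem \ref{thm1} to the single inequality $\rho_\alpha(B_3^*(n,d))>\rho_\alpha(B_5^*(n,d))$, and the decomposition of $DF(x)$ through Lemma \ref{lem:y2} with equal pendant paths, do faithfully mirror the $\alpha=\tfrac12$ argument; but every step where the sign of $DF$ is actually decided is left undone. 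In the even case you assert the sign ``can be read off once the $\alpha$-generalizations \dots are written down,'' yet for general $\alpha$ the factor $(x^2-\tfrac{z+5}{2}x+1)$ and the lower bound $\rho_{1/2}>\tfrac{z+5}{2}$ (which at $\alpha=\tfrac12$ came from the exact value $\rho_{1/2}(K_{1,z+4})=\tfrac{z+5}{2}$) both become $\alpha$-dependent, and you neither compute the new factors nor prove the analogous spectral-radius bound that makes them positive at $x\ge\rho_\alpha(B_5^*(n,d))$; nothing guarantees the clean factorizations of $DF_2,DF_3$ survive away from $\alpha=\tfrac12$. In the odd case you explicitly concede that the three-term estimate is ``the hard part,'' and the perturbative alternative is only named: the formula for $\frac{\partial}{\partial\alpha}\rho_\alpha$ gives a difference of two sums over different graphs with different Perron vectors, and you give no mechanism for comparing them, so no monotonicity in $\alpha$ is established.

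In short, there is a genuine gap: the decisive inequalities for $\alpha\in(\tfrac12,1)$ are never proved, so the argument does not settle the conjecture; the paper's Table \ref{Tab1} already shows the comparison between $B_3^*(n,d)$ and $B_5^*(n,d)$ is genuinely $\alpha$-sensitive (the sign of $DR(\alpha)$ flips for small $\alpha$), which is precisely why a uniform-in-$\alpha$, uniform-in-$(n,d)$ estimate is needed and why the authors left the statement as an open problem. If you wish to pursue it, carrying out your first strategy would require recomputing the quantities of Table \ref{table:charpolys} with $\alpha$ as a parameter, proving an $\alpha$-analogue of the bound $\rho_\alpha>\rho_\alpha(K_{1,z+4})$ sufficient to fix the sign of the quadratic factor, and then establishing the odd-case polynomial inequality uniformly in $l$ and $\alpha$ --- none of which is yet on the page.
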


\section*{Conflict of interest}

The authors declare that they have no conflict of interest.

   \newpage
   \appendix
   \section{The expressions of $f_{ij}$ for $i,j=1,2,3,4$}
   \begin{table}[ht]
	\caption{The expressions of $f_{ij}$ for $i,j=1,2,3,4$.}\label{table:charpolys}% 
	\begin{adjustbox}{width=1\textwidth,rotate=0,center,scale=0.95}
		\tabulinesep =1mm
		\begin{tabu} to 1.05\textwidth {|X[1,l]|}\tabucline -
$
\begin{aligned}
 f_{11}=f_{12}=&\pmb{\big(}x^{3} - \frac{1}{2}(z+9) x^{2} + (z + 5) x - 1\pmb{\big)}(x - \frac{1}{2})^2(x - 1))\\
 f_{12}=&\pmb{\big(}x^{3} - \frac{1}{2}(z+9) x^{2} + (z + \frac{21}{4}) x - \frac{3}{2}\pmb{\big)}(x - \frac{1}{2})(x - 1)\\
 f_{13}=&\pmb{\big(}x^{3} - \frac{1}{2}(z+7) x^{2} + \frac{1}{2} (z + \frac{11}{2}) x - \frac{1}{2}\pmb{\big)}(x - \frac{1}{2})(x - 1)\\
 f_{14}=&\pmb{\big(}x^{3} -\frac{1}{2}(z+7)x^{2} + \frac{1}{2}( z + 6) x - \frac{3}{4}\pmb{\big)}(x - 1)\\
 \end{aligned}$\\\tabucline -
$\begin{aligned}
 f_{22}=f_{23}=&\pmb{\big(}x^{4} - (\frac{1}{2} z + 5) x^{3} + (\frac{5}{4} z + \frac{31}{4}) x^{2} - (\frac{5}{8} z + \frac{35}{8}) x + \frac{3}{4}\pmb{\big)}(x - \frac{1}{2})\\
 f_{24}=&\pmb{\big(}x^{3} - (\frac{1}{2} z + 4) x^{2} + (\frac{3}{4} z + \frac{17}{4}) x - 1\pmb{\big)}(x - \frac{1}{2})   \end{aligned}$  \\\tabucline -
$\begin{aligned}
 f_{31}=&\pmb{\big(}x^{5} - \frac{1}{2}( z + 12) x^{4} + (\frac{7}{4} z + \frac{23}{2}) x^{3} - (\frac{11}{8} z + \frac{17}{2}) x^{2} + (\frac{1}{4} z + \frac{5}{2}) x - \frac{1}{4}\pmb{\big)}(x - \frac{1}{2}) \\
f_{32}=&x^{5} - \frac{1}{2}( z + 12) x^{4} + (\frac{7}{4} z + \frac{47}{4}) x^{3} - (\frac{11}{8} z + \frac{75}{8}) x^{2} + (\frac{1}{4} z + \frac{51}{16}) x - \frac{3}{8}\\
f_{33}=&\pmb{\big(}x^{4} -\frac{1}{2} ( z + 11) x^{3} + (\frac{3}{2} z + 9) x^{2} - (\frac{3}{4} z + \frac{39}{8}) x + \frac{3}{4}\pmb{\big)}(x - \frac{1}{2})\\
f_{34}=&x^{4} - \frac{1}{2}( z + 11) x^{3} + (\frac{3}{2} z + \frac{37}{4}) x^{2} - (\frac{3}{4} z + \frac{45}{8}) x + \frac{9}{8}	\end{aligned}$ \\\tabucline -
 $\begin{aligned}
	 f_{41}=&x^{6} - (\frac{1}{2} z + 7) x^{5} + (\frac{9}{4} z + \frac{71}{4}) x^{4} - (\frac{13}{4} z + \frac{83}{4}) x^{3} + (\frac{27}{16} z + \frac{185}{16}) x^{2} -(\frac{1}{4} z + \frac{23}{8}) x + \frac{1}{4}\\
	 f_{42}=&x^{5} - (\frac{1}{2} z + 6) x^{4} + (\frac{7}{4} z + \frac{49}{4}) x^{3} - (\frac{13}{8} z + \frac{83}{8}) x^{2} + (\frac{5}{16} z + \frac{55}{16}) x - \frac{3}{8}\\
	 f_{43}=&x^{5} - (\frac{1}{2} z + \frac{13}{2}) x^{4} + (2 z + \frac{59}{4}) x^{3} - (\frac{19}{8} z + \frac{117}{8}) x^{2} + (\frac{13}{16} z + \frac{99}{16}) x - \frac{7}{8}\\
	 f_{44}=&\pmb{\big(}x^{3} - (\frac{1}{2} z + \frac{9}{2}) x^{2} + (z + \frac{21}{4}) x - \frac{5}{4}\pmb{\big)}(x - 1)\\
	 \end{aligned}$\\\tabucline -
		\end{tabu}
	\end{adjustbox}
\end{table}   
   \end{document}